\numberwithin{equation}{section}
\theoremstyle{plain}
\newtheorem{theorem}[equation]{Theorem}
\newtheorem{proposition}[equation]{Proposition}
\newtheorem{lemma}[equation]{Lemma} 
\newtheorem{corollary}[equation]{Corollary}
\theoremstyle{definition}
\newtheorem{example}[equation]{Example}
\newtheorem{chunk}[equation]{}
\theoremstyle{remark}
\newtheorem{remark}[equation]{Remark}
\newcommand{\add}{\operatorname{add}}
\newcommand{\cat}[1]{\mathcal{#1}}
\newcommand{\Coker}{\operatorname{Coker}}
\newcommand{\bfD}{\mathbf{D}}
\newcommand{\dbcat}[1]{\mathbf{D}^{\mathrm{b}}(\operatorname{mod} #1)}
\newcommand{\dcat}[1]{\mathbf{D}(\operatorname{Mod} #1)}
\newcommand{\Ext}{\operatorname{Ext}}
\newcommand{\fibre}[2]{ {#1}_{k(#2)}}
\newcommand{\Gproj}{\operatorname{Gproj}}
\newcommand{\uGproj}{\operatorname{\underline{Gproj}}}
\newcommand{\GProj}{\operatorname{GProj}}
\newcommand{\uGProj}{\operatorname{\underline{GProj}}}
\newcommand{\hh}[1]{H^{*}(#1)}
\newcommand{\Hom}{\operatorname{Hom}}
\newcommand{\fHom}{\operatorname{\mathcal{H}\!\!\;\mathit{om}}}
\newcommand{\id}{\operatorname{id}}
\newcommand{\Inj}{\operatorname{Inj}}
\newcommand{\injdim}{\operatorname{inj{.}dim}}
\newcommand{\iso}{\xrightarrow{\raisebox{-.4ex}[0ex][0ex]{$\scriptstyle{\sim}$}}}
\newcommand{\longiso}{\xrightarrow{\ \raisebox{-.4ex}[0ex][0ex]{$\scriptstyle{\sim}$}\ }}
\newcommand{\Ker}{\operatorname{Ker}}
\newcommand{\bfK}{\mathbf{K}}
\newcommand{\KInj}[1]{\mathbf{K}(\Inj #1)}
\newcommand{\KMod}[1]{\mathbf {K}(\Mod #1)}
\newcommand{\kos}[2]{{#1}/\!\!/{#2}} 
\newcommand{\pres}{\mathbf{p}}
\newcommand{\Prj}{\operatorname{Proj}}
\newcommand{\KacProj}[1]{\mathbf{K}_{\mathrm{ac}}(\Prj #1)}
\newcommand{\KProj}[1]{\mathbf{K}(\Prj #1)}
\newcommand{\KProjc}[1]{\mathbf{K}^{\mathrm c}(\Prj #1)}
\newcommand{\kproj}[1]{\mathbf{K}_{\mathrm{proj}}(#1)}
\newcommand{\Loc}{\operatorname{Loc}}
\newcommand{\rmod}{\operatorname{mod}}
\newcommand{\Mod}{\operatorname{Mod}}
\newcommand{\one}{\mathds 1}
\newcommand{\projdim}{\operatorname{proj{.}dim}}
\newcommand{\RHom}{\operatorname{RHom}}
\newcommand{\Spec}{\operatorname{Spec}}
\newcommand{\stmod}{\operatorname{stmod}}
\newcommand{\supp}{\operatorname{supp}}
\newcommand{\Thick}{\operatorname{Thick}}
\newcommand{\tors}{\operatorname{tors}}
\newcommand{\lotimes}{\otimes^{\mathrm L}}
\newcommand{\op}[1]{{#1}^{\mathrm{op}}}
\newcommand{\lra}{\longrightarrow}
\newcommand{\xra}{\xrightarrow}
\newcommand{\bfj}{\mathbf{j}}
\newcommand{\bfq}{\mathbf{q}}
\newcommand{\bft}{\mathbf{t}}
 \newcommand{\bbZ}{\mathbb Z} 
\newcommand{\fm}{\mathfrak{m}} 
\newcommand{\fp}{\mathfrak{p}}
\newcommand{\fq}{\mathfrak{q}} 
\newcommand{\eps}{\varepsilon}
\newcommand{\gam}{\varGamma} 
\title[Fibrewise stratification]{Fibrewise stratification of group representations}
\author[Benson, Iyengar, Krause, and Pevtsova]{Dave Benson, Srikanth  B. Iyengar, Henning Krause, and Julia Pevtsova}
\address{Dave Benson \\ 
Institute of Mathematics\\ 
University of Aberdeen\\ 
King's College\\ 
Aberdeen AB24 3UE\\ 
Scotland U.K.}
\address{Srikanth B. Iyengar\\ 
Department of Mathematics\\
University of Utah\\ 
Salt Lake City, UT 84112\\ 
U.S.A.}
\address{Henning Krause\\ 
Fakult\"at f\"ur Mathematik\\ 
Universit\"at Bielefeld\\ 
33501 Bielefeld\\ 
Germany.}
\address{Julia Pevtsova\\ 
Department of Mathematics\\ 
University of Washington\\ 
Seattle, WA 98195\\ 
U.S.A.}
\begin{document}

\begin{abstract} 
  Given a finite cocommutative Hopf algebra $A$ over a commutative
  regular ring $R$, the  lattice of localising tensor ideals of the stable category of
  Gorenstein projective $A$-modules is described in terms of the corresponding lattices for the fibres of $A$ over the spectrum of $R$.
  Under certain natural conditions on the cohomology of $A$ over $R$, this yields a stratification of the stable category.
    These results apply when $A$ is the group algebra over $R$ of a finite
  group, and also when $A$ is the exterior algebra on a finite free
  $R$-module.
\end{abstract}

\keywords{Cocommutative Hopf algebra, group algebra, integral
  representation, stratification, stable module category}

\subjclass[2020]{16G30 (primary); 18G80, 20C10, 20J06 (secondary)}

\date{\today}

\thanks{SBI was partly supported by NSF grant DMS-2001368, and JP was partly supported by NSF grant DMS-1901854 and the  Brian and Tiffinie Pang faculty fellowship.}

\maketitle

\setcounter{tocdepth}{1}
\tableofcontents

\section{Introduction}

Following the seminal work of Hopkins~\cite{Hopkins:1987a} and Neeman~\cite{Neeman:1992a} in stable homotopy theory and commutative algebra,  much attention has been paid in the past few decades to the problem of classifying the thick subcategories of finite dimensional representations over various families of algebras, and also of the localising subcategories of all representations.  In terms of the language and machinery developed in~\cite{Benson/Iyengar/Krause:2008a,Benson/Iyengar/Krause:2011a},  the goal is to prove stratification theorems. For example, in the case of modular representations of a finite group, the thick tensor ideal subcategories of the small stable module category were classified in~\cite{Benson/Carlson/Rickard:1996a}, while the tensor ideal localising subcategories of the large stable module category were classified in~\cite{Benson/Iyengar/Krause:2011b}. These results were generalised to cover all finite group schemes over fields in \cite{Benson/Iyengar/Krause/Pevtsova:2018a}.
 
In this paper we address the problem of change of coefficients, with a
focus on representations of group algebras of finite group schemes,
and in particular, of finite groups. Let  $A$ be the group algebra of
a finite group scheme over a commutative noetherian ring $R$; in other
words, $A$ is a finite cocommutative Hopf algebra over $R$.  For
example, $A$ could be the group algebra $RG$ of a finite group
$G$. The appropriate analogue of the stable category of finite
dimensional representations over a finite group, or group scheme, is
the singularity category of $A$, in the sense of
Buchweitz~\cite{Buchweitz:2021a} and Orlov~\cite{Orlov:2004a}. By a
result of Buchweitz, this is equivalent to the stable category of
Gorenstein projective $A$-modules:
\[
\uGproj A\longiso\bfD_{\mathrm{sg}}(A)\colonequals\dbcat A/\bfD^{\mathrm{perf}}(A)\,.
\]
An $A$-module is said to be Gorenstein projective if it occurs as a syzygy in an acyclic complex of projective $A$-modules. The Gorenstein projective modules in $\rmod A$  form a Frobenius category, with projective-injective objects the  projective $A$-modules; $\uGproj A$ is the corresponding stable category. We have also to consider all Gorenstein projectives, not only the finitely generated ones, and the corresponding stable category $\uGProj A$. The category $\uGProj A$ is triangulated and compactly generated; the subcategory of compact objects is equivalent to $\uGproj A$. 

From now on assume that $R$ is regular, for example, $R=\bbZ$. In this
case, a finitely generated $A$-module is Gorenstein projective
precisely when it is projective when viewed as an $R$-module. The same
holds also for infinitely generated $A$-modules when in addition
$\dim R$ is finite, that is to say, when $R$ has finite global
dimension; see Lemma~\ref{le:symmetric}. Since $A$ is a Hopf
algebra over $R$, the tensor product over $R$ induces on $\uGproj A$
the structure of a tensor-triangulated category and also on
$\uGProj A$. Moreover $\uGProj A$ is rigidly compactly generated. Our goal is to classify the thick tensor ideals of
$\uGproj A$ and the localising tensor ideals of $\uGProj A$.

One approach would be to extend methods developed in the case where $R$ is a field to cover more general coefficient rings. In this work,
we take a different tack, by viewing $A$ as a family of Hopf algebras parameterised by $\Spec R$, the Zariski spectrum of $R$. The fiber
over each point $\fp$ in $\Spec R$ is the finite dimensional Hopf algebra $\fibre A{\fp}\colonequals A\otimes_R k(\fp)$, where $k(\fp)$
is the residue field at $\fp$. The results of \cite{Benson/Iyengar/Krause/Pevtsova:2018a} apply to yield a stratification of $\uGProj  \fibre A{\fp}$ in terms of  the projective spectrum of the cohomology ring of $\fibre A{\fp}$. Then the task becomes one of `patching' these local stratifications to obtain a global stratification of $\uGProj A$ in terms of the projective spectrum of the cohomology ring of $A$.

There are two aspects to this task: one representation theoretic and
the other purely cohomological. The former is completely solved by the
result below that can be viewed as a fibrewise criterion for detecting
membership in localising tensor ideals. We deduce it from  Theorem~\ref{th:lg-kproj-ha} that deals with the full homotopy category of projective $A$-modules.

\begin{theorem}
\label{ith:lg-Gproj-ha}
Let $R$ be a regular ring, $A$ a finite cocommutative Hopf $R$-algebra, and $M,N$ Gorenstein projective $A$-modules. The  conditions below are equivalent.  
\begin{enumerate}[\quad\rm(1)]
\item
 $M\in  \Loc^{\otimes}(N)$ in $\uGProj A$;
\item
 $\fibre M\fp\in \Loc^{\otimes}(\fibre N{\fp})$ in $\uGProj \fibre A{\fp}$ for each $\fp\in \Spec R$. 
\end{enumerate}
\end{theorem}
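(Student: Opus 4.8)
The implication (1)$\Rightarrow$(2) is formal. Since $R$ is regular (of finite Krull dimension in the non-finitely generated case), Lemma~\ref{le:symmetric} identifies the Gorenstein projective $A$-modules with those that are projective over $R$; in particular $M$ and $N$ are $R$-flat, and every short exact sequence of Gorenstein projective modules is $R$-split, hence stays exact after applying $-\otimes_R k(\fp)$. Moreover $\fibre{A}{\fp}$ is a finite-dimensional Hopf algebra over a field, hence self-injective, so all its modules are Gorenstein projective. Thus $-\otimes_R k(\fp)$ carries projectives to projectives and Gorenstein projectives to Gorenstein projectives, and induces a coproduct-preserving triangulated functor $\uGProj A\to\uGProj\fibre{A}{\fp}$; since base change along $R\to k(\fp)$ is symmetric monoidal for the Hopf-algebra tensor products, this functor is tensor-triangulated. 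Consequently $\{X\in\uGProj A : \fibre{X}{\fp}\in\Loc^{\otimes}(\fibre{N}{\fp})\}$ is a localising tensor ideal containing $N$, hence contains $\Loc^{\otimes}(N)$, which is (1)$\Rightarrow$(2).

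For the converse I would follow the route indicated in the paper and pass to the homotopy category of projectives. By a Buchweitz-type equivalence, valid because $A$ is Gorenstein over $R$ (so that acyclic complexes of projectives are totally acyclic), $\uGProj A$ is equivalent as a tensor-triangulated category to the subcategory $\KacProj A$ of acyclic complexes inside $\KProj A$, which is a localising tensor ideal of $\KProj A$; concretely, a Gorenstein projective $M$ corresponds to a complete resolution $T_M$, a totally acyclic complex of projective $A$-modules with a cocycle equal to $M$. This assignment commutes with base change: $T_M$ has $R$-projective terms and $R$-projective cocycles, hence is $R$-pure-acyclic, so $T_M\otimes_R k(\fp)$ is again acyclic and is a complete resolution of $\fibre{M}{\fp}$ over $\fibre{A}{\fp}$ (and similarly for tensor products of complete resolutions, so the monoidal structures match). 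Therefore, rephrased through these equivalences, hypothesis (2) says exactly that $(T_M)_{k(\fp)}\in\Loc^{\otimes}((T_N)_{k(\fp)})$ in $\KacProj\fibre{A}{\fp}$, hence a fortiori in $\KProj\fibre{A}{\fp}$, for every $\fp\in\Spec R$.

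At this point I would invoke Theorem~\ref{th:lg-kproj-ha}, the fibrewise detection criterion for the full homotopy category of projectives, to conclude $T_M\in\Loc^{\otimes}_{\KProj A}(T_N)$. It then remains to descend this membership to $\KacProj A$: because $\KacProj A$ is a localising tensor ideal of $\KProj A$, each generator $C\otimes_R T_N$ ($C\in\KProj A$) of $\Loc^{\otimes}_{\KProj A}(T_N)$ already lies in $\KacProj A$, and the associated tensor-colocalisation functor $\gamma$ (with $\gamma$ of the unit an idempotent object generating $\KacProj A$) satisfies $C\otimes_R T_N\cong\gamma C\otimes_R T_N$ with $\gamma C\in\KacProj A$; hence $\Loc^{\otimes}_{\KProj A}(T_N)=\Loc^{\otimes}_{\KacProj A}(T_N)$. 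Transporting back along $\KacProj A\iso\uGProj A$ yields $M\in\Loc^{\otimes}(N)$, completing (2)$\Rightarrow$(1).

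The genuinely hard input is Theorem~\ref{th:lg-kproj-ha} itself; within its proof the crux is to identify the fibre $\KProj\fibre{A}{\fp}$ with the $\fp$-local, $\fp$-torsion layer of $\KProj A$ (using that $\KProj A$ is rigidly compactly generated with $R$ acting centrally), to check that this identification is compatible with the tensor structure and detects vanishing, and to control the fact that $-\otimes_R k(\fp)$ is not exact yet behaves like the derived base change $-\lotimes_R k(\fp)$ on complexes of projectives; the local-to-global principle of Benson--Iyengar--Krause, available since $\Spec R$ is Noetherian, then assembles the fibrewise information. By comparison, the ingredients used in the deduction above --- the base-change formalism, the dictionary $\uGProj A\simeq\KacProj A$, and the smashing-ideal descent --- are routine, the only points needing care being the compatibility of complete resolutions with base change and the behaviour of the monoidal structure under that dictionary.
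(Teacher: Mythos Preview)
Your approach is correct and matches the paper's route: deduce the statement from Theorem~\ref{th:lg-kproj-ha} via the tensor-triangulated equivalence $\uGProj A\simeq\KacProj A$ (Theorem~\ref{th:Buchweitz-big} together with Lemma~\ref{le:GProj-Kac}) and the fibrewise compatibility of complete resolutions (Lemma~\ref{le:gproj-fibre}); the paper in fact leaves this deduction to the reader.

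Two small points of precision. First, your parenthetical about finite Krull dimension when invoking Lemma~\ref{le:symmetric} is unnecessary and potentially misleading: you only need the forward implication (Gorenstein projective over $A$ implies projective over $R$), which holds for any regular $R$. Second, in the descent step your appeal to a ``tensor-colocalisation $\gamma$'' and an ``idempotent generating $\KacProj A$'' is loose --- it is not clear that $\KacProj A$ is a smashing ideal of $\KProj A$ in the required sense. The conclusion $C\otimes_R T_N\cong C'\otimes_R T_N$ with $C'\in\KacProj A$ is nonetheless correct, and the cleanest justification is simply to use the unit $u$ of the tensor structure on $\KacProj A$ furnished by Lemma~\ref{le:GProj-Kac}: since $u\otimes_R T_N\cong T_N$ in $\KacProj A$ and $\KacProj A$ is a tensor ideal, one has $C\otimes_R T_N\cong (C\otimes_R u)\otimes_R T_N$ with $C\otimes_R u\in\KacProj A$, which gives $\Loc^{\otimes}_{\KProj A}(T_N)=\Loc^{\otimes}_{\KacProj A}(T_N)$ as you need.
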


The cohomological aspect concerns the relationship between the fibres of the cohomology algebra $S\colonequals \Ext^*_A(R,R)$ of $A$ and the cohomology algebra of the fibres of the $R$-algebra $A$. Namely, for each $\fp$ in $\Spec R$ there is natural map 
\[ 
\kappa_{\fp}\colon S\otimes_R k(\fp) \lra \Ext_{\fibre    A\fp}^*(k(\fp),k(\fp))
\]
of $k(\fp)$-algebras. The question is when this map induces a bijection on spectra.   Its import is clear from the next result.

\begin{theorem}
\label{ith:homeo=stratification}
Let $R$ be a regular ring and $A$ a finite cocommutative Hopf algebra over $R$ such that the $R$-algebra $S$ is finitely generated. If the map $\kappa_\fp$ induces a bijection on spectra for each $\fp$ in $\Spec R$, then the tensor-triangulated category
$\uGProj A$ is stratified by the action of $S$, and the support of
$\uGProj A$ is $\Prj S$.
\end{theorem}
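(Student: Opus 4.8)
The plan is to apply the Benson--Iyengar--Krause criterion for stratification: since $R$ is noetherian and $S$ is a finitely generated $R$-algebra, $S$ is graded-commutative noetherian and acts canonically on the rigidly compactly generated tensor-triangulated category $\uGProj A$, so the local-to-global principle is available, and it then remains to prove that every stratum $\gam_\fp\uGProj A$, $\fp\in\Spec S$, is either zero or a minimal localising tensor ideal, together with $\supp_S(\uGProj A)=\Prj S$.

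The key point is a \emph{bridge} between the strata of $\uGProj A$ and those of the fibre categories. Fix a homogeneous prime $\fp$ of $S$ and set $\fq\colonequals\fp\cap R\in\Spec R$; note $\fp$ contains $\fq S$ and misses $R\setminus\fq$, so it defines a homogeneous prime of $S\otimes_R k(\fq)$, which, via the hypothesis that $\kappa_\fq$ induces a bijection on spectra --- hence (in this graded setting) a homeomorphism --- corresponds to a homogeneous prime $\overline\fp$ of $T_\fq\colonequals\Ext^*_{\fibre A\fq}(k(\fq),k(\fq))$. Recall $\fibre A\fq$ is a finite-dimensional Frobenius algebra, so $\uGProj\fibre A\fq=\StMod\fibre A\fq$, and $-\otimes_R k(\fq)$ is a tensor-triangulated functor $\uGProj A\to\uGProj\fibre A\fq$. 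For $X\in\uGProj A$ with $\supp_S X\subseteq\{\fp\}$ one has $\supp_R X\subseteq\{\fq\}$, hence $\fibre X{\fq'}=0$ for all $\fq'\ne\fq$ in $\Spec R$; moreover base change carries $\gam_\fp\uGProj A$ into the stratum $\gam_{\overline\fp}\uGProj\fibre A\fq$ and does not annihilate its nonzero objects, so $\supp_{T_\fq}(\fibre X\fq)=\{\overline\fp\}$ whenever $X\ne0$ in $\gam_\fp\uGProj A$. (If $\fp$ is the irrelevant ideal of $S\otimes_R k(\fq)$, or $\fibre A\fq$ is semisimple, then $\gam_\fp\uGProj A=0$, and such $\fp$ do not lie in $\Prj S$.)

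Granting the bridge, minimality follows from the fibrewise criterion. Let $\fp\in\Prj S$ lie over $\fq$, let $\cat C\subseteq\gam_\fp\uGProj A$ be a nonzero localising tensor ideal, and choose $0\ne M\in\cat C$ and arbitrary $N\in\cat C$. By Theorem~\ref{ith:lg-Gproj-ha} it suffices to show $\fibre N{\fq'}\in\Loc^\otimes(\fibre M{\fq'})$ in $\uGProj\fibre A{\fq'}$ for every $\fq'\in\Spec R$. For $\fq'\ne\fq$ both fibres vanish. For $\fq'=\fq$, the bridge puts $\fibre M\fq$ and $\fibre N\fq$ in $\gam_{\overline\fp}\uGProj\fibre A\fq$, with $\fibre M\fq\ne0$; since $\uGProj\fibre A\fq=\StMod\fibre A\fq$ is stratified by $T_\fq$ by \cite{Benson/Iyengar/Krause/Pevtsova:2018a}, the stratum $\gam_{\overline\fp}\uGProj\fibre A\fq$ is minimal, whence $\fibre N\fq\in\Loc^\otimes(\fibre M\fq)$. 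Therefore $N\in\Loc^\otimes(M)$ in $\uGProj A$, and since $\gam_\fp$ commutes with the formation of localising tensor ideals, $N\in\Loc^\otimes(M)$ already in $\gam_\fp\uGProj A$; thus $\cat C=\gam_\fp\uGProj A$. Finally, $\uGProj A=\Loc^\otimes(R)$ gives $\supp_S(\uGProj A)=\supp_S R$; applying the bridge to the unit $R$, whose fibre at $\fq$ is the unit $k(\fq)$ of $\uGProj\fibre A\fq$ (which has full support $\Prj T_\fq$), shows $\Prj S\subseteq\supp_S R$, while no nonzero object is supported at an irrelevant prime. This gives the theorem.

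The heart of the argument, and the step I expect to require genuine work, is the bridge of the second paragraph: that localising $\uGProj A$ at a prime $\fp$ of $S$ lying over $\fq\in\Spec R$ is compatible, through the functor $\uGProj A\to\uGProj\fibre A\fq$, with localising $\uGProj\fibre A\fq$ at the corresponding prime of $T_\fq$. This breaks into (a) showing that objects of $\gam_\fp\uGProj A$ are concentrated at $\fq$ over $R$ and are not killed by $-\otimes_R k(\fq)$; (b) compatibility of the local cohomology functors with base change; and (c) the cohomological transfer of supports along $\kappa_\fq$, using that a bijection of homogeneous spectra is here a homeomorphism, so that the $S\otimes_R k(\fq)$- and $T_\fq$-module structures on $\uGProj\fibre A\fq$ induce the same local cohomology functors. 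Everything else is the standard Benson--Iyengar--Krause reduction, with the gluing supplied entirely by Theorem~\ref{ith:lg-Gproj-ha}.
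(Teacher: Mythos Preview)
Your outline is essentially the paper's argument. The ``bridge'' you isolate as the heart of the matter is exactly Lemma~\ref{le:support-fibre}, which the paper proves in $\KProj A$; the remaining ingredients---the fibrewise criterion (Theorem~\ref{ith:lg-Gproj-ha}, respectively Theorem~\ref{th:lg-kproj-ha}) and the BIKP stratification of each $\StMod\fibre A\fq$---are the same. The paper packages things slightly differently: it first proves stratification for the ambient category $\KProj A$ with support $\Spec S$ (Theorem~\ref{th:homeo=stratification}), using the characterisation in Lemma~\ref{le:stratification} rather than checking minimality of individual strata, and then deduces the result for $\uGProj A$ by identifying it with the localising tensor ideal $\KacProj A\subseteq\KProj A$, whose support is $\Prj S$ (Lemma~\ref{le:Kac-tensor-ideal}). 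Your direct approach in $\uGProj A$ is equally valid and avoids this detour, at the cost of having to argue separately that irrelevant primes do not occur in the support.

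One slip: to prove minimality of $\gam_\fp\uGProj A$ you must take $N$ arbitrary in $\gam_\fp\uGProj A$, not in $\cat C$; as written you only show that $\cat C$ is generated by each of its nonzero objects. With that correction the argument goes through.
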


This result is proved at the end of  Section~\ref{se:hopf-algebras},
where we also recall what it means for a tensor-triangulated to be
stratified by an action of a ring. For now it suffices to record that
the result above, when it applies, yields the sought after
classification of localising tensor ideals of $\uGProj A$ and of the
thick tensor ideals of $\uGproj A$. 

As to the hypothesis on $A$ in the theorem above: We expect that
finite generation holds for all finite cocommutative Hopf algebras. We
do not know if the hypothesis on $\kappa^a_\fp$ does as well; we
rather suspect that it does not. It does hold for the group algebra
$A=RG$ over a finite group. This  is a result of Benson and Habegger~\cite{Benson/Habegger:1987a}. The proof there is lacking detail, so we provide a full proof here in greater generality; see Theorem~\ref{th:f-iso-RG}, and also the recent work of Lau~\cite[Section~7]{Lau:Bsp}.   Putting this together with Theorem~\ref{ith:homeo=stratification} yields the following stratification result.

\begin{theorem}
\label{ith:stratification-uRG}
With $G$ a finite group and $R$ a regular ring, the
tensor-triangulated category $\uGProj RG$ is stratified by the action of the group cohomology ring $H^*(G,R)$.
\end{theorem}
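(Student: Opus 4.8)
The plan is to deduce Theorem~\ref{ith:stratification-uRG} by applying Theorem~\ref{ith:homeo=stratification} to the group algebra $A = RG$. For this I must verify the two hypotheses of that theorem: that the cohomology algebra $S = \Ext^*_{RG}(R,R) = H^*(G,R)$ is finitely generated as an $R$-algebra, and that for every prime $\fp \in \Spec R$ the comparison map $\kappa_\fp \colon H^*(G,R) \otimes_R k(\fp) \to \Ext^*_{\fibre{RG}{\fp}}(k(\fp),k(\fp)) = H^*(G, k(\fp))$ induces a bijection on prime spectra. Granting both, Theorem~\ref{ith:homeo=stratification} immediately gives that $\uGProj RG$ is stratified by the action of $H^*(G,R)$ with support $\Prj H^*(G,R)$, which is exactly the assertion.

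Finite generation of $H^*(G,R)$ as an $R$-algebra is the classical Evens--Venkov theorem in the form valid over a commutative noetherian base ring; since $RG$ is finitely generated and free as an $R$-module and $R$ is noetherian, the standard argument (a finite free resolution is not available in general, but the norm/transfer argument of Evens, or the fact that $H^*(G,R)$ is a subquotient-controlled extension of $H^*$ of a Sylow-type subgroup together with a spectral sequence comparison) shows $H^*(G,R)$ is a finitely generated $R$-algebra; alternatively one can invoke the general finite-generation results already cited for finite cocommutative Hopf algebras in the relevant regular-ring setting. The substantive input is the second hypothesis: the claim that $\kappa_\fp$ is an $F$-isomorphism — in particular a bijection on spectra — for each $\fp$. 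This is precisely Theorem~\ref{th:f-iso-ZG}, attributed to Benson--Habegger, which the paper reproves; I would simply cite it. Since an $F$-isomorphism of graded commutative rings (a homomorphism with nilpotent kernel such that some $p$-th power of every element of the target lies in the image) induces a homeomorphism on $\Spec$ and a fortiori a bijection, the hypothesis of Theorem~\ref{ith:homeo=stratification} is met.

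Concretely the proof is short: invoke Evens--Venkov (finite generation of $S = H^*(G,R)$ over $R$), invoke Theorem~\ref{th:f-iso-ZG} (each $\kappa_\fp$ is an $F$-isomorphism, hence a bijection on spectra), and then apply Theorem~\ref{ith:homeo=stratification} verbatim. One should also note that $\fibre{RG}{\fp} \cong k(\fp)G$ and that under this identification $\kappa_\fp$ is the natural base-change map in group cohomology, so that the hypothesis being checked is genuinely the statement of Theorem~\ref{th:f-iso-ZG}; this identification is routine since $RG \otimes_R k(\fp) = k(\fp)G$.

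The main obstacle is not in this final deduction, which is formal once the two ingredients are in place, but in the proof of Theorem~\ref{th:f-iso-ZG} itself — i.e., that the base-change map $H^*(G,R) \otimes_R k(\fp) \to H^*(G,k(\fp))$ is an $F$-isomorphism. The difficulty there is that $H^*(G,R)$ need not be flat or have any simple base-change behaviour: the universal coefficient spectral sequence relating $H^*(G,R)\otimes k(\fp)$ and $\operatorname{Tor}$-terms to $H^*(G,k(\fp))$ has error terms, and one must control these multiplicatively and up to nilpotents and $p$-th powers. The Benson--Habegger argument handles this by reduction to elementary abelian subgroups via Quillen stratification combined with a careful analysis of the spectral sequence edge maps; making that rigorous is the real content, but for the present theorem it is a black box supplied by Theorem~\ref{th:f-iso-ZG}.
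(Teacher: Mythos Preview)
Your approach is essentially the paper's own: verify finite generation of $H^*(G,R)$ via Evens--Venkov, verify that each $\kappa_\fp$ induces a bijection on spectra, and apply Theorem~\ref{ith:homeo=stratification}. Two small corrections: the result you need is Theorem~\ref{th:f-iso-RG} (the version for a general $R$, where one also needs $R_\fp$ a domain---automatic here since regular local rings are domains, and the characteristic~$0$ case is trivial as both sides vanish in positive degrees), not Theorem~\ref{th:f-iso-ZG}, which only treats $R=\bbZ$; and the paper's proof of the $F$-isomorphism does not go via Quillen stratification but via a direct Bockstein argument (Lemmas~\ref{le:delta}, \ref{le:f-surjective}, Proposition~\ref{pr:f-surjective}).
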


In fact we prove this stratification result for the slightly bigger
compactly generated tensor-triangulated category $\KProj {RG}$
consisting of complexes of projective $RG$-modules up to homotopy. The
subcategory of compact objects identifies with the bounded derived
category $\dbcat {RG}$. The resulting classification of thick tensor ideals of $\dbcat {RG}$
has been obtained by Lau~\cite{Lau:Bsp} using  different
methods. Building on his work, and again using very different methods,
Barthel~\cite{Barthel:strat, Barthel:strat-regular} established a classification of 
localising tensor ideals of $RG$-modules that are projective as $R$-modules, which is closely related to Theorem~\ref{ith:stratification-uRG}.

Another case where Theorem~\ref{ith:homeo=stratification} applies is
when $A$ is an exterior algebra, over a finite free $R$-module,
regarded as a $\bbZ/2$-graded Hopf algebra; see
Example~\ref{ex:exterior}.

So far we have focussed on Hopf algebras, but in fact an appropriate
version of Theorem~\ref{ith:lg-Gproj-ha} holds for any arbitrary
finite projective $R$-algebras $A$; see Theorem~\ref{th:lg-kproj}. In
such contexts the natural cohomology ring to consider, vis-\`a-vis
stratification, is the Hochschild cohomology of $A$ over $R$. It is
plausible that the analogue of the map $\kappa^a_\fp$ in that context
is the key to patching fibrewise stratification, when available, to
get a global stratification result for $A$.

\section{A fibrewise criterion for localising subcategories}
\label{se:local-global}

Throughout $R$ is a commutative noetherian ring and $A$ a \emph{finite projective $R$-algebra}; that is to say $A$ is an $R$-algebra that is finitely generated and projective as an $R$-module.  In particular, as a ring $A$ is noetherian on the left  and on the right. Given an $A$-complex $X$ and a point $\fp$ in $\Spec R$, the Zariski spectrum of $R$, set
\[
\fibre X{\fp}\colonequals X\otimes_R k(\fp)
\]
viewed as a complex of $\fibre A{\fp}$-modules. The assignment $X\mapsto \fibre X{\fp}$ is exact on the homotopy category of $A$-complexes, and hence also on any of its subcategories, in particular, on $\KProj A$, the homotopy category of projective $A$-modules. The latter has a natural structure of a triangulated category, with arbitrary coproducts. Given an $A$-complex $Y$ in $\KProj A$ we write $\Loc(Y)$ for the localising subcategory of the homotopy category generated by $Y$.  The main result in this section is the following \emph{fibrewise criterion} for detecting objects in $\Loc(Y)$. 

\begin{theorem}
\label{th:lg-kproj}
Suppose that $R$ is regular. Let $A$ be a finite projective $R$-algebra, and let $X,Y$ be objects in $\KProj A$. The following conditions are equivalent. 
\begin{enumerate}[\quad\rm(1)]
\item
 $X\in  \Loc(Y)$ in $\KProj A$;
\item
 $\fibre X\fp\in \Loc(\fibre Y{\fp})$ in $\KProj{\fibre A\fp}$ for each $\fp\in \Spec R$.
\end{enumerate}
\end{theorem}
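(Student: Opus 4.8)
The implication (1)$\Rightarrow$(2) is the easy direction: the base-change functor $X\mapsto \fibre X{\fp}=X\otimes_R k(\fp)$ from $\KProj A$ to $\KProj{\fibre A\fp}$ is exact (it is just tensoring a complex of projective $R$-modules with a field, which preserves acyclicity and coproducts) and sends $Y$ to $\fibre Y\fp$. An exact coproduct-preserving functor sends $\Loc(Y)$ into $\Loc(\fibre Y\fp)$, so the containment descends. The content is all in (2)$\Rightarrow$(1).

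For (2)$\Rightarrow$(1) the plan is a standard localisation-theoretic argument, using the hypothesis that $R$ is regular so that $\Spec R$ is well-behaved. Let $\cat L\colonequals\Loc(Y)\subseteq\KProj A$. The goal is to show $X\in\cat L$. First I would produce, in the Bousfield localisation sequence attached to $\cat L$, a triangle
\[
X'\lra X\lra X''\lra
\]
with $X'\in\cat L$ and $X''$ right-orthogonal to $\cat L$; one needs $\KProj A$ to be well generated (or compactly generated) for this to exist — this holds because $A$ is a finite projective $R$-algebra, so $\KProj A$ is compactly generated with compacts $\dbcat A$, and $\cat L$ is a localising subcategory generated by a single object, hence $\cat L$ is itself well generated and the localisation exists. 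It then suffices to show $X''=0$. For this I want to test fibrewise: the key point is to show that base change $(-)_{k(\fp)}$ takes the orthogonality $X''\perp\cat L$ to orthogonality in $\KProj{\fibre A\fp}$, i.e. $\fibre{X''}\fp$ lies right-orthogonal to $\Loc(\fibre Y\fp)$. Combined with hypothesis (2), which gives $\fibre X\fp\in\Loc(\fibre Y\fp)$, and applying $(-)_{k(\fp)}$ to the triangle above, one gets that $\fibre{X''}\fp$ is simultaneously in $\Loc(\fibre Y\fp)$ and in its right orthogonal, forcing $\fibre{X''}\fp=0$ in $\KProj{\fibre A\fp}$ for every $\fp$. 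The final step is a fibrewise-vanishing criterion: an object $Z\in\KProj A$ with $\fibre Z\fp=0$ for all $\fp\in\Spec R$ must itself be zero. Here regularity of $R$ is essential — a complex of projective $R$-modules all of whose fibres are acyclic is itself acyclic (this is a Nakayama-type / support argument over the regular ring $R$, one reduces to the local case and then to residue fields), and moreover acyclicity in $\KProj A$ is not automatically contractibility, so one must be a little careful: I would instead phrase the vanishing criterion directly in $\KProj A$ via the compact generators, checking that $\Hom_{\KProj A}(C,Z)=0$ for all compact $C$, which can be detected after base change because the compacts are built from the regular $R$-module structure.

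The main obstacle, and the step I expect to need the most care, is the compatibility of the orthogonal complement with base change: showing that if $X''$ is right-orthogonal to $\Loc(Y)$ in $\KProj A$ then $\fibre{X''}\fp$ is right-orthogonal to $\Loc(\fibre Y\fp)$ in $\KProj{\fibre A\fp}$. Base change has a right adjoint (corestriction along $k(\fp)\otimes_R -$, or rather the functor induced by $R\to k(\fp)$ is not a ring map, so one must use that $k(\fp)$ has a finite free resolution over $R_\fp$ and argue through $R_\fp$ and then the residue field, i.e. factor $R\to R_\fp\to k(\fp)$). Tracking Hom-groups through this factorisation — first a flat localisation, then base change along a surjection with perfect kernel using that $R_\fp$ is regular local — is where regularity really enters, and getting the adjunctions and the behaviour of generators right through both stages is the technical heart of the proof. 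Once that compatibility is in hand, the rest assembles formally from Bousfield localisation and the fibrewise vanishing criterion.
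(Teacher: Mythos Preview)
Your overall architecture --- form the Bousfield triangle $X'\to X\to X''$ for $\cat L=\Loc(Y)$, push it through base change, and kill $X''$ fibrewise --- is a reasonable alternative to what the paper does, but it is not the paper's argument, and as written it has a real gap.

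The paper does not use Bousfield localisation at all. Instead it invokes the \emph{local-to-global theorem} for the $R$-action on $\KProj A$ (equation~\eqref{eq:local-to-global}): $\Loc(X)=\Loc(\{\gam_\fp X\mid\fp\in\Spec R\})$. This reduces the problem to showing, for each $\fp$, that $\fibre X\fp\in\Loc(\fibre Y\fp)$ implies $\gam_\fp X\in\Loc(\gam_\fp Y)$. That implication is established by identifying $\pi_r\pi(-)$ with $(-)\otimes_{R_\fp}K$ via the Koszul complex on $\fp R_\fp$ (Lemma~\ref{le:unit}), then using $\Loc(\gam_{V(\fp R_\fp)}Z)=\Loc(Z\otimes_{R_\fp}K)$ (Lemma~\ref{le:gamma-I}), and finally pushing back along $\lambda_r$. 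Everything flows in one direction, through functors that preserve \emph{localising} subcategories; no orthogonals are ever transported.

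Your route requires two facts you do not establish. First, the orthogonality step: the passage $R_\fp\to k(\fp)$ does go through cleanly via the Koszul adjunction (since $\pi_r\pi X''\cong X''\otimes K$ lies in $\Thick(X'')\subseteq\Loc(Y)^\perp$), but the flat localisation step $R\to R_\fp$ does not. You would need $L_{Z(\fp)}X''\in\Loc(Y)^\perp$, and $\Loc(Y)^\perp$ is only \emph{colocalising}; the smashing functor $L_{Z(\fp)}$ preserves localising subcategories, not colocalising ones, without further input. Second, and more seriously, your fibrewise vanishing criterion --- ``$\Hom(C,Z)=0$ can be detected after base change because the compacts are built from the regular $R$-module structure'' --- is not a proof. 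There is no direct formula relating $\Hom_{\bfK(A)}(C,Z)\otimes_R k(\fp)$ to $\Hom_{\bfK(A_{k(\fp)})}(\fibre C\fp,\fibre Z\fp)$ for unbounded $Z$. What actually makes fibrewise vanishing true is again the local-to-global theorem: $\fibre Z\fp=0$ gives $Z_\fp\otimes_{R_\fp}K=0$, hence $\gam_\fp Z=0$ for every $\fp$, hence $Z\in\Loc(\{\gam_\fp Z\})=0$.

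So both of your hard steps are secretly the local-to-global theorem in disguise. Once you import that theorem, your argument can be completed, but then the paper's direct route --- use local-to-global to reduce to a single prime, then apply the Koszul adjunction once --- is shorter and avoids the Bousfield layer entirely.
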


The ring $R$ is by definition \emph{regular} if every finitely
generated $R$-module has finite projective dimension.  An equivalent
condition is that the ring $R_\fp$ has finite global dimension for
each $\fp$ in $\Spec R$. The global dimension of $R$ is then equal to
$\dim R$, its Krull dimension.

The statement above is inspired by,
and extends, an analogous statement for the derived category of $A$,
established in \cite{Gnedin/Iyengar/Krause:2022a}. It yields also a
statement about the stable category of Gorenstein projective
$A$-modules; see Theorem~\ref{th:lg-GProj}.

The proof of the theorem above takes some preparation and is given towards the end of this section. We start by  recalling some properties of the homotopy category of  projective modules.

\subsection*{The homotopy category of projectives}
For the moment, $A$ can be any ring that is noetherian on both sides;
that is to say, $A$ is noetherian as a left and as a right $A$-module.
For us, $A$-modules mean left $A$-modules, and $\op A$-modules are
identified with right $A$-modules. When $A$ is an $R$-algebra for some
commutative ring $R$, then it is convenient
to write for any $A$-module the $R$-action on the opposite side.  We
denote by $\Mod A$ the (abelian) category of $A$-modules and by
$\rmod A$ its full subcategory consisting of finitely generated
modules. The full subcategory of $\Mod A$ consisting of projective
modules is denoted $\Prj A$.

For any additive category $\cat A\subseteq\Mod A$, like the ones in the last paragraph, $\bfK(\cat A)$ will denote the associated homotopy category, with its natural structure as a triangulated category. Morphisms in this category are denoted $\Hom_{\bfK(A)}(-,-)$.  An object $X$ in $\bfK(\cat A)$ is \emph{acyclic} if $\hh X=0$, and the full subcategory of acyclic objects in $\bfK(\cat A)$ is denoted $\bfK_{\mathrm{ac}}(\cat A)$.

Let $\dcat A$ denote the derived category of $A$-modules and $\dbcat A$ the bounded derived category of $\rmod A$. Let $\bfq\colon \KMod A\to \dcat A$ be the localisation functor; its kernel is $\bfK_{\mathrm{ac}}(\Mod A)$.  We write $\bfq$ also for its restriction to the homotopy category of projective modules. This functor has an adjoint:
\begin{equation}\label{eq:p-q}
\begin{tikzcd}
\KProj A    
  \arrow[twoheadrightarrow,yshift=-.75ex]{r}[swap]{\bfq}
  &   \arrow[tail,yshift=.75ex]{l}[swap]{\pres} \dcat A\,.
\end{tikzcd}
\end{equation}
Our convention is to write the left adjoint above the corresponding right one. In what follows it is convenient to conflate $\pres$ with  $\pres\circ\bfq$.
 The image of $\pres$, denoted $\kproj A$, consists precisely of the K-projective complexes, namely, those complexes $P$ such that $\Hom_{\bfK(A)}(P,-)=0$ on acyclic complexes in $\bfK(\Mod A)$.  

\subsection*{Compact objects}
The category $\KProj A$ is triangulated, admits arbitrary direct sums, and is compactly generated.  As in any triangulated category with arbitrary direct sums, an object $X$ in $\KProj A$ is \emph{compact} if $\Hom_{\bfK(A)}(X,-)$ commutes with direct sums. The compact objects in $\KProj A$ form a thick subcategory,
denoted $\KProjc A$. The assignment $X\mapsto \Hom_{\op A}(\pres M,A)$ induces an equivalence
\begin{equation}
\label{eq:kprojc}
\iota_A\colon \op{\dbcat{\op A}} \longiso \KProjc A.
\end{equation}
This result is  due to
J{\o}rgensen~\cite[Theorem~3.2]{Jorgensen:2005a}; see also
\cite{Iyengar/Krause:2006a}.

\subsection*{Regular rings}

Recall that the ring $R$ is regular if every finitely generated
$R$-module has finite projective dimension, that is, every complex in
$\dbcat R$ is perfect.  We record a couple of basic facts for later
use.

\begin{lemma}
\label{le:regular}
Let $R$ be a commutative noetherian ring.
\begin{enumerate}[\quad\rm(1)]
\item  The ring $R$ is regular if and only if every
  acyclic complex in $\KProj R$ is null-homotopic.
  \item Suppose that $R$ is regular and local of Krull dimension $d$ with residue field
    $k$. Then $\RHom_R(k,R)\iso \Sigma^d k$.
    \end{enumerate}
\end{lemma}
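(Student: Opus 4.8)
For part (1), the plan is to argue both directions through the structure of K-projective complexes and the adjunction~\eqref{eq:p-q}. If $R$ is regular, every bounded complex in $\dbcat R$ is perfect, hence K-projective, and more generally I would show that every complex in $\dcat R$ is K-projective: a complex of projectives over a ring of finite global dimension is automatically K-projective because any bounded-above resolution terminates, and an arbitrary complex is built from its (good) truncations, which sit in triangles and are each quasi-isomorphic to a bounded-above complex of projectives of bounded projective dimension. Granting that $\bfq$ is an equivalence onto $\kproj R = \KProj R$, the kernel $\bfK_{\mathrm{ac}}(\Prj R)$ of $\bfq$ restricted to $\KProj R$ is zero, which says precisely that every acyclic complex of projectives is null-homotopic. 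Conversely, if $\KacProj R = 0$, then for any finitely generated $M$ I would take a projective resolution $P \to M$, which is bounded above; were $\projdim_R M$ infinite, I can splice $P$ with its "dual" or use a standard device to produce a nonzero acyclic complex of projectives, or more directly: a bounded-above acyclic complex of projectives that is null-homotopic has all its syzygies projective, forcing the resolution to be bounded. Hence $\projdim_R M < \infty$ for all finitely generated $M$, i.e.\ $R$ is regular.

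For part (2), let $R$ be regular local of Krull dimension $d$ with residue field $k$. The plan is to compute $\RHom_R(k,R)$ via the Koszul complex $K$ on a regular system of parameters $x_1,\dots,x_d$, which is a finite free resolution of $k$ since $R$ is Cohen--Macaulay (being regular) and the $x_i$ form a regular sequence. Then $\RHom_R(k,R) \simeq \Hom_R(K,R)$, and the Koszul complex is self-dual up to a shift: $\Hom_R(K,R) \cong \Sigma^{-d} K$ as complexes of free modules (the dual of the Koszul complex on a sequence is the Koszul complex on the same sequence, reindexed by the top exterior power). Therefore $\RHom_R(k,R) \simeq \Sigma^{-d} K \otimes_R^{\mathrm L}\!$ — more carefully, $H^i(\Hom_R(K,R))$ is the $i$-th Koszul cohomology, which vanishes except in degree $d$ where it is $R/(x_1,\dots,x_d) = k$. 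This gives $\RHom_R(k,R) \iso \Sigma^d k$, i.e.\ $\Ext^i_R(k,R) = 0$ for $i \neq d$ and $\Ext^d_R(k,R) \cong k$.

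I do not expect either part to present a serious obstacle; this is standard homological algebra over regular rings. The one point requiring a little care is the forward direction of~(1): making precise the passage from "finite global dimension locally" to "no nonzero acyclic complexes of projectives globally," since one must handle unbounded complexes and cannot simply truncate naively. The cleanest route is to invoke that $R$ regular implies $\dcat R \iso \kproj R$ via $\pres$, equivalently that $\bfq\colon \KProj R \to \dcat R$ is an equivalence, whose kernel $\KacProj R$ is then zero; this is already implicit in the discussion of~\eqref{eq:p-q} once one knows every complex over a finite-global-dimension ring is K-projective. For part~(2), the only bookkeeping is the sign/shift in Koszul self-duality, which contributes the $\Sigma^d$.
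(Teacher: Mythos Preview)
Your treatment of part~(2) matches the paper's: compute via the Koszul complex on a regular system of parameters and invoke self-duality $\Hom_R(K,R)\cong\Sigma^{-d}K$.

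Part~(1), however, has genuine gaps in both directions. For the forward direction your argument works only for rings of finite global dimension, but regular does not imply this (Nagata's noetherian regular rings of infinite Krull dimension), and the passage from ``finite global dimension locally'' to ``no nonzero acyclic complexes of projectives globally'' is precisely the nontrivial content to be established; invoking $\dcat R\simeq\kproj R$ as a shortcut is circular. For the converse, your plan never produces an acyclic complex of projectives: the resolution $P\to M$ is not itself acyclic; a bounded-above acyclic complex of projectives is \emph{always} contractible regardless of $R$, so that observation uses no hypothesis; and splicing $P$ with its $R$-dual yields an acyclic complex only when $M$ is already Gorenstein projective, which you cannot assume. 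The paper bypasses both difficulties by arguing with compact objects: the functor $\pres$ restricted to compacts is, via~\eqref{eq:kprojc}, the embedding of perfect complexes into $\dbcat R$, and this embedding is an equivalence if and only if $\pres$ itself is, which in turn holds if and only if $\bfq$ is an equivalence, i.e.\ $\KacProj R=0$. Since the embedding of perfect complexes into $\dbcat R$ is an equivalence exactly when $R$ is regular, both directions follow in one stroke.
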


\begin{proof}
(1) The functor $\pres$ in \eqref{eq:p-q} restricted to compact objects embeds the perfect complexes over $R$ into $\dbcat R$, via \eqref{eq:kprojc}. This embedding is an equivalence if and only $\pres$ is an equivalence. Clearly,  $\pres$ is an
equivalence if and only if $\bfq$ is an equivalence. See also \cite{Iacob/Iyengar:2009a}.

(2) The Koszul complex $K$ on a minimal generating set for the maximal
ideal of $R$ provides a projective resolution $K\to k$. Since $\Hom_R(K,R)\cong \Sigma^{-d}K$, by
\cite[Proposition~1.6.10]{Bruns/Herzog:1998a}, the stated assertion follows. 
\end{proof}

\subsection*{Fibres and the Koszul complex}

We return to the context of finite projective algebras over a
commutative ring $R$ and wish to describe the functor
$-\otimes_R k(\fp)$ for each prime $\fp$ in $\Spec R$ via the Koszul
complex for $\fp$. We reduce to the local case.

Let $(R,\fm,k)$ be a local ring, with maximal ideal $\fm$ and residue
field $k$. In this paragraph, we write $A_k$ instead of
$A_{k(\fm)}$. Let $\pi$ be the functor from $\KProj A$ to
$\KProj{A_k}$ given by the assignment
$X\mapsto X_k\colonequals X\otimes_Rk$. It is clear that $\pi$
preserves coproducts and so has a right adjoint by Brown
representability, say $\pi_r$. Thus there is
the adjoint pair
\begin{equation}
\label{eq:adjunction}
\begin{tikzcd}
    \KProj A \arrow[yshift=.5ex]{r}{\pi} 
    	& \KProj{A_k}\,. \arrow[yshift=-.5ex]{l}{\pi_r}
\end{tikzcd}
\end{equation}

The following observation is useful in the sequel.

\begin{lemma}
\label{le:compacts}
Consider an adjoint pair of functors 
$
\begin{tikzcd}
  \cat T \arrow[yshift=.5ex]{r}{\pi} 
    	& \cat U \arrow[yshift=-.5ex]{l}{\pi_r}
\end{tikzcd}
$ between compactly generated triangulated categories. Then $\pi$
preserves all coproducts if and only if $\pi$
preserves compact objects. In that case the restriction
$ \pi^c\colon{\cat T}^{c}\to {\cat U}^{c}$  admits a right adjoint if
and only if $\pi_r$ preserves compact objects.
\end{lemma}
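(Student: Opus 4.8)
The plan is to handle the two assertions in turn, using only formal properties of adjoint functors between compactly generated triangulated categories. For the first equivalence, recall the standard fact (due to Neeman) that a coproduct-preserving exact functor $\pi$ between triangulated categories with coproducts has a right adjoint $\pi_r$, and that $\pi_r$ preserves coproducts if and only if $\pi$ preserves compact objects. But here I want the cleaner statement that, \emph{given} that $\pi_r$ already exists, $\pi$ preserves coproducts iff $\pi$ preserves compacts. The forward direction is the just-cited fact. For the converse, suppose $\pi$ preserves compact objects; I would show $\pi_r$ preserves coproducts by the adjunction isomorphism $\Hom_{\cat T}(C,\pi_r(\coprod Y_i))\cong\Hom_{\cat U}(\pi C,\coprod Y_i)$ for $C$ compact in $\cat T$: since $\pi C$ is compact in $\cat U$, this is $\coprod\Hom_{\cat U}(\pi C, Y_i)\cong\coprod\Hom_{\cat T}(C,\pi_r Y_i)\cong\Hom_{\cat T}(C,\coprod\pi_r Y_i)$, and because the compact objects generate $\cat T$ the canonical map $\coprod\pi_r Y_i\to\pi_r(\coprod Y_i)$ is an isomorphism. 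Then $\pi_r$ coproduct-preserving forces $\pi$ to preserve coproducts: again by adjunction, $\Hom_{\cat U}(\pi(\coprod X_j),Y)\cong\Hom_{\cat T}(\coprod X_j,\pi_r Y)\cong\prod\Hom_{\cat T}(X_j,\pi_r Y)\cong\prod\Hom_{\cat U}(\pi X_j,Y)$, and Yoneda gives $\pi(\coprod X_j)\cong\coprod\pi X_j$.

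For the second assertion, assume we are in the case where $\pi$ preserves compacts, so we have the restriction $\pi^c\colon\cat T^c\to\cat U^c$ between the (essentially small, idempotent-complete) categories of compact objects. A right adjoint $\rho$ to $\pi^c$ exists iff for each $V\in\cat U^c$ the functor $\Hom_{\cat U^c}(\pi^c(-),V)=\Hom_{\cat U}(\pi(-),V)|_{\cat T^c}$ on $\cat T^c$ is representable by a \emph{compact} object of $\cat T$. Now $\Hom_{\cat U}(\pi(-),V)\cong\Hom_{\cat T}(-,\pi_r V)$ as functors on all of $\cat T$, so this functor restricted to $\cat T^c$ is always represented — by $\pi_r V$, which lies in $\cat T$ but a priori not in $\cat T^c$. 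The point is that an object of $\cat T$ represents a functor on $\cat T^c$ iff... well, more precisely: $\Hom_{\cat T}(-,\pi_r V)|_{\cat T^c}$ is representable in $\cat T^c$ iff $\pi_r V$ is isomorphic to a compact object. Indeed, if $\rho(V)\in\cat T^c$ represents it, then the natural iso $\Hom_{\cat T}(C,\rho V)\cong\Hom_{\cat T}(C,\pi_r V)$ for all compact $C$, together with the fact that compacts generate $\cat T$, yields $\rho V\cong\pi_r V$; conversely if $\pi_r V$ is compact it represents the functor on $\cat T^c$. Hence $\pi^c$ admits a right adjoint iff $\pi_r V$ is compact for every compact $V$, i.e.\ iff $\pi_r$ preserves compact objects. (One should also check that these pointwise representing objects assemble into a functor $\rho$, which is automatic once each $\pi_r V$ is compact, since $\rho=\pi_r|_{\cat U^c}$.)

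The main obstacle is a subtle one rather than a computational one: making sure the generation-by-compacts argument is deployed correctly each time we want to upgrade an isomorphism-on-compacts into an honest isomorphism in $\cat T$ (or $\cat U$). Concretely, the assertion ``a natural isomorphism $\Hom_{\cat T}(C,X)\cong\Hom_{\cat T}(C,X')$ for all compact $C$ implies $X\cong X'$'' requires knowing the isomorphism is induced by a genuine map $X\to X'$ — which one gets from Yoneda applied to the representable functor $\Hom_{\cat T}(-,X')$ restricted appropriately, or equivalently by noting such a map has cone with no maps from any compact object, hence (compacts generating) the cone is zero. I would state this once as a preliminary remark and then invoke it uniformly. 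Everything else is the formal yoga of adjunctions, and no genuinely hard step remains.
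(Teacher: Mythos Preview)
Your proposal is correct and takes essentially the same approach as the paper: the first assertion is dispatched by citing Neeman, and for the second both you and the paper show that if a right adjoint $\rho$ to $\pi^c$ exists then the adjunction isomorphisms yield a map $\rho(Y)\to\pi_r(Y)$ inducing bijections on $\Hom_{\cat T}(C,-)$ for all compact $C$, hence an isomorphism by compact generation, so $\pi_r(Y)$ is compact. Your first paragraph is slightly tangled because the lemma as printed appears to contain a typo---a left adjoint $\pi$ automatically preserves coproducts, so the intended first assertion is surely that $\pi_r$ preserves coproducts iff $\pi$ preserves compacts---but you prove exactly this along the way.
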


\begin{proof}
  The first assertion is well-known; see \cite[Theorem~5.1]{Neeman:1996a}. Now assume  that $\pi$
  preserves compact objects and that $ \pi^c\colon{\cat T}^{c}\to
{\cat U}^{c}$ admits a right adjoint, say $\rho\colon{\cat U}^{c}\to {\cat
  T}^{c}$. Then for any  $X\in\cat T^c$ and $Y\in\cat U^c$ the natural
bijection
\[\Hom_{\cat T}(X,\rho(Y))\cong \Hom_{\cat U}(\pi(X),Y)\cong
  \Hom_{\cat T}(X,\pi_r(Y))
  \] 
  maps the identity  $\id_{\rho(Y)}$ to a morphism $\phi\colon\rho(Y)\to\pi_r(Y)$. The map $\Hom_{\cat T}(X,\phi)$ is a
bijection for all $X$. Since $\cat T$ is compactly generated,  $\phi$ is an isomorphism.

The other implication is clear.
\end{proof}

We need to understand the unit $\id \to \pi_r\pi$ and counit
$\pi\pi_r\to \id$ of the adjunction~\eqref{eq:adjunction}. To that
end, consider the Koszul complex, $K$, on a minimal generating set for
the ideal $\fm$. The map $R\to K$ of $R$-complexes induces a natural
map
\begin{equation}
\label{eq:unit}
X\cong X\otimes_R R \lra X\otimes_RK\qquad\text{for $X$ in $\KProj A$.}
\end{equation}
On the other hand, the augmentation $R\to k$ factors through $R\to K$
via a map $K\to k$, which induces
$V\colonequals K\otimes_Rk\to k\otimes_R k\iso k$ and therefore a
natural map
\begin{equation}
\label{eq:counit}
V\otimes_k Y \lra  k\otimes_k Y \cong Y \qquad\text{for $Y$ in $\KProj{A_k}$.}
\end{equation}
The result below is the key to the proof of Theorem~\ref{th:lg-kproj} and also in other computations that follow.

\begin{lemma}
\label{le:unit}
When $R$ is regular, the following statements hold.
\begin{enumerate}[\quad\rm(1)]
\item
Both $\pi$ and $\pi_r$ preserve compact objects and arbitrary direct sums.
 \item
 Restricting the adjunction \eqref{eq:adjunction} to compact objects gives the adjunction
\[
\begin{tikzcd}[column sep=large]
    \dbcat{\op A} \arrow[yshift=-.7ex]{r}[swap]{-\lotimes_R k} 
    	& \dbcat{(\op A_k)} \arrow[yshift=.7ex]{l}[swap]{\Sigma^{-d}\rho}
\end{tikzcd}
\]
where $\rho$ is induced by restriction along the map $A\to A_k$ and $d\colonequals \dim R$.
\item
 The maps \eqref{eq:unit} and \eqref{eq:counit} are the unit and the counit, respectively, of the adjunction~\eqref{eq:adjunction}. 
\item
For $X\in\KProj A$ and $Y\in \KProj{A_k}$, there are natural isomorphisms
\[
\pi_r\pi(X) \cong X\otimes_RK \qquad\text{and}\qquad \pi\pi_r(Y)\cong V\otimes_k Y\,.
\]
In particular, $\pi\pi_r(Y)$ is isomorphic to a finite direct sum of shifts of $Y$.
\end{enumerate}
\end{lemma}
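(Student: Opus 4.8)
The plan is to prove the four parts more or less in the order (2)/(3) first, then derive (1) and (4) as consequences, since the adjunction on compacts is the most concrete object and controls everything else.

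For part (2): the functor $\pi = -\otimes_R k$ clearly carries the finite projective $A$-module $A$ to the finite projective $A_k$-module $A_k$, and hence carries perfect complexes to perfect complexes; via the identification \eqref{eq:kprojc}, $\iota_A\colon \op{\dbcat{\op A}}\iso\KProjc A$ and its analogue over $A_k$, this says precisely that $\pi$ restricts to a functor $\dbcat{\op A}\to\dbcat{(\op A_k)}$, which one identifies with $-\lotimes_R k$ (here $R$ regular makes $k$ perfect over $R$, so $-\lotimes_R k$ lands in the bounded derived category). Since $\pi$ preserves compacts, Lemma~\ref{le:compacts} applies and tells us that $\pi^c$ has a right adjoint exactly when $\pi_r$ preserves compacts; so the content of part (2) is to compute that right adjoint and show it is $\Sigma^{-d}\rho$. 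For this I would use duality: on compacts, $-\lotimes_R k\colon \dbcat{\op A}\to\dbcat{(\op A_k)}$ is right adjoint, after applying the duality $\Hom_{\op A}(-,A)$ and $\Hom_{\op A_k}(-,A_k)$, to a functor which by Hom-tensor adjunction and Lemma~\ref{le:regular}(2) — the computation $\RHom_R(k,R)\iso\Sigma^d k$ — is the shift-by-$d$ of restriction along $A\to A_k$. Concretely, for $X\in\dbcat{\op A}$ and $Y\in\dbcat{(\op A_k)}$ one has
\[
\RHom_{\op A_k}(X\lotimes_R k, Y)\cong \RHom_{\op A}(X,\RHom_R(k,Y))\cong \RHom_{\op A}(X,\Sigma^{-d}\rho(Y))
\]
using $\RHom_R(k,Y)\cong\RHom_R(k,R)\lotimes_R Y\cong\Sigma^{-d}Y$ over $R$ and tracking the $A_k$-module structure through $\rho$. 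This is the step I expect to be the main obstacle: making the Hom-tensor manipulations precise at the level of complexes of bimodules, keeping the left and right $A$- and $A_k$-actions straight, and checking the isomorphism is natural and compatible with the duality equivalences $\iota_A$, $\iota_{A_k}$.

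Part (1) then follows: $\pi$ preserves compacts by the discussion above, and $\pi_r$ preserves compacts because by part (2) its restriction to compacts is $\Sigma^{-d}\rho$, which lands in $\dbcat{\op A}$ (as $A_k$ is finitely generated over $A$ and $R$ is noetherian); applying Lemma~\ref{le:compacts} in the other direction, $\pi_r$ then preserves all coproducts, and $\pi$ does too by construction.

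For part (3): I would verify that the maps \eqref{eq:unit} and \eqref{eq:counit} are the unit and counit by checking the triangle identities, or, more efficiently, by noting that a natural transformation $\id\to\pi_r\pi$ is determined by its effect on a compact generator together with naturality, and that on the compact generator $\pres A$ (or rather its image in $\dbcat{\op A}$) the map \eqref{eq:unit} $X\to X\otimes_R K$ realizes the canonical morphism $X\to X\lotimes_R R k(\fm)\text{-resolved}$ — more precisely, $K$ is a finite projective resolution of $k$ over $R$, so $X\otimes_R K\simeq X\lotimes_R k$ computes $\pi_r\pi(X)$ once we know $\pi_r\pi$ is given by that formula; thus parts (3) and (4) are really one statement. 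So I would instead prove part (4) directly: $\pi_r$ is determined up to isomorphism as the right adjoint of $\pi$, and for $X\in\KProj A$ the complex $X\otimes_R K$ is a complex of projective $A$-modules (as $K$ is a bounded complex of finite free $R$-modules), with the property that $\Hom_{\bfK(A)}(-,X\otimes_R K)$ on a compact $C$ agrees with $\Hom_{\bfK(A_k)}(\pi C, \pi X)$ — one checks this using $\Hom_{\bfK(A)}(C,X\otimes_R K)\cong \Hom_{\bfK(A)}(C,X)\otimes_R K$-type identities valid because $C$ is perfect, together with $C\otimes_R K\simeq C\lotimes_R k = \pi C$. Having pinned down $\pi_r\pi(X)\cong X\otimes_R K$, the dual computation gives $\pi\pi_r(Y)\cong (K\otimes_R k)\otimes_k Y = V\otimes_k Y$; and since $V=K\otimes_R k$ is a bounded complex of finite-dimensional $k$-vector spaces with zero differential (the Koszul complex on a minimal generating set becomes a complex with zero differential after reducing mod $\fm$), $V$ is isomorphic in $\bfK(k)$ to a finite direct sum of shifts of $k$, whence $V\otimes_k Y$ is a finite direct sum of shifts of $Y$. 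Finally, with the formula for $\pi_r\pi$ in hand, identifying \eqref{eq:unit} and \eqref{eq:counit} as unit and counit is a naturality check on the compact generator.
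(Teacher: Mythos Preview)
Your overall architecture is the same as the paper's: show that $\pi$ preserves compacts, identify the adjunction on compacts via the equivalence \eqref{eq:kprojc}, invoke Lemma~\ref{le:compacts} to conclude $\pi_r$ also preserves compacts and coproducts, and then verify the formulas in (3)/(4) by reducing to compacts. So the strategy is correct.

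However, your concrete computation for part (2) contains a genuine error. You write
\[
\RHom_{\op A_k}(X\lotimes_R k, Y)\cong \RHom_{\op A}(X,\RHom_R(k,Y))\cong \RHom_{\op A}(X,\Sigma^{-d}\rho(Y)).
\]
The first isomorphism is not the standard adjunction: extension of scalars $-\lotimes_R k$ is left adjoint to \emph{restriction}, so the right-hand side should be $\RHom_{\op A}(X,\rho Y)$, not $\RHom_{\op A}(X,\RHom_R(k,Y))$. The second isomorphism is also false: for $Y$ a complex of $k$-modules, $\RHom_R(k,Y)$ is not concentrated in a single shift of $Y$ (indeed $\Ext_R^*(k,k)$ is an exterior algebra of dimension $d$), so $\RHom_R(k,Y)\not\cong\Sigma^{-d}\rho(Y)$. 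What this computation really shows is that $\rho$ itself is the \emph{right} adjoint of $-\lotimes_R k$, which is not what the statement of (2) asserts.

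The point you correctly flagged but then dropped is that \eqref{eq:kprojc} is \emph{contravariant}, so the adjunction $\pi\dashv\pi_r$ on compacts, transported through $\iota_A$ and $\iota_{A_k}$, swaps left and right: you need $\Sigma^{-d}\rho$ to be the \emph{left} adjoint of $k\lotimes_R-$ on $\dbcat{\op A}$. The paper obtains this from the other adjunction, namely $\rho\dashv\RHom_R(k,-)$ (restriction is left adjoint to coinduction), together with $\RHom_R(k,M)\cong\RHom_R(k,R)\lotimes_R M\cong\Sigma^{-d}(k\lotimes_R M)$ for $M\in\dbcat{\op A}$; here the tensor--Hom swap is valid because $k$ is perfect over $R$, and $M$ lives over $A$, not over $A_k$, so no Tor obstruction arises. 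Once you make this correction, your plan goes through.

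For (3) the paper proceeds slightly more cleanly than your sketch: rather than verifying the universal property of $X\otimes_R K$ directly, it writes down a specific comparison map $X\otimes_R K\to\pi_r\pi(X)$ using the adjunction and the augmentation $K\otimes_R k\to k$, and then checks this is an isomorphism on compacts, where by (2) it becomes the map $X\otimes_R K\to X\lotimes_R k$ induced by the quasi-isomorphism $K\to k$. Your proposed route through identities like $\Hom_{\bfK(A)}(C,X\otimes_R K)\cong\Hom_{\bfK(A)}(C,X)\otimes_R K$ is more delicate to make precise at the level of homotopy categories.
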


\begin{proof}
(1) and (2) We have already observed that $\pi$ preserves arbitrary direct sums. We verify that $\pi$ preserves compacts, equivalently that  its right adjoint, $\pi_r$, preserves arbitrary direct sums.   Any compact object in $\KProj A$ is of the form $\iota_AM =\Hom_A(\pres_{\op A}M,A)$ for some $M\in \dbcat{\op A}$. A direct computation gives 
\begin{align*}
\pi \Hom_A(\pres_{\op A}M,A) 
	&= \Hom_A(\pres_{\op A}M,A) \otimes_R k \\
	&\cong \Hom_{A_k}((\pres_{\op A}M)\otimes_Rk, A_k) \\
	&\cong \Hom_{A_k}(\pres_{\op A}(k\lotimes_RM), A_k)\,.
\end{align*}
Since $R$ is regular, $k$ is in $\Thick(R)$ in $\dcat R$ and hence $k\lotimes_RM$ is in $\Thick(M)$ in $\dbcat{\op A}$.  Therefore the $A_k$-module $H^*(k\lotimes_RM)$ is finitely generated, that is to say, $k\lotimes_RM$ is in $\dbcat{\op A_k}$. Thus the complex 
\[
\Hom_{A_k}(\pres_{\op A}(k\lotimes_RM), A_k) = \iota_{A_k}(k\lotimes_RM)
\]
in $\KProj{A_k}$ is compact. This justifies the claim that $\pi$ preserves compacts. Along the way we have established that
\[
\pi \iota_A(M) \cong \iota_{A_k}(k\lotimes_RM) 
\]
for $M$ in $\dbcat{\op A}$.  In other words, restricted to compacts $\pi$ is the functor 
\[
k\lotimes_R-\colon \dbcat{\op A} \lra \dbcat{\op A_k}
\]
via the identification in \eqref{eq:kprojc}. The functor $\Sigma^{-d}\rho$ is left adjoint to the functor above:
\begin{align*}
\Hom_{\mathbf{D}(A)}(\Sigma^{-d}\rho N, M) 
	&\cong \Hom_{\mathbf{D}(A_k)}(\Sigma^{-d}N, \RHom_R(k,M))\\
	&\cong \Hom_{\mathbf{D}(A_k)}(\Sigma^{-d}N, \RHom_R(k,R)\lotimes_RM )\\
	&\cong \Hom_{\mathbf{D}(A_k)}(\Sigma^{-d}N, \Sigma^{-d} k\lotimes_RM) \\
	&\cong \Hom_{\mathbf{D}(A_k)}(N,  k\lotimes_RM) 
\end{align*}
The first isomorphism is standard adjunction, the second uses the fact
that $k$ is perfect as an $R$-complex, since $R$ is regular, and the
third one follows from the fact that $\RHom_R(k,R)\iso \Sigma^dk$; see
Lemma~\ref{le:regular}.

At this point we can apply Lemma~\ref{le:compacts} to deduce that $\pi_r$ also  preserves compacts and is isomorphic to the functor $\Sigma^{-d}\rho$ when restricted to compact objects. This settles both (1) and (2).

(3) The canonical map $K\otimes_Rk\to k\otimes_Rk\iso k$ induces the map 
\[
X\otimes_RK\otimes_Rk\lra X\otimes_Rk\,,
\]
and this corresponds under the adjunction isomorphism
\[
\Hom_{\bfK(A_k)}(X\otimes_RK\otimes_Rk, X\otimes_Rk)\cong \Hom_{\bfK(A)} (X\otimes_RK, \pi_r(X\otimes_Rk)) 
\]
to a natural map
\begin{equation}
\label{eq:AB-kproj}
X\otimes_RK \lra \pi_r\pi(X)\,.
\end{equation}
It suffices to prove that this is an isomorphism.  We verify this when
$X$ is compact; the general case then follows as $\pi$ and $\pi_r$
preserve arbitrary direct sums, by (1). This brings us to the
adjunction in (2). Then \eqref{eq:AB-kproj} is the map
\[
X\otimes_RK \lra X\lotimes_Rk 
\]
which is an isomorphism because the map $K\to k$ is an isomorphism in $\dcat R$; here again we are using the hypothesis that $R$ is regular. This completes the proof that \eqref{eq:unit} is the unit of the adjunction.

The claim about \eqref{eq:counit} can be verified along the same lines. 

(4) This is a direct consequence of (3). The last assertion follows
from the fact that $V$ is a finite  graded $k$-vector space with zero differential.
\end{proof}

\subsection*{Local cohomology and support}
In the proof of Theorem~\ref{th:lg-kproj}, and later on in the sequel,
we require the theory of local cohomology and support from
\cite{Benson/Iyengar/Krause:2008a}, with respect to the action of the
ring $R$ on the homotopy category of projective modules. The analogue
for the homotopy category of injective modules is described in
\cite[\S7]{Iyengar/Krause:2022a}. One could invoke that theory, for
the two homotopy categories are equivalent, at least under certain
minor additional constraints on $A$, but to keep this manuscript
self-contained we develop the needed results for $\KProj A$ directly.
 
For any pair of objects $X,Y$ in $\KProj A$ there is a natural $R$-module structure on
$\Hom_{\bfK(A)}(X,Y)$, so that $\KProj A$ is an $R$-linear triangulated
category, in the sense of \cite[\S4]{Benson/Iyengar/Krause:2008a}. In
particular, for each specialisation closed subset $V$ of $\Spec R$
there is an exact triangle
\begin{equation}
\label{eq:localisation-triangle}
\gam_V X \lra X\lra L_V X\lra 
\end{equation}
such that the object $\gam_{V}X$ is $V$-torsion and
$\Hom_{\bfK(A)}(-,L_VX)=0$ on $V$-torsion objects. Here an object $Y$ is by
definition \emph{$V$-torsion} 
if for each compact object $C$ in $\KProj A$ the $R$-module
$\Hom_{\bfK(A)}(C,Y)$ is $V$-torsion.

For any ideal  $I\subset R$ we consider the closed set
\[V(I) \colonequals\{\fp\in\Spec R\mid I\subseteq \fp\}\,.\]

\begin{lemma}
\label{le:gamma-I}
Let $I\subset R$ be an ideal and $K$ the Koszul complex on a finite generating set for the ideal $I$. For each $X$ in $\KProj A$ one has 
\[
\Loc(\gam_{V(I)}X)=\Loc(X\otimes_RK)\,.
\]
\end{lemma}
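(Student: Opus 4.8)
The plan is to show that $X \otimes_R K$ is a $V(I)$-torsion object and that the natural map $X \otimes_R K \to X$ (extended via the map $R \to K$... actually here I want the other direction) together with the defining property of the local cohomology triangle pins down $\Loc(\gam_{V(I)} X)$. Concretely, I would first establish two facts: (a) the complex $X \otimes_R K$ is $V(I)$-torsion, and (b) for every $V(I)$-torsion object $T$ in $\KProj A$ the localisation map $T \to L_{V(I)} T$ is zero, so that $T \in \Loc(\gam_{V(I)} X)$ precisely when ... no, more carefully: I want to identify $\Loc(X \otimes_R K)$ with the localising subcategory of $V(I)$-torsion objects lying over $X$ in the appropriate sense.

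Here is the cleaner route. Consider the natural map $X \otimes_R K \to X$ induced by the augmentation $K \to R$ (the Koszul complex on generators of $I$ admits such a map; its cone $C$ is built from the higher Koszul homology and is annihilated, up to finite filtration, by powers of $I$). I would argue: (i) $X \otimes_R K$ is $V(I)$-torsion, because for any compact $C \in \KProj A$ the $R$-module $\Hom_{\bfK(A)}(C, X \otimes_R K)$ is a subquotient of something built finitely from $\Hom_{\bfK(A)}(C,X)$ tensored with the finite complex $K$, hence is $I$-power-torsion — more precisely $K$ itself has homology annihilated by $I$, so $X \otimes_R K$ is an iterated extension of shifts of objects $X \otimes_R (R/\!\!\sim)$ with $I$ acting nilpotently, giving $V(I)$-torsion. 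Then (ii) I would show the cone of $X \otimes_R K \to X$ lies in the subcategory $\{ Y : \Hom_{\bfK(A)}(T, Y) = 0 \text{ for all } V(I)\text{-torsion } T\}$; this follows since tensoring with $K$ becomes an isomorphism after inverting ... here the regularity-free statement uses that $K \otimes_R R_\fp$ is exact for $\fp \notin V(I)$, so the cone is "supported away from $V(I)$". Given (i) and (ii), the triangle $X \otimes_R K \to X \to \mathrm{cone}$ is, by uniqueness of the localisation triangle \eqref{eq:localisation-triangle}, isomorphic to $\gam_{V(I)} X \to X \to L_{V(I)} X$; in particular $X \otimes_R K \cong \gam_{V(I)} X$ and the two localising subcategories they generate coincide, which is even stronger than the claimed equality.

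The one subtlety is that \eqref{eq:localisation-triangle} is characterised by $\gam_V X$ being $V$-torsion \emph{and} $\Hom(-, L_V X) = 0$ on $V$-torsion objects, so I must verify the second half for the cone, not merely that it is "supported off $V(I)$" set-theoretically. For this I would reduce to checking $\Hom_{\bfK(A)}(T, \mathrm{cone})$ for $T$ a $V(I)$-torsion compact-generated thing; using that the cone $\Sigma^{-1}\mathrm{cone}$ is an iterated extension of copies of $X \otimes_R K'$ where $K'$ is the reduced/higher part of the Koszul complex — hmm, this is exactly where one wants a clean localisation argument. An alternative, and probably the intended one, is to use the standard idелем-potence: the functor $X \mapsto X \otimes_R K$ on $\KProj A$, composed with the map from $K$, gives an idempotent approximation (the "stable Koszul complex" $\check{C}(I) = \mathrm{colim}\, K_n$ over powers, or a single $K$ suffices up to $\Loc$-equivalence), and the general formalism of \cite{Benson/Iyengar/Krause:2008a} identifies $\gam_{V(I)}$ with tensoring by this Koszul-type object. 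So concretely I would cite that $\gam_{V(I)} \one \cong$ the (stable) Koszul complex and that $\gam_{V(I)} X \cong \gam_{V(I)}\one \otimes_R X$, then observe $\Loc(\gam_{V(I)}\one \otimes_R X) = \Loc(K \otimes_R X)$ since $\gam_{V(I)}\one$ and $K$ generate the same localising subcategory of the derived category of $R$ (both have support exactly $V(I)$ and are built from each other).

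The main obstacle I anticipate is bookkeeping around whether a \emph{single} Koszul complex $K$ on a finite generating set suffices, or whether one needs the stable Koszul complex (the telescope of powers). For the equality of localising subcategories $\Loc(\gam_{V(I)}X) = \Loc(X \otimes_R K)$ — as opposed to an isomorphism of objects — the single Koszul complex is enough, because $\Loc_R(K) = \Loc_R(\text{stable Koszul cx})$ in $\dcat R$ (standard: $K$ kills $I$ on homology and its support is $V(I)$), and tensoring is exact and coproduct-preserving so it sends $\Loc_R$-relations in $R$-complexes to $\Loc$-relations among the $X \otimes_R (-)$; combined with $\gam_{V(I)} X \cong (\text{stable Koszul cx}) \otimes_R X$ from the support formalism, we are done. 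I would present it in that order: recall $\gam_{V(I)} X \cong \check{C}_I \otimes_R X$; note $\Loc_R(\check{C}_I) = \Loc_R(K)$; apply the exact coproduct-preserving functor $X \otimes_R -$; conclude.
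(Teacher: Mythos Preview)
Your first approach contains a genuine error: the claim that $X \otimes_R K \cong \gam_{V(I)} X$ as objects is false in general. For instance, with $R = \bbZ$, $I = (p)$, $A = R$, and $X = R$, one has $X \otimes_R K \simeq \bbZ/p$ (a perfect complex), whereas $\gam_{V(I)} R$ is the derived $p$-torsion, which is not perfect. Only the \emph{stable} Koszul complex computes $\gam_{V(I)}$ at the level of objects; a single $K$ does not. (Also, in the paper's conventions the natural map runs $R \to K$, hence $X \to X \otimes_R K$, not the other way; there is no augmentation $K\to R$.) You seem to sense this and back away, but it is worth being explicit that step~(ii) fails: the cofibre of $X \to X \otimes_R K$ is not $V(I)$-local, so uniqueness of the localisation triangle does not apply.

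Your final approach---compute $\gam_{V(I)} X$ as $\check{C}_I \otimes_R X$, note $\Loc_R(\check{C}_I) = \Loc_R(K)$ in $\dcat R$, and push this through $X \otimes_R -$---is correct in outline, but it needs more than you have written. You must set up an action of $\dcat R$ on $\KProj A$ via $-\otimes_R-$ and then verify that under this action the abstractly defined functor $\gam_{V(I)}$ (coming from the $R$-linear structure on $\KProj A$) agrees with $(-)\otimes_R \check{C}_I$. That compatibility is true but is itself essentially the content you are trying to prove.

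The paper sidesteps all of this. It simply observes that the Koszul complex on generators $r_1,\dots,r_n$ is an iterated mapping cone, so $X \otimes_R K$ is literally the object $\kos{X}{I}$ of \cite[\S2.5]{Benson/Iyengar/Krause:2011a}, defined purely inside the $R$-linear triangulated category $\KProj A$ as an iterated cone on multiplication maps. Then \cite[Proposition~2.11(2)]{Benson/Iyengar/Krause:2011a} gives $\Loc(\gam_{V(I)} X) = \Loc(\kos{X}{I})$ directly, in the abstract setting of any $R$-linear compactly generated triangulated category---no tensor action of $\dcat R$, no stable Koszul complex needed. Your route can be completed, but it rebuilds machinery that the cited reference has already packaged at exactly the right level of generality.
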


\begin{proof}
If $K$ is the Koszul complex on a single element $r$ in $R$, the
complex $X\otimes_RK$ is isomorphic to the mapping cone of the
morphism $X\xra{r}X$; in other words, $X\otimes_RK$ is the complex
denoted $\kos Xr$ in \cite[\S2.5]{Benson/Iyengar/Krause:2011a}.
The Koszul complex $K$ on a sequence of elements $r_1,\dots,r_n$ generating the ideal $I$ can be constructed as an iterated mapping cone, so $X\otimes_RK$ represents $\kos XI$. Thus the stated result is a special case of \cite[Proposition~2.11(2)]{Benson/Iyengar/Krause:2011a}.
\end{proof}

Fix a point $\fp$ in $\Spec R$ and consider the specialisation closed subset 
\[
Z(\fp)\colonequals \Spec R \setminus \Spec R_\fp = \{\fq\in\Spec R\mid \fq\not\subseteq \fp\}\,.
\]
The localisation functor $X\to L_{Z(\fp)}X$ models localisation at $\fp$ in the sense that for each compact object $C$ in $\KProj A$ the 
 map
\[
\Hom_{\bfK(A)}(C,X)\lra \Hom_{\bfK(A)}(C,L_{Z(\fp)}X)
\]
of $R$-modules induces an isomorphism of $R_\fp$-modules
\[
\Hom_{\bfK(A)}(C,X)_\fp\longiso \Hom_{\bfK(A)}(C,L_{Z(\fp)}X)\,.
\]
See \cite[Theorem~4.7]{Benson/Iyengar/Krause:2008a}, and also \cite[Proposition~2.3]{Benson/Iyengar/Krause:2011a}. 

The localisation functor $L_{Z(\fp)}$ admits an alternative
description which will be useful. For a complex $X$ in $\KProj A$ set
\[X_\fp\colonequals X\otimes_R R_\fp\] viewed as a complex of $A_\fp$-modules,
where $A_\fp$ denotes the $R_\fp$-algebra $A\otimes_R R_\fp$.
The assignment $X\mapsto X_\fp$ yields an adjoint pair of functors
\begin{equation}
\label{eq:localisation}
\begin{tikzcd}
    \KProj A \arrow[yshift=.5ex]{r}{\lambda} 
    	& \KProj{A_\fp}\,. \arrow[yshift=-.5ex]{l}{\lambda_r}
\end{tikzcd}
\end{equation}
The right adjoint $\lambda_r$ exists as localisation preserves coproducts.

\begin{lemma}
\label{le:loc-description}
The right adjoint $\lambda_r$ preserves coproducts. Moreover, for each
$X$ in $\KProj A$, the unit $X\to \lambda_r\lambda(X)$ of the  adjunction \eqref{eq:localisation} is naturally isomorphic to the localisation $X\to L_{Z(\fp)}(X)$, so that
\[
L_{Z(\fp)}(X)\cong \lambda_r\lambda(X)\,.
\]
\end{lemma}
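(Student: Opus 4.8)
The plan is to identify the two functors $\lambda_r\lambda$ and $L_{Z(\fp)}$ by checking that the localisation $X\to L_{Z(\fp)}(X)$ has the universal property that characterises the unit $X\to\lambda_r\lambda(X)$. Concretely, $L_{Z(\fp)}$ is a Bousfield localisation with acyclics the $Z(\fp)$-torsion objects, so it suffices to show (a) the fibre $\gam_{Z(\fp)}(X)$ of $X\to L_{Z(\fp)}(X)$ is killed by $\lambda$, i.e.\ $\lambda$ inverts the map $X\to L_{Z(\fp)}(X)$; and (b) $L_{Z(\fp)}(X)$ is $\lambda$-local, meaning the unit $L_{Z(\fp)}(X)\to\lambda_r\lambda(L_{Z(\fp)}(X))$ is an isomorphism, equivalently $\Hom_{\bfK(A)}(C, L_{Z(\fp)}(X))$ already ``is its own localisation at $\fp$'' for compact $C$. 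Given (a) and (b), the composite $X\to L_{Z(\fp)}(X)\to\lambda_r\lambda(L_{Z(\fp)}(X))\xleftarrow{\sim}\lambda_r\lambda(X)$ exhibits $L_{Z(\fp)}(X)$ together with the localisation map as the unit, by uniqueness of Bousfield localisations.

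First I would establish that $\lambda$ preserves compact objects: arguing as in Lemma~\ref{le:unit}(1), a compact object of $\KProj A$ has the form $\iota_A(M)=\Hom_A(\pres_{\op A}M,A)$, and base change along $R\to R_\fp$ gives $\lambda\iota_A(M)\cong\iota_{A_\fp}(M_\fp)$, which lies in $\KProjc{A_\fp}$ via \eqref{eq:kprojc} since $M_\fp\in\dbcat{\op A_\fp}$. By Lemma~\ref{le:compacts} this is equivalent to $\lambda_r$ preserving coproducts, which gives the first assertion. (Alternatively one invokes that localisation is exact and commutes with coproducts on the level of modules.) Next, for a compact $C$ the defining adjunction identity
\[
\Hom_{\bfK(A)}(C,\lambda_r\lambda(X)) \cong \Hom_{\bfK(A_\fp)}(\lambda(C),\lambda(X))
\]
together with the fact that morphisms in a homotopy category of complexes over an $R$-algebra localise, namely $\Hom_{\bfK(A)}(C,X)_\fp\cong\Hom_{\bfK(A_\fp)}(C_\fp,X_\fp)$ for $C$ bounded complex of finitely generated projectives (this is the usual ``$\Hom$ commutes with localisation for finitely presented source'' statement applied degreewise, using that the complexes are bounded), shows that the unit $X\to\lambda_r\lambda(X)$ induces on $\Hom_{\bfK(A)}(C,-)$ precisely the localisation map $\Hom_{\bfK(A)}(C,X)\to\Hom_{\bfK(A)}(C,X)_\fp$. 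But this is exactly the property \eqref{eq:localisation-triangle}ff.\ recorded for $X\to L_{Z(\fp)}(X)$, citing \cite[Theorem~4.7]{Benson/Iyengar/Krause:2008a} and \cite[Proposition~2.3]{Benson/Iyengar/Krause:2011a}. Since a morphism in a compactly generated triangulated category is determined, up to canonical isomorphism under $X$, by the natural transformation it induces on $\Hom_{\bfK(A)}(C,-)$ over all compact $C$, we conclude $\lambda_r\lambda(X)\cong L_{Z(\fp)}(X)$ compatibly with the maps from $X$.

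I expect the main obstacle to be the bookkeeping in the identification $\Hom_{\bfK(A)}(C,X)_\fp\cong\Hom_{\bfK(A_\fp)}(C_\fp,X_\fp)$ and in verifying that it is the \emph{same} natural map as the one in \cite[Theorem~4.7]{Benson/Iyengar/Krause:2008a}: one must be careful that ``localisation of the $\Hom$ $R$-module'' really agrees with ``$\Hom$ after base change'', which needs $C$ to be a bounded complex of finitely generated projectives (true, as $C$ is compact) so that $\Hom_A(C,-)$ commutes with the flat base change $-\otimes_R R_\fp$ degreewise and hence on cohomology of the $\Hom$-complex. Everything else is soft: preservation of coproducts by $\lambda_r$ is Lemma~\ref{le:compacts}, and the uniqueness of the Bousfield localisation functor makes the final identification formal. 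No regularity hypothesis on $R$ is needed here, in contrast with Lemma~\ref{le:unit}.
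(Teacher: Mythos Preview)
Your overall strategy coincides with the paper's: verify that for each compact $C$ the unit $X\to\lambda_r\lambda(X)$ induces, on $\Hom_{\bfK(A)}(C,-)$, the localisation map at $\fp$, and then conclude by compact generation. The argument that $\lambda$ preserves compacts, hence $\lambda_r$ preserves coproducts, is also the same.

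There is, however, a genuine gap in your verification of the isomorphism $\Hom_{\bfK(A)}(C,X)_\fp\cong\Hom_{\bfK(A_\fp)}(C_\fp,X_\fp)$. You assert that a compact object $C$ in $\KProj A$ is a \emph{bounded} complex of finitely generated projectives and then argue degreewise. This is false in general. Via \eqref{eq:kprojc} a compact object has the form $C=\Hom_A(\pres_{\op A}M,A)$ for some $M\in\dbcat{\op A}$; when $M$ has infinite projective dimension---which is the typical situation here, e.g.\ for $A=kG$ with $p\mid |G|$---the resolution $\pres_{\op A}M$ is unbounded, and so is $C$. Your ``$\Hom$ commutes with flat base change degreewise'' argument then runs into an infinite product in each degree of the $\Hom$-complex, and localisation at $\fp$ need not commute with that product for arbitrary $X$.

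The paper closes this gap with one extra reduction you omit: since $C$ is compact and the functors $\lambda$, $\lambda_r$, and $(-)_\fp$ on $R$-modules all preserve coproducts, both sides of the map $\Hom_{\bfK(A)}(C,X)_\fp\to\Hom_{\bfK(A)}(C,\lambda_r\lambda X)$ commute with coproducts in $X$, so one may assume $X$ is compact as well. Then \eqref{eq:kprojc} applied to both $C$ and $X$ translates the claim into the derived category, where it becomes the standard isomorphism $\Hom_{\mathbf D(\op A)}(M,N)_\fp\cong\Hom_{\mathbf D(\op{A_\fp})}(M_\fp,N_\fp)$ for $M,N\in\dbcat{\op A}$. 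Adding this reduction step repairs your argument.
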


\begin{proof}
Observe that the functor $\lambda$ also preserves compact objects, hence its right adjoint $\lambda_r$ preserves coproducts.   
As to the second claim, it suffices to verify that for any compact object $C$ in $\KProj A$ the map
\[
\Hom_{\bfK(A)}(C,X) \lra \Hom_{\bfK(A)}(C,\lambda_r\lambda(X)) 
\]
induced by the unit is localisation at $\fp$. Adjunction gives an isomorphism
\[
\Hom_{\bfK(A)}(C,\lambda_r\lambda(X)) \cong \Hom_{\bfK(A_\fp)}(\lambda C,\lambda(X)) = \Hom_{\bfK(A_\fp)} (C_\fp,X_\fp)
\]
 of $R$-modules, so the module on the left is $\fp$-local. Thus one gets an induced map
\[
\Hom_{\bfK(A)}(C,X)_\fp \lra \Hom_{\bfK(A)}(C,\lambda_r\lambda(X)) 
\]
and the desired result is that this is an isomorphism. Since $C$ is compact, and localisation at $\fp$, and the functors $\lambda_r$ and $\lambda$ preserve coproducts,  it suffices to verify the map above is an isomorphism when $X$ is also compact. Consider again  the adjunction isomorphism
\[
\Hom_{\bfK(A)}(C,\lambda_r\lambda X)  \cong \Hom_{\bfK(A_\fp)} (C_\fp, X_\fp)\,.
\]
Since $C_\fp$ and $X_\fp$ are compact in $\KProj{A_\fp}$, by the description of compact objects in $\KProj A$, the desired result is that for $M,N$ in $\dbcat{\op A}$, the map
\[
\Hom_{\mathbf D(\op A)}(M,N)_\fp \lra \Hom_{\mathbf D(\op A_\fp)}(M_\fp,N_\fp)
\]
is an isomorphism. But this is clear. 
\end{proof}

The  \emph{local cohomology functor} at $\fp$ is the functor
$\gam_\fp$ on $\KProj A$ given by
\begin{equation}
\label{eq:gammap}
\gam_{\fp}(X) \colonequals \gam_{V(\fp)}L_{Z(\fp)}(X) \cong \lambda_{r} \gam_{V(\fp R_{\fp})}(X_\fp)\,,
\end{equation}
where the isomorphism is the one from Lemma~\ref{le:loc-description}.

The local cohomology functors reduce the description of
localising subcategories to a local problem,
because the \emph{local-to-global theorem} 
says that
\begin{equation}\label{eq:local-to-global}
  \Loc(X)=\Loc(\{\gam_{\fp}X\mid \fp\in\Spec R\})\quad\text{for
  }X\in\KProj A;
\end{equation}
see \cite[\S3]{Benson/Iyengar/Krause:2011a} and also
\cite[Theorem~6.9]{Stevenson:2013a}.

\begin{proof}[Proof of Theorem~\ref{th:lg-kproj}]
The implication (1)$\Rightarrow$(2) is clear since for each $\fp$ in
$\Spec R$ the functor given by  $X\mapsto \fibre X\fp$ is exact and preserves all coproducts.

(2)$\Rightarrow$(1) Let $X,Y$ be objects in $\KProj A$.  By the
local-to-global theorem~\eqref{eq:local-to-global} it suffices to verify for each 
$\fp\in\Spec R$ that  $\fibre X\fp\in\Loc(\fibre Y{\fp})$ in
$\KProj{\fibre A\fp}$ implies $\gam_{\fp} X\in\Loc(\gam_{\fp}Y)$ in
$\KProj A$.

We denote by $K$ the Koszul complex on a minimal generating set for
the maximal ideal $\fp R_\fp$ of $R_\fp$. We have
\[\fibre X\fp=X_\fp\otimes_{R_\fp}k(\fp R_\fp)\]
and therefore $\fibre X\fp\in\Loc(\fibre Y{\fp})$ implies
\[X_\fp\otimes_{R_\fp} K\in\Loc(Y_\fp\otimes_{R_\fp} K)\quad
\text{in}\quad\KProj {A_\fp}\] by Lemma~\ref{le:unit}. This means
\[\gam_{V(\fp R_\fp)}(X_\fp)\in\Loc(\gam_{V(\fp R_\fp)}(Y_\fp))\quad\text{in}\quad\KProj{A_\fp}\] by Lemma~\ref{le:gamma-I}. It remains to apply
the functor $\lambda_r$. Thus
\[\lambda_r\gam_{V(\fp R_\fp)}(X_\fp)\in\Loc(\lambda_r\gam_{V(\fp R_\fp)}(Y_\fp))\quad
  \text{in}\quad\KProj A\,.\qedhere
 \]
\end{proof}

\begin{chunk}
\label{ch:lg-finite-version}
The derived category $\dcat A$ identifies with a localising subcategory of $\KProj A$ via the left adjoint of the canonical functor  $\KProj A\to\dcat A$. Thus Theorem~\ref{th:lg-kproj} implies the analogous  description of localising subcategories of $\dcat A$ from
\cite{Gnedin/Iyengar/Krause:2022a}. Here is another noteworthy consequence.

\begin{corollary}
\label{co:lg-finite-version}
Let $R$ be a regular ring, $A$ a finite projective $R$-algebra, and $M,N$ in $\dbcat A$. The following conditions are equivalent.
\begin{enumerate}[\quad\rm(1)]
\item
 $M\in  \Thick(N)$ in $\dbcat A$;
\item
 $\fibre M\fp\in \Thick(\fibre N{\fp})$ in $\dbcat{\fibre A\fp}$ for each $\fp\in \Spec R$.
\end{enumerate}
\end{corollary}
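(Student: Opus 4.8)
The plan is to reduce the assertion to Theorem~\ref{th:lg-kproj}, applied to the finite projective $R$-algebra $\op A$, by passing to the homotopy category of projectives and using the standard fact that, in a compactly generated triangulated category, a compact object lying in the localising subcategory generated by a compact object $C$ already lies in $\Thick(C)$; see \cite{Neeman:1992a}. Along the way one has to match the fibre functor on compact objects with the derived fibre $-\lotimes_Rk(\fp)$ on $\dbcat A$, and that compatibility is the only point that requires an argument.

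First I would spell out what $\fibre M\fp$ means for $M\in\dbcat A$: since $A$ is noetherian, $M$ is represented by a bounded-above complex of finitely generated projective $A$-modules, which are finitely generated projective as $R$-modules, so $\fibre M\fp=M\otimes_Rk(\fp)$ computes $M\lotimes_Rk(\fp)$. As $R$ is regular, so is the local ring $R_\fp$, hence $k(\fp)$ is perfect over $R_\fp$; therefore $\fibre M\fp\cong k(\fp)\lotimes_{R_\fp}M_\fp$ lies in $\Thick(M_\fp)$, and in particular in $\dbcat{\fibre A\fp}$, as asserted in the statement.

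Next the main reduction. By \eqref{eq:kprojc} for $\op A$ there is an equivalence of triangulated categories $\iota_{\op A}\colon\op{\dbcat A}\iso\KProjc{\op A}$; since an equivalence preserves thick subcategories and since passing to the opposite does not change which objects belong to the thick subcategory generated by a given one, $M\in\Thick(N)$ in $\dbcat A$ if and only if $\iota_{\op A}M\in\Thick(\iota_{\op A}N)$ in $\KProjc{\op A}$. As $\iota_{\op A}N$ is compact in $\KProj{\op A}$, the quoted theorem of Neeman shows this is equivalent to $\iota_{\op A}M\in\Loc(\iota_{\op A}N)$ in $\KProj{\op A}$, and then Theorem~\ref{th:lg-kproj} (for the finite projective $R$-algebra $\op A$) rephrases it as: $(\iota_{\op A}M)_{k(\fp)}\in\Loc\bigl((\iota_{\op A}N)_{k(\fp)}\bigr)$ in $\KProj{(\op A)_{k(\fp)}}$ for every $\fp\in\Spec R$, where $(\op A)_{k(\fp)}=\op{\fibre A\fp}$.

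It remains to identify $(\iota_{\op A}M)_{k(\fp)}$ with $\iota_{\op{\fibre A\fp}}(\fibre M\fp)$, naturally in $M$. This is the global form of the isomorphism $\pi\iota_A(-)\cong\iota_{A_k}(k\lotimes_R-)$ obtained in the course of proving Lemma~\ref{le:unit}(1); I would deduce it by first localising at $\fp$ through the functor $\lambda$ of \eqref{eq:localisation} — which preserves compacts and satisfies $\lambda\iota_{\op A}(M)\cong\iota_{\op{(A_\fp)}}(M_\fp)$ — and then applying that local statement over the regular local ring $R_\fp$. Granting this identification, the equivalence $\iota_{\op{\fibre A\fp}}$ together with Neeman's theorem at the fibre — where $\fibre A\fp$ is again noetherian on both sides — turns the displayed condition into $\fibre M\fp\in\Thick(\fibre N\fp)$ in $\dbcat{\fibre A\fp}$ for every $\fp$, which is exactly (2). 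The main obstacle is thus this compatibility of $\iota$ with the fibre functors; I expect a short computation in the style of the proof of Lemma~\ref{le:unit}, the only care being to localise at $\fp$ before passing to the residue field so that regularity is invoked over $R_\fp$.
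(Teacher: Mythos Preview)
Your proposal is correct and follows essentially the same route as the paper: the paper's proof is a two-line appeal to Theorem~\ref{th:lg-kproj} for $\op A$ together with the equivalence \eqref{eq:kprojc} and Neeman's result that $\Thick(C)=\Loc(C)\cap{\cat T}^{\mathrm c}$ for compact $C$ (the paper cites \cite[Lemma~2.2]{Neeman:1992b} rather than \cite{Neeman:1992a}). The compatibility $(\iota_{\op A}M)_{k(\fp)}\cong\iota_{\op{\fibre A\fp}}(\fibre M\fp)$ that you carefully isolate is indeed needed but is tacitly taken for granted in the paper, having been established in the proof of Lemma~\ref{le:unit}(1) in the local case; your reduction via $\lambda$ to that local statement is exactly right.
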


\begin{proof}
For compact objects $X,Y$ in any compactly generated triangulated category, one has $X\in\Thick(Y)$ if and only if $X\in \Loc(Y)$; see, for instance, \cite[Lemma~2.2]{Neeman:1992b}. Thus the desired result is an immediate consequence of Theorem~\ref{th:lg-kproj} and equivalence \ref{eq:kprojc}, applied to $\op A$.
\end{proof}
\end{chunk}

The preceding result applied with $N=A$ implies that $M$ is perfect if
and only if it is fibrewise perfect. Here is a more precise result, for later use.

\begin{lemma}
\label{le:projective-fibrewise}
Let $R$ be a commutative noetherian ring, $A$ a finite projective $R$-algebra, and $M$ a finitely generated $A$-module. When $M$ is projective as an $R$-module, there is an equality
\[
\projdim_AM = \sup\{\projdim_{\fibre A{\fp}}{\fibre M{\fp}}\mid \fp\in \Spec R\}\,.
\]
Moreover, it suffices to take the supremum over the maximal ideals in $R$.
\end{lemma}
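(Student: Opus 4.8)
The plan is to reduce the asserted equality to the case where $R$ is local, and there to a single statement: projectivity of an $R$-free module can be detected on the closed fibre. The inequality "$\geq$" will come essentially for free from base change along a resolution.

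First I would reduce to $R$ local. Since $M$ is finitely generated over the noetherian ring $A$, its projective dimension is computed from $\Ext$, and for finitely generated $M$ one has $\Ext_A^n(M,N)_\fm\cong\Ext_{A_\fm}^n(M_\fm,N_\fm)$ for every maximal ideal $\fm\subset R$ and every $A$-module $N$ (localisation is exact and commutes with $\Hom$ out of a finitely generated module). As every $A_\fm$-module is of the form $N_\fm$, this gives $\projdim_A M=\sup_\fm\projdim_{A_\fm}M_\fm$ over the maximal ideals of $R$. Now $A_\fm$ is again a finite projective algebra, over the local ring $R_\fm$, and $\fibre{(M_\fm)}{\fm}=\fibre M\fm$, so it suffices to prove the equality when $R$ is local with residue field $k$, with the supremum attained at the closed point. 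For the inequality $\projdim_{\fibre A\fp}\fibre M\fp\le\projdim_A M$, valid for every $\fp$: in a resolution $P_\bullet\to M$ by finitely generated projective $A$-modules, every syzygy $\Omega^i_AM$ is $R$-projective, by induction, since $0\to\Omega^{i+1}_AM\to P_i\to\Omega^i_AM\to 0$ is $R$-split (its cokernel $\Omega^i_AM$ being $R$-projective) and the $P_i$ are $R$-projective; hence $P_\bullet\otimes_Rk(\fp)\to\fibre M\fp$ stays exact and is a resolution of the same length by finitely generated projective $\fibre A\fp$-modules.

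Next I would isolate the local core. Assume $R$ is local with residue field $k$ and $\projdim_{A_k}M_k=n<\infty$ (otherwise there is nothing to prove). By the previous step $\Omega^n_AM$ is $R$-free and $(\Omega^n_AM)\otimes_Rk\cong\Omega^n_{A_k}(M_k)$ is $A_k$-projective, so everything reduces to the lemma: \emph{if $R$ is local with residue field $k$ and $L$ is a finitely generated $A$-module that is $R$-free with $L\otimes_Rk$ projective over $A\otimes_Rk$, then $L$ is $A$-projective.} To prove it, choose a surjection $\phi\colon F=A^r\twoheadrightarrow L$ and put $K=\ker\phi$, which is finitely generated over $R$ since $R$ is noetherian. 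Because $L$ is $R$-free, $0\to K\xrightarrow{\iota}F\to L\to 0$ is $R$-split, so $K$ is $R$-free and $0\to K_k\to F_k\to L_k\to 0$ is exact, hence split over $A_k$ as $L_k$ is $A_k$-projective. Lift the retraction $F_k\to K_k$ along the surjection $\Hom_A(F,K)\twoheadrightarrow\Hom_{A_k}(F_k,K_k)$ — surjective because $F$ is $A$-free, so this map is just $-\otimes_Rk$ on $K^{r}$ — to an $A$-linear $\sigma\colon F\to K$. Then $\sigma\iota\colon K\to K$ reduces modulo $\fm$ to $\id_{K_k}$, so it is surjective by Nakayama's lemma; and a surjective endomorphism of a noetherian module is injective. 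Thus $\sigma\iota$ is an automorphism of $K$, so $\iota$ is $A$-split and $L$ is a direct summand of $F$, hence projective.

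Assembling, the lemma gives $\projdim_A M\le n=\projdim_{A_k}M_k$ when $R$ is local, and together with the easy inequality $\projdim_A M=\projdim_{A_k}M_k$ in that case; feeding this back through the first step yields the global equality and shows the supremum is already attained over the maximal ideals of $R$. The main obstacle is the isolated lemma — specifically the passage from "$L\otimes_Rk$ splits off $F_k$" to "$L$ splits off $F$", handled by the lift-and-invert argument; this is exactly where $R$-freeness of $L$ (which makes the presentation $R$-split, so that it survives $-\otimes_Rk$) is essential. Everything else is routine localisation of $\Ext$ and syzygy bookkeeping.
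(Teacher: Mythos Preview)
Your proof is correct, but the route differs from the paper's. Both arguments begin by localising to reduce to the case where $R$ is local with residue field $k$, and both need only the closed fibre there. At that point the paper invokes an external characterisation of projective dimension over the semi-local ring $A$: namely $\projdim_A M=\max\{i\mid\Ext^i_A(M,L)\ne 0\text{ for some simple }L\}$, and then observes that since every simple $A$-module $L$ is an $A_k$-module and $M$ is $R$-projective, adjunction gives $\Ext^i_A(M,L)\cong\Ext^i_{A_k}(M_k,L)$; the desired equality drops out immediately. Your argument instead proves the Nakayama-style lifting lemma directly: an $R$-free $A$-module whose closed fibre is $A_k$-projective is already $A$-projective, by lifting a splitting from $A_k$ and showing the resulting endomorphism of the kernel is an automorphism. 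Your approach is more elementary and self-contained, avoiding the cited result on projective dimension over semi-local rings; the paper's is shorter once that result is in hand and makes the role of the simple modules transparent.
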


\begin{proof}
Even without the hypothesis that $M$ is finite projective over $R$, one has
\[
\projdim_AM = \sup\{\projdim_{A_\fp}{ M_\fp}\mid \fp\in \Spec R\}
\]
by \cite[Corollary~III.6.6]{Bass:1968a}. Thus replacing $R$, $A$, and $M$ by their localisations at $\fp$ we can assume $R$ is a local ring, say with maximal ideal $\fm$ and residue field $k$. Then the desired result is that
\[
\projdim_AM = \projdim_{A_k}{M_k}\,.
\]
Since $A$ is semi-local, \cite[Proposition~A.1.5]{Avramov/Iyengar:2021a} yields
\[
\projdim_AM =\max_{1\le j\le r}\{i\in\mathbb{N} \mid \Ext^i_A(M,L_j)\ne 0\}
\]
where $L_1,\dots,L_r$ are the simple $A$-modules. The $L_j$ are modules over $A_k\colonequals A/\fm A$ and $M$ is projective over $R$, so adjunction yields isomorphisms
\[
\Ext^i_A(M,L_j) \cong \Ext^i_{A_k}(M_k,L_j)\,.
\]
Since the $L_j$ are the simple modules over $A_k$ the desired result follows.
\end{proof}

\section{Gorenstein algebras}
\label{se:Gorenstein}
Let $R$ be a commutative noetherian ring. Following~\cite{Gnedin/Iyengar/Krause:2022a, Iyengar/Krause:2022a}, we say $A$ is a \emph{Gorenstein} $R$-algebra when it is a finite projective $R$-algebra such that for each $\fp$ in $\supp_RA$ the ring $A_\fp$ has finite injective dimension on the left and on the right; that is to say, it is Iwanaga--Gorenstein.  When this holds $R_\fp$ is Gorenstein for each $\fp$ in $\supp_RA$.  Here is a characterisation of the Gorenstein property that is in the spirit of this work; see also \cite[Theorem~6.8]{Gnedin/Iyengar/Krause:2022a}.

\begin{proposition}
\label{pr:gor=fibre}
Let $R$ be a commutative Gorenstein ring and $A$ a finite projective $R$-algebra. Then $A$ is Gorenstein if and only the finite dimensional algebra $\fibre A{\fp}$ is Iwanaga--Gorenstein for each $\fp$ in $\Spec R$.
\end{proposition}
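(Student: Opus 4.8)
The plan is to prove both implications by reducing to the local situation, where the key input is the standard theory of Bass numbers and injective dimension over semilocal Noetherian rings, together with the relationship between the injective dimension of $A$ over itself and over $R$. Throughout one uses that, since $A$ is finite projective over $R$, for any $\fp$ the fibre $\fibre A{\fp}$ is a finite-dimensional algebra over the field $k(\fp)$, and that flatness/projectivity over $R$ makes $\Ext$ and $\Tor$ computations interact well with change of rings. One should also fix from the outset that $\supp_R A$ is a closed subset of $\Spec R$ (since $A$ is finitely generated over $R$), and that for $\fp \notin \supp_R A$ both $A_\fp$ and $\fibre A{\fp}$ vanish, so the condition is vacuous there; hence one may restrict attention to $\fp \in \supp_R A$, where $R_\fp$ is then automatically Gorenstein (given the hypothesis that $R$ itself is Gorenstein).

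For the forward direction, suppose $A$ is Gorenstein, so $A_\fp$ has finite injective dimension on both sides for each $\fp \in \supp_R A$. Fix such a $\fp$; after localising we may assume $(R,\fm,k)$ is local Gorenstein of dimension $d$ and $A = A_\fp$ has finite injective dimension as a left (and right) module over itself. I would then compute the injective dimension of $\fibre A{\fp} = A \otimes_R k = A/\fm A$ as a module over itself. The clean way is to use the adjunction isomorphism $\Ext^*_{A_k}(M_k, L) \cong \Ext^*_A(M, L)$ for $A_k$-modules $L$ and $A$-modules $M$ that are $R$-projective, exactly as in the proof of Lemma~\ref{le:projective-fibrewise}, combined with the fact that over a semilocal ring injective dimension is detected by the simple modules. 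More precisely, $\injdim_{A_k} A_k$ is finite iff $\Ext^i_{A_k}(L_j, A_k) = 0$ for $i \gg 0$ and all simple $A_k$-modules $L_j$; and one relates $\Ext^*_{A_k}(L_j, A_k)$ to $\Ext^*_A(L_j, A)$ via a change-of-rings spectral sequence using $\RHom_R(k,R) \simeq \Sigma^{-d} k$ (Lemma~\ref{le:regular}(2)). Since $A$ has finite injective dimension over itself, $\Ext^i_A(L_j, A)$ vanishes for large $i$, and the spectral sequence forces $\Ext^i_{A_k}(L_j, A_k)$ to vanish for large $i$ as well; symmetrically on the right. Hence $\fibre A{\fp}$ is Iwanaga--Gorenstein.

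For the converse, assume $\fibre A{\fp}$ is Iwanaga--Gorenstein for every $\fp \in \Spec R$; we must show $A_\fp$ has finite injective dimension over itself for each $\fp \in \supp_R A$. Again localise to assume $(R,\fm,k)$ is local Gorenstein of dimension $d$ and write $A$ for $A_\fp$. One wants $\injdim_A A < \infty$, and over the semilocal ring $A$ it suffices to bound $\sup\{ i : \Ext^i_A(L_j, A) \neq 0\}$ over the (finitely many) simple modules $L_j$, all of which are $A_k$-modules. Running the same change-of-rings comparison in reverse: $\RHom_A(L_j, A) \simeq \RHom_{A_k}(L_j, \RHom_R(k, A)) $, and since $R$ is Gorenstein local of dimension $d$ one has $\RHom_R(k, A) \simeq \RHom_R(k,R) \lotimes_R A \simeq \Sigma^{-d}(k \lotimes_R A) = \Sigma^{-d} A_k$, using that $A$ is $R$-flat. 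Therefore $\Ext^i_A(L_j, A) \cong \Ext^{i-d}_{A_k}(L_j, A_k)$, which vanishes for $i \gg 0$ because $\fibre A{\fp} = A_k$ has finite injective dimension over itself. This bounds $\injdim_A A$, and the symmetric argument handles the right-module side, so $A$ is Iwanaga--Gorenstein.

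The main obstacle I anticipate is making the change-of-rings identification $\RHom_A(L_j, A) \simeq \Sigma^{-d}\RHom_{A_k}(L_j, A_k)$ fully rigorous: one must justify the isomorphism $\RHom_R(k,A) \simeq \Sigma^{-d} A_k$ in the derived category of $A_k$-$A$-bimodules (or at least compatibly with the $A_k$-action so that the subsequent $\RHom_{A_k}$ makes sense), and one must be careful that "finite injective dimension detected by simple modules" is being applied to the correct (semilocal, Noetherian) rings $A$ and $A_k$ — here the reference \cite[Proposition~A.1.5]{Avramov/Iyengar:2021a} used in Lemma~\ref{le:projective-fibrewise}, or its injective-dimension analogue, is the right tool. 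A secondary subtlety is the passage between the global statement and the local one: one should note that although injective dimension of $A$ as a module need not localise as naively as projective dimension, finiteness of $\injdim_{A_\fp} A_\fp$ for all $\fp$ is exactly the definition of the Gorenstein property being used, so no global-to-local comparison beyond the definition is actually needed. Once these points are pinned down, both implications follow from the displayed $\Ext$ computations.
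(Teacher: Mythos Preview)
Your argument is correct but takes a different route from the paper. The paper's proof is essentially a two-liner given its machinery: it invokes the characterisation from \cite[Theorem~4.6]{Iyengar/Krause:2022a} that $A$ is Gorenstein if and only if the dualising bimodule $\omega_{A/R}=\Hom_R(A,R)$ is perfect on each side, then checks perfectness fibrewise (via Corollary~\ref{co:lg-finite-version}, or more directly Lemma~\ref{le:projective-fibrewise}, since $\omega_{A/R}$ is finite projective over $R$), using that $\omega_{A/R}\otimes_R k(\fp)\cong\Hom_{k(\fp)}(\fibre A\fp,k(\fp))$ is perfect on each side precisely when $\fibre A\fp$ is Iwanaga--Gorenstein. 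Your approach instead produces the explicit shift formula $\Ext^i_A(L,A)\cong\Ext^{i-d}_{A_k}(L,A_k)$ for simple $L$, which is more hands-on and in fact yields the sharper conclusion $\injdim_A A = d + \injdim_{A_k}A_k$ whenever $R$ is Gorenstein local of dimension $d$---this generalises the computation behind Proposition~\ref{pr:IG} beyond the fibrewise self-injective case. Two small corrections: the identity $\RHom_R(k,R)\simeq\Sigma^{-d}k$ you invoke is the local Gorenstein condition itself, not Lemma~\ref{le:regular}(2), which is stated only for regular $R$; and the step $\RHom_R(k,A)\simeq\RHom_R(k,R)\lotimes_R A$ requires $A$ to be finite projective (hence perfect) over $R$, not merely flat. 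The injective-dimension-via-simples fact you flag is exactly \cite[Lemma~B.3.1]{Buchweitz:2021a}, as used in the proof of Proposition~\ref{pr:IG}.
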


\begin{proof}
By \cite[Theorem~4.6]{Iyengar/Krause:2022a}, the $R$-algebra $A$ is Gorenstein if and only if the $A$-bimodule $\Hom_R(A,R)$ is perfect on both sides. Since the $A$-module $\Hom_R(A,R)$ is finitely generated on both sides, it is perfect if and only if the $\fibre A{\fp}$-module
\[
\fibre {\Hom_R(A,R)}{\fp} \cong \Hom_{k(\fp)}(\fibre A{\fp},k(\fp))
\]
is perfect on both sides, for each $\fp$ in $\Spec R$; this follows from Corollary~\ref{co:lg-finite-version} applied with $M\colonequals \Hom_R(A,R)$ and $N\colonequals A$. It remains to observe that this latter condition is equivalent to $\fibre A{\fp}$ being Iwanaga--Gorenstein.
\end{proof}

\begin{chunk}
One consequence of the Gorenstein condition is that complexes in $\KacProj A$ are \emph{totally acyclic}, namely each complex $X\in \KacProj A$ satisfies
\[
\Hom_{\bfK(A)}(X,P)=0 \quad\text{for any projective $A$-module $P$.}
\]
See \cite[Theorem~5.6]{Iyengar/Krause:2022a} for a proof.  The functors 
\[
\KacProj A\xra{\mathrm{incl}} \KProj A\xra{\ \bfq\ }\dcat A
\]
induce a recollement of triangulated categories
\begin{equation}
\label{eq:recollement}
\begin{tikzcd}
\KacProj A   \arrow[hookrightarrow]{rr}[description]{\mathrm{incl}} && \KProj A
  \arrow[twoheadrightarrow,yshift=-1.5ex]{ll}
  \arrow[twoheadrightarrow,yshift=1.5ex]{ll}[swap]{\bft}
  \arrow[twoheadrightarrow]{rr}[description]{\bfq} &&\dcat A
  \arrow[tail,yshift=-1.5ex]{ll}
  \arrow[tail,yshift=1.5ex]{ll}[swap]{\pres}
\end{tikzcd}
\end{equation}
The functor $\bft$, left adjoint to the inclusion of the acyclic
complexes of projectives, associates to each complex its
\emph{complete resolution}.

From \cite[Theorem~4.6]{Iyengar/Krause:2022a} one gets an equivalence:
\[
\RHom_A(-,A)\colon {\dbcat A}^{\mathrm{op}} \longiso \dbcat {\op A}\,.
\] 
Composing this with the equivalence  \eqref{eq:kprojc} yields a canonical equivalence
\begin{equation}
\label{eq:Gorenstein-kprojc}
\dbcat A \longiso \KProjc A\,.
\end{equation}
\end{chunk}

\subsection*{Gorenstein projective modules}
An $A$-module $M$ is \emph{Gorenstein projective} if it occurs as a
syzygy module in an acyclic complex of projective $A$-modules. Thus,
there is some $X\in \KacProj A$ such that $M\cong\Coker(d^{-1}_X)$. We
write $\GProj A$ for the full subcategory of $\Mod A$ consisting of
the Gorenstein projective modules, and $\Gproj A$ for its subcategory
of finitely generated modules. Both these are Frobenius categories,
with projective and injective objects the projective modules in the
corresponding categories; see for example
\cite[Proposition~7.2]{Krause:2005a}. The corresponding stable
categories are denoted $\uGProj A$ and $\uGproj A$, respectively. The
first part of the result below was proved by
Buchweitz~\cite[Theorem~4.4.1]{Buchweitz:2021a} when $A$ is
Iwanaga--Gorenstein, but the same argument carries over to this
context.

\begin{theorem}
\label{th:Buchweitz-big}
The assignment $X\mapsto \Coker(d^{-1}_X)$ induces an equivalence of
$R$-linear triangulated categories $\KacProj A\iso \uGProj
A$. Moreover, these categories are compactly generated, and $\uGproj A$
identifies with the full subcategory of compact objects of
$\uGProj A$.
\end{theorem}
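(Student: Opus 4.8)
The plan is to imitate Buchweitz's argument, checking that the only input it needs beyond the Frobenius structure of $\GProj A$ is the total acyclicity of complexes in $\KacProj A$, which is available here by the Gorenstein hypothesis. First I would observe that the cokernel functor $\Coker(d_X^{-1})$ sends a complex $X\in\KacProj A$ to a Gorenstein projective module, essentially by definition, and that it is additive and sends null-homotopic complexes to projective modules; hence it factors through a functor $\KacProj A\to\uGProj A$. That this functor is triangulated is a formal check: a short exact sequence of complexes of projectives $0\to X'\to X\to X''\to 0$ that is split in each degree gives, on cokernels of the $(-1)$st differentials, an exact sequence $0\to \Coker(d_{X'}^{-1})\to \Coker(d_X^{-1})\to \Coker(d_{X''}^{-1})\to 0$ which, being an exact sequence in the Frobenius category $\GProj A$, is a conflation and therefore a triangle in $\uGProj A$; one then matches the suspension (shift of complex versus cosyzygy in the Frobenius category).

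The heart is showing the functor is an equivalence. For full faithfulness I would use the total acyclicity established in the excerpt: for $X,Y\in\KacProj A$, any $A$-homomorphism $\Coker(d_X^{-1})\to \Coker(d_Y^{-1})$ lifts to a chain map $X\to Y$ because $X$ consists of projectives and $Y$ is acyclic in the relevant range, and two chain maps induce the same stable map on cokernels precisely when their difference factors through a projective, which by total acyclicity (the vanishing $\Hom_{\bfK(A)}(X,P)=0$ for projective $P$, applied to the truncations) corresponds exactly to a chain homotopy; so $\Hom_{\bfK(A)}(X,Y)\cong \underline{\Hom}_A(\Coker d_X^{-1},\Coker d_Y^{-1})$. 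Essential surjectivity is the statement that every Gorenstein projective module arises as $\Coker(d^{-1}_X)$ for some complete resolution $X\in\KacProj A$, which is exactly the definition of Gorenstein projective together with the fact that one can splice an acyclic projective resolution of $M$ to a co-resolution by projectives (using that $M$ is a syzygy in an acyclic complex of projectives, so such a complex exists).

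Finally, compact generation: the recollement~\eqref{eq:recollement} realises $\KacProj A$ as a localising subcategory of the compactly generated category $\KProj A$, with $\bft$ left adjoint to the inclusion, so $\bft$ applied to a compact generating set of $\KProj A$ gives a set of compact generators of $\KacProj A$; here the Gorenstein hypothesis enters again, since it is needed for the recollement to exist. To identify the compacts with $\uGproj A$, I would combine the equivalence $\dbcat A\longiso\KProjc A$ of~\eqref{eq:Gorenstein-kprojc} with the observation that, under the cokernel equivalence, these compact objects correspond to the finitely generated Gorenstein projectives: a complete resolution of a complex in $\dbcat A$ has finitely generated terms and finitely generated cokernels, and conversely a finitely generated Gorenstein projective admits a complete resolution with finitely generated terms. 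I expect the main obstacle to be the bookkeeping in the full-faithfulness step — correctly handling truncations of the acyclic complexes so that the chain-homotopy/factoring-through-projectives dictionary is precise and the identification of shifts is consistent — rather than any conceptual difficulty, since all the substantive inputs (total acyclicity, the recollement, the description of compacts in $\KProj A$) are already in place.
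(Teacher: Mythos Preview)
Your approach is essentially correct but takes a genuinely different route from the paper. The paper's proof is a two-line deferral to the literature: it invokes the dual statement for Gorenstein injectives from \cite[Proposition~7.2]{Krause:2005a}, transports it via the equivalence $\KInj A\iso\KProj A$ of \cite[Theorem~5.6]{Iyengar/Krause:2022a}, and then cites \cite[Theorem~6.5]{Iyengar/Krause:2022a} for compact generation and the identification of compacts. Your proposal, by contrast, reruns Buchweitz's argument directly on the projective side, using total acyclicity as the sole extra input for full faithfulness and the recollement~\eqref{eq:recollement} for compact generation. Your route is more self-contained and makes clear exactly where the Gorenstein hypothesis enters; the paper's route is terser and leverages the injective--projective duality already developed in the cited works.

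One point to tighten in your argument: in identifying the compacts of $\uGProj A$ with $\uGproj A$, you show that $\bft$ sends $\KProjc A$ into objects whose zeroth syzygy is finitely generated, and conversely that every finitely generated Gorenstein projective arises this way. But the compact objects of $\KacProj A$ are a priori only the \emph{thick closure} of $\bft(\KProjc A)$, so you still need that $\uGproj A$ is closed under retracts in $\uGProj A$ (equivalently, idempotent complete). This is true in the present setting and is part of what \cite[Theorem~6.5]{Iyengar/Krause:2022a} supplies, but your sketch does not address it; you should either prove it directly or cite it.
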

\begin{proof}
  In the dual setting of Gorenstein injectives 
  the first assertion is \cite[Proposition~7.2]{Krause:2005a}. In
  fact, we have an equivalence $\KInj A\iso\KProj A$ by
  \cite[Theorem~5.6]{Iyengar/Krause:2022a} and then the second
  assertion follows from \cite[Theorem~6.5]{Iyengar/Krause:2022a}.
\end{proof}

The Gorenstein projectivity of a module is inherited by its
fibres. Without further restrictions, the converse need not hold.

\begin{lemma}
\label{le:gproj-fibre}
Let $R$ be a regular ring and $A$ a Gorenstein $R$-algebra.  If an $A$-module $M$ is Gorenstein projective, then so is the $\fibre A{\fp}$-module  $\fibre M{\fp}$ for each $\fp$ in $\supp_RA$.
\end{lemma}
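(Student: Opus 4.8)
The plan is to show that if $M$ is Gorenstein projective as an $A$-module then $\fibre M{\fp}$ is Gorenstein projective as an $\fibre A\fp$-module, for each $\fp \in \supp_R A$. Recall from the hypothesis that $R$ is regular, so for $\fp \in \supp_R A$ the localisation $R_\fp$ is a regular local ring, and $\fibre A\fp$ is a finite-dimensional algebra over the field $k(\fp)$; moreover, by Proposition~\ref{pr:gor=fibre}, $\fibre A\fp$ is Iwanaga--Gorenstein. Since all constructions localise, I may replace $R$, $A$, $M$ by $R_\fp$, $A_\fp$, $M_\fp$ and assume $(R,\fm,k)$ is a regular local ring with residue field $k$, and it suffices to show that $M_k = M \otimes_R k$ is a Gorenstein projective $A_k$-module.

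First I would use the equivalence $\KacProj A \iso \uGProj A$ from Theorem~\ref{th:Buchweitz-big}: since $M$ is Gorenstein projective, there is a totally acyclic complex $X \in \KacProj A$ of projective $A$-modules with $M \cong \Coker(d^{-1}_X)$. Applying the exact functor $\pi = (-)\otimes_R k$ produces a complex $X_k$ of projective $A_k$-modules. Because each $X^n$ is $R$-projective (being $A$-projective and $A$ being $R$-projective), there is a short exact sequence of complexes $0 \to X \xrightarrow{\fm} X \to X_k \to 0$ in the one-element case, and more generally, since $K = $ Koszul complex on a regular system of parameters is a free resolution of $k$, one has $X_k \simeq X \otimes_R K$ in the derived category; but in fact the key point is simpler: the functor $\pi$ sends $\KacProj A$ into $\KacProj{A_k}$. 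Indeed $X_k$ is visibly a complex of projective $A_k$-modules; it remains to see it is acyclic. This is where the regularity of $R$ enters decisively: $X$ is a bounded-below-free... no, rather, each $X^n$ is $R$-flat, so $\HH{n}{X_k} = \HH{n}{X \otimes_R k}$, and since $X$ is acyclic with $R$-projective terms, all the syzygies $\Coker(d^{i-1}_X)$ are $R$-projective (being Gorenstein projective $A$-modules, hence $R$-projective — this is the remark in the introduction, cf.\ Lemma~\ref{le:symmetric}); therefore the short exact sequences of syzygies stay exact after $\otimes_R k$, which forces $X_k$ to be acyclic. So $X_k \in \bfK_{\mathrm{ac}}(\Prj A_k)$, and $M_k \cong \Coker(d^{-1}_{X_k})$ is its $(-1)$-cocycle, hence a Gorenstein projective $A_k$-module by definition.

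Alternatively, and perhaps more cleanly, one can invoke Lemma~\ref{le:unit}: for $X \in \KProj A$ the object $\pi_r\pi(X) \cong X \otimes_R K$, and when $X \in \KacProj A$ one has $X \otimes_R K \simeq X \otimes_R k = X_k$ in $\dcat{A_k}$ since $K \to k$ is a quasi-isomorphism over the regular ring $R$ — but since these are honest complexes of projectives the quasi-isomorphism is a homotopy equivalence only after passing to $\dcat{}$; still, acyclicity of $X_k$ follows from acyclicity of $X \otimes_R K$, which in turn follows because $\hh{X \otimes_R K}$ is computed by a spectral sequence with $E_2$-term built from $\hh X = 0$. The hard part — though it is not really hard, just the crux — is verifying that acyclicity is preserved, and the essential input there is that the syzygy modules of a totally acyclic complex over the Gorenstein $R$-algebra $A$ are projective over $R$ (again by Lemma~\ref{le:symmetric} applied fibrewise, or directly since a finitely generated $A$-module that is $A$-Gorenstein-projective is $R$-projective when $R$ is regular), so that tensoring the defining short exact sequences $0 \to \Omega^{i+1} \to X^i \to \Omega^i \to 0$ with $k$ keeps them exact. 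Once acyclicity of $X_k$ is in hand, the conclusion is immediate from the definition of Gorenstein projective together with Theorem~\ref{th:Buchweitz-big} for $A_k$. I would also remark that the converse fails in general without a hypothesis controlling the cohomology, which motivates the finite-generation conditions imposed later.
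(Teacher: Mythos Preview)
Your argument is correct and follows essentially the same approach as the paper: take an acyclic complex $X$ of projective $A$-modules in which $M$ is a syzygy, and show that $X_{k(\fp)}$ remains acyclic by exploiting the regularity of $R$. The paper does this in one stroke by observing (Lemma~\ref{le:regular}(1)) that $X$, viewed as a complex of projective $R$-modules, is null-homotopic over $R$, so $X_{k(\fp)}$ is split-acyclic; your route through $R$-projectivity of the syzygies is equivalent to this, while the localisation step, the appeal to Proposition~\ref{pr:gor=fibre}, the reference to Lemma~\ref{le:symmetric}, and the alternative via Lemma~\ref{le:unit} and spectral sequences are all unnecessary.
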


\begin{proof}
Let $X$ be an acyclic complex of projective $A$-modules in which $M$
is a syzygy. Since $R$ is regular and  $X$ is in $\KacProj R$, it is
split-acyclic as an $R$-complex by Lemma~\ref{le:regular}. Thus $\fibre X{\fp} = X\otimes_R k(\fp) $ is also split-acyclic, and in particular acyclic. It consists of projective $\fibre A{\fp}$-modules and $\fibre M{\fp}$ is a syzygy module in it, so the latter is Gorenstein projective.
\end{proof}

In view of Theorem~\ref{th:Buchweitz-big} and the preceding
result, one gets an analogue of  Theorem~\ref{th:lg-kproj} for
Gorenstein projective modules.

\begin{theorem}
\label{th:lg-GProj}\pushQED{\qed}
Let $R$ be a regular ring and $A$  a Gorenstein $R$-algebra. For Gorenstein
projective $R$-modules $X,Y$ we have in $\uGProj A$
\[X\in \Loc(Y)\quad\iff\quad\fibre X\fp\in
  \Loc(\fibre Y{\fp}) \text{ for each }\fp\in \Spec R.\qedhere\]
\end{theorem}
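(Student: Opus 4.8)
The plan is to transport the statement along the Buchweitz-type equivalence $\KacProj A\iso\uGProj A$ of Theorem~\ref{th:Buchweitz-big} and then quote Theorem~\ref{th:lg-kproj}. First I would replace the given Gorenstein projective modules $X$ and $Y$ by complete resolutions $\widetilde X,\widetilde Y\in\KacProj A$: acyclic complexes of projective $A$-modules with $\Coker(d^{-1}_{\widetilde X})\cong X$ and $\Coker(d^{-1}_{\widetilde Y})\cong Y$. These exist and are unique up to isomorphism in $\KacProj A$ by Theorem~\ref{th:Buchweitz-big}, and since that equivalence is triangulated one has $X\in\Loc(Y)$ in $\uGProj A$ if and only if $\widetilde X\in\Loc(\widetilde Y)$ in $\KacProj A$. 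Now $\KacProj A$ is a localising subcategory of $\KProj A$, being the kernel of $\bfq$; see \eqref{eq:recollement}. Hence for $\widetilde Y\in\KacProj A$ the localising subcategory it generates inside $\KacProj A$ agrees with the one it generates inside $\KProj A$ — the latter is localising in $\KProj A$ and contained in $\KacProj A$ — so $\widetilde X\in\Loc(\widetilde Y)$ in $\KacProj A$ if and only if $\widetilde X\in\Loc(\widetilde Y)$ in $\KProj A$. By Theorem~\ref{th:lg-kproj} this last condition is equivalent to $\fibre{\widetilde X}\fp\in\Loc(\fibre{\widetilde Y}\fp)$ in $\KProj{\fibre A\fp}$ for every $\fp\in\Spec R$.

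It remains to recognise this fibrewise condition as membership in a localising subcategory of $\uGProj{\fibre A\fp}$, and here regularity of $R$ is used. The complexes $\widetilde X,\widetilde Y$ are acyclic complexes of $R$-projective modules, hence null-homotopic over $R$ by Lemma~\ref{le:regular}(1); as null-homotopy is preserved by the additive functor $-\otimes_R k(\fp)$, the fibres $\fibre{\widetilde X}\fp$ and $\fibre{\widetilde Y}\fp$ — visibly complexes of projective $\fibre A\fp$-modules — are split-acyclic, so they lie in $\KacProj{\fibre A\fp}$. Since projective $A$-modules are projective, in particular flat, over $R$, passage to the cokernel of $d^{-1}$ commutes with $-\otimes_R k(\fp)$; thus $\fibre{\widetilde X}\fp$ is a complete resolution of $\fibre X\fp$ and $\fibre{\widetilde Y}\fp$ one of $\fibre Y\fp$. (That $\fibre X\fp,\fibre Y\fp$ are Gorenstein projective over $\fibre A\fp$, so that the statement even makes sense, is Lemma~\ref{le:gproj-fibre}; recall $\fibre A\fp$ is Iwanaga--Gorenstein by Proposition~\ref{pr:gor=fibre}, and primes $\fp\notin\supp_R A$ are vacuous since then $\fibre A\fp=0$.) Running the two formal observations of the first paragraph with $\fibre A\fp$ in place of $A$ — the equivalence $\KacProj{\fibre A\fp}\iso\uGProj{\fibre A\fp}$ and the fact that $\KacProj{\fibre A\fp}$ is localising in $\KProj{\fibre A\fp}$ — identifies $\fibre{\widetilde X}\fp\in\Loc(\fibre{\widetilde Y}\fp)$ in $\KProj{\fibre A\fp}$ with $\fibre X\fp\in\Loc(\fibre Y\fp)$ in $\uGProj{\fibre A\fp}$. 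Chaining the equivalences proves the theorem.

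In the end this is mostly bookkeeping and I anticipate no genuine obstacle. The one substantive input is Lemma~\ref{le:regular}(1), needed to guarantee that the fibre of a complete resolution stays acyclic — and is therefore again a complete resolution, of the fibre. The two points that require a little care are the commutation of the complete-resolution functor with the base change $-\otimes_R k(\fp)$, which reduces to $R$-flatness of projective $A$-modules, and the trivial handling of primes outside $\supp_R A$.
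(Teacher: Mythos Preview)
Your proposal is correct and follows exactly the route the paper has in mind: the theorem is stated there without proof, deduced ``in view of Theorem~\ref{th:Buchweitz-big} and the preceding result'' (Lemma~\ref{le:gproj-fibre}) from Theorem~\ref{th:lg-kproj}. You have simply spelled out the bookkeeping --- the identification of localising subcategories under the inclusion $\KacProj A\hookrightarrow\KProj A$, and the compatibility of complete resolutions with base change via Lemma~\ref{le:regular}(1) --- which the paper leaves implicit.
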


In the remainder of this section, we focus on a class of Gorenstein
algebras for which it is easy to describe the Gorenstein projective
modules.

\subsection*{Fibrewise self-injective algebras}
The \emph{dualising bimodule} of a finite projective $R$-algebra $A$ is the $A$-bimodule 
\[
\omega_{A/R}\colonequals \Hom_R(A,R)\,.
\]
As noted in the proof of Proposition~\ref{pr:gor=fibre}, when $A$ is a
Gorenstein $R$-algebra, $\omega_{A/R}$ is perfect on either side,
though not necessarily as a bimodule.  Moreover, this property
characterises the Gorenstein property of $A$ when $R$ is Gorenstein;
see \cite[Theorem~4.6]{Iyengar/Krause:2022a}. In the sequel,
Gorenstein algebras for which the dualising bimodule is projective on
either side play a prominent role. The result below characterises
these algebras in terms of their fibres.

An $R$-algebra $A$ is \emph{fibrewise self-injective} if it is a finite
projective $R$-algebra such that the finite dimensional algebra
$\fibre A{\fp}$ is self-injective for each $\fp$ in $\supp_R A$. Our
primary example is the group algebra of a finite group scheme over
$R$.

For a module $M$ we denote by $\add(M)$ the full subcategory of finite
direct sums of copies of $M$ plus their direct summands.

\begin{lemma}
\label{le:fibrewise-injective}
Let $R$ be a Gorenstein ring and $A$ a finite projective $R$-algebra. The  conditions below are equivalent.
\begin{enumerate}[\quad\rm(1)]
\item
The $R$-algebra $A$ is fibrewise self-injective;
\item
$\mathrm{add}(\omega_{A/R})=\add(A)$ and $\mathrm{add}(\omega_{\op{A}/R})=\add(\op A)$;
\item 
The dualising bimodule $\omega_{A/R}$ is projective on the left and on the right.
\end{enumerate}
If they hold the $R$-algebra $A$ is Gorenstein, and one has an equivalences of categories
\[
\omega_{A/R}\otimes_A (-) \colon \Prj A \longiso \Prj A 
\]
with inverse $\Hom_A(\omega_{A/R},-)$, and similarly for $\Prj\op A$.
\end{lemma}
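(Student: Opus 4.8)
The plan is to run everything through the $R$-linear duality $(-)^\vee := \Hom_R(-,R)$. Since $A$ is finite projective over $R$, this is a contravariant equivalence on the category of $R$-projective finitely generated modules, interchanging left and right $A$-modules, with $(-)^{\vee\vee}\cong\id$; writing $\omega := \omega_{A/R}$, it sends ${}_AA$ to $\omega_A$ and $A_A$ to ${}_A\omega$. Moreover $\omega = \Hom_R(A,R)$ is itself finite projective over $R$, so for every $\fp\in\Spec R$ there is a natural identification $\fibre\omega\fp\cong\Hom_{k(\fp)}(\fibre A\fp,k(\fp))$ of $\fibre A\fp$-modules, on either side, and this vanishes when $\fp\notin\supp_R A$. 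The second ingredient is Lemma~\ref{le:projective-fibrewise}, which computes $\projdim_A\omega$ (and $\projdim_{\op A}\omega$) as the supremum of the projective dimensions of the fibres.

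With these tools, the equivalence $(1)\Leftrightarrow(2)\Leftrightarrow(3)$ is short. $(2)\Rightarrow(3)$ is immediate. For $(3)\Rightarrow(2)$: ${}_A\omega$ finitely generated projective gives ${}_A\omega\in\add({}_AA)$, and dually $\omega_A\in\add(A_A)$; since $(-)^\vee$ is additive and sends $A_A$ to ${}_A\omega$, it carries $\add(A_A)$ into $\add({}_A\omega)$, so ${}_AA=(\omega_A)^\vee\in\add({}_A\omega)$, whence $\add({}_A\omega)=\add({}_AA)$, and symmetrically $\add(\omega_{\op A/R})=\add(\op A)$. For $(1)\Leftrightarrow(3)$: by Lemma~\ref{le:projective-fibrewise}, ${}_A\omega$ is projective over $A$ exactly when each $\fibre\omega\fp\cong\Hom_{k(\fp)}(\fibre A\fp,k(\fp))$ is projective over $\fibre A\fp$; and for a finite-dimensional algebra $\Lambda$ over a field $k$, the bimodule $\Hom_k(\Lambda,k)$ — which is always injective both on the left and on the right — is projective if and only if $\Lambda$ is self-injective (the forward implication: applying $\Hom_k(-,k)$ to a split monomorphism ${}_\Lambda\Hom_k(\Lambda,k)\hookrightarrow{}_\Lambda\Lambda^{n}$ exhibits $\Lambda_\Lambda$ as a direct summand of the injective right $\Lambda$-module $\Hom_k(\Lambda,k)^{n}$). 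This, together with the same statement over $\op A$, yields $(1)\Leftrightarrow(3)$.

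Granting these conditions, each fibre $\fibre A\fp$ is self-injective, hence Iwanaga--Gorenstein, so $A$ is a Gorenstein $R$-algebra by Proposition~\ref{pr:gor=fibre} — this is where the hypothesis that $R$ is Gorenstein enters. For the asserted equivalence of categories I would show that $\omega$ is an invertible $A$-bimodule. By (2)--(3), ${}_A\omega$ is a progenerator of $\Mod A$, and the right-multiplication ring homomorphism $A\to\operatorname{End}_A({}_A\omega)^{\mathrm{op}}$ is bijective: under $(-)^\vee$, using ${}_A\omega=(A_A)^\vee$, it is carried to the canonical isomorphism $A\iso\operatorname{End}_{\op A}(A_A)$. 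A bimodule that is a progenerator on one side with endomorphism ring $A$ acting through the other side is invertible, so $\omega\otimes_A-\colon\Mod A\iso\Mod A$ is an autoequivalence with quasi-inverse $\omega^{-1}\otimes_A-$, where $\omega^{-1}=\Hom_A({}_A\omega,{}_AA)$; and $\omega^{-1}\otimes_A-\cong\Hom_A(\omega,-)$ because ${}_A\omega$ is finitely generated projective. Both $\omega\otimes_A-$ and $\Hom_A(\omega,-)$ carry $\Prj A$ into itself — indeed $\omega\otimes_A A\cong\omega$ and $\Hom_A(\omega,A)=\omega^{-1}$ are projective — so they restrict to mutually inverse autoequivalences of $\Prj A$. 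The statement for $\Prj\op A$ follows by the same argument applied to $\op A$ and $\omega_{\op A/R}$.

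The step that will need the most care is this last one: verifying that $A\to\operatorname{End}_A({}_A\omega)^{\mathrm{op}}$ is an isomorphism and that $\Hom_A(\omega,-)$ is the quasi-inverse of $\omega\otimes_A-$, which comes down to keeping the two $A$-actions on $\omega=\Hom_R(A,R)$ straight. The Picard-group formalism organizes this cleanly; alternatively one can bypass it and check directly that the unit $M\to\Hom_A(\omega,\omega\otimes_A M)$ and counit $\omega\otimes_A\Hom_A(\omega,N)\to N$ of the adjunction $\omega\otimes_A-\dashv\Hom_A(\omega,-)$ are isomorphisms on $\Prj A$, reducing by additivity and compatibility with coproducts to $M=N=A$ and again invoking $(-)^\vee$. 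Everything else is a routine application of Lemma~\ref{le:projective-fibrewise} and of the elementary duality for finite projective $R$-algebras.
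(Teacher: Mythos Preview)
Your argument is correct. The overall strategy is close to the paper's, but the route to condition~(2) is genuinely different and worth noting.

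The paper runs the cycle $(1)\Rightarrow(2)\Rightarrow(3)\Rightarrow(1)$. For $(1)\Rightarrow(2)$ it shows directly that $A\in\add(\omega_{A/R})$ by checking that the evaluation map $\omega_{A/R}\otimes_A\Hom_A(\omega_{A/R},A)\to A$ is surjective, which it verifies on each fibre using self-injectivity there. You instead prove $(3)\Rightarrow(2)$ by exploiting the $R$-linear duality $(-)^\vee=\Hom_R(-,R)$: from $\omega_A\in\add(A_A)$ you dualise to obtain ${}_AA=(\omega_A)^\vee\in\add({}_A\omega)$. This is slicker and avoids the base-change bookkeeping needed to identify the fibre of the evaluation map with the evaluation map of the fibre. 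Your $(1)\Leftrightarrow(3)$ via Lemma~\ref{le:projective-fibrewise} and the standard characterisation of self-injectivity for finite-dimensional algebras is essentially the same content as the paper's opening observation together with its $(3)\Rightarrow(1)$.

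For the final equivalence on $\Prj A$, the paper simply records the isomorphism $A\iso\Hom_A(\omega_{A/R},\omega_{A/R})$ and defers to \cite[Theorem~4.5]{Iyengar/Krause:2022a}; your Morita-theoretic argument that $\omega$ is an invertible bimodule (progenerator with endomorphism ring $A$, verified again via $(-)^\vee$) makes this self-contained. The one place to be careful, as you yourself flag, is matching the right $A$-action on $\omega$ with the canonical isomorphism $A\cong\operatorname{End}_{\op A}(A_A)$ under duality; your outline handles this correctly.
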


\begin{proof}
  In what follows we use the observation that for each $\fp$ in
  $\Spec R$ one has an isomorphism of $\fibre A{\fp}$-bimodules:
\[
\omega_{A/R}\otimes_R k(\fp) \cong \omega_{{\fibre A{\fp}}/k(\fp)}\,.
\]
Hence the $A$-bimodule $\omega_{A/R}$ is projective on either side if
and only if the $\fibre A{\fp}$-bimodule
$\omega_{{\fibre A{\fp}}/k(\fp)}$ is projective on either side for
each $\fp$ in $\Spec R$; see Lemma~\ref{le:projective-fibrewise}.
 
(1)$\Rightarrow$(2) It suffices to prove that $A$ is in
$\add(\omega_{A/R})$, equivalently that the map
\[
\omega_{A/R}\otimes_A\Hom_A(\omega_{A/R},A) \lra A
\]
given by evaluation, is surjective. Since an $R$-module $M$ is zero if and only if $\fibre M\fp$ is zero for each $\fp$ in $\Spec R$, it suffices to check the surjectivity of the map on the fibres, that is to say, the map 
\[
\omega_{\fibre A{\fp}/k(\fp)}\otimes_{\fibre A{\fp}} \Hom_{\fibre A{\fp}}(\omega_{\fibre A{\fp}/k(\fp)},{\fibre A{\fp}}) \lra {\fibre A{\fp}}
\]
is surjective. This holds as the  $k(\fp)$-algebra ${\fibre A{\fp}}$ is self-injective.

(2)$\Rightarrow$(3) is clear.

(3)$\Rightarrow$(1) Given the isomorphism above, condition (3) yields
that the dualising bimodule of $\fibre A{\fp}$ over $k(\fp)$ is
projective, that is to say, $\fibre A{\fp}$ is self-injective.

It remains to verify the last part of the statement. The Gorenstein
property follows from Proposition~\ref{pr:gor=fibre}.  The equivalence
follows from the fact that $\omega_{A/R}$ is projective on both sides
and the isomorphism
\[
A\longiso \Hom_{A}(\omega_{A/R},\omega_{A/R})\,.
\]
See also \cite[Theorem~4.5]{Iyengar/Krause:2022a}.
\end{proof}

\begin{proposition}
\label{pr:IG}
Let $R$ be a Gorenstein ring and $A$ a fibrewise self-injective $R$-algebra. Then 
\[
\injdim_A A = \dim_RA =\injdim_{\op A} {\op A}\,.
\]
In particular, $A$ is Iwanaga--Gorenstein if and only if $\dim_RA$ is finite.
\end{proposition}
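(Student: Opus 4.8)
The plan is to compute $\injdim_A A$ by trading $A$ for its dualising bimodule and then changing rings down to $R$. By Lemma~\ref{le:fibrewise-injective} the hypothesis implies that $\omega_{A/R}$ is projective on either side and that $\add(\omega_{A/R})=\add(A)$ inside $\Mod A$, and symmetrically inside $\Mod\op A$. Since injective dimension depends only on the additive closure of a module, it is enough to show
\[
\injdim_A\omega_{A/R}=\dim_RA\qquad\text{and}\qquad\injdim_{\op A}\omega_{\op A/R}=\dim_RA,
\]
where $\dim_RA$ denotes the dimension of $\supp_RA$, equivalently $\sup\{\dim R_\fp\mid\fp\in\supp_RA\}$. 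The point of passing to $\omega_{A/R}=\Hom_R(A,R)$ is that, as a left $A$-module, it is coinduced from $R$ along $R\to A$, which makes a change-of-rings adjunction available.

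First I would record that adjunction. Since $A$ is finite projective, hence flat, over $R$, the coinduction functor $\Hom_R(A,-)\colon\Mod R\to\Mod A$ is exact and carries injective $R$-modules to injective $A$-modules; combined with the adjunction between restriction of scalars and coinduction this yields, for every $A$-module $M$, a natural isomorphism $\RHom_A(M,\omega_{A/R})\cong\RHom_R(M,R)$ in $\dcat R$. Because $A$ is left noetherian, $\injdim_A\omega_{A/R}$ is the supremum of the integers $i$ with $\Ext^i_A(M,\omega_{A/R})\ne 0$ for some finitely generated $A$-module $M$, hence by the isomorphism the supremum of the $i$ with $\Ext^i_R(M,R)\ne 0$ for some $M\in\rmod A$; as such an $M$ is finitely generated over $R$ (since $A$ is module-finite over $R$), these $\Ext$-modules may be detected after localising at the maximal ideals of $R$.

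Now the two bounds. Any $A$-module $M$ satisfies $\supp_RM\subseteq\supp_RA$, so for a maximal ideal $\fm$ of $R$ one has $\Ext^i_R(M,R)_\fm=\Ext^i_{R_\fm}(M_\fm,R_\fm)$, which vanishes when $\fm\notin\supp_RA$ (then $M_\fm=0$) and when $i>\dim R_\fm$ (since $R_\fm$ is Gorenstein local, so $\injdim_{R_\fm}R_\fm=\dim R_\fm$); thus $\Ext^i_R(M,R)=0$ for all $A$-modules $M$ once $i>\dim_RA$, giving $\injdim_A\omega_{A/R}\le\dim_RA$. For the reverse inequality, fix a maximal ideal $\fm\in\supp_RA$ and take $M\colonequals A/\fm A=\fibre{A}{\fm}$, an $A$-module. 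By $R$-flatness of $A$, tensor--hom adjunction gives $\RHom_R(A/\fm A,R)\cong\RHom_R(R/\fm,\omega_{A/R})$, and localising at $\fm$ this is $\RHom_{R_\fm}(k(\fm),(\omega_{A/R})_\fm)$; the $R_\fm$-module $(\omega_{A/R})_\fm$ is free of rank $\operatorname{rank}_\fm A>0$, and $R_\fm$ being Gorenstein local of dimension $d_\fm\colonequals\dim R_\fm$ one has $\Ext^{d_\fm}_{R_\fm}(k(\fm),R_\fm)\cong k(\fm)$ (the Gorenstein analogue of Lemma~\ref{le:regular}(2)). Hence $\Ext^{d_\fm}_R(A/\fm A,R)\ne 0$, so $\injdim_A\omega_{A/R}\ge d_\fm$, and taking the supremum over all maximal $\fm\in\supp_RA$ gives $\injdim_A\omega_{A/R}\ge\dim_RA$. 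Therefore $\injdim_AA=\injdim_A\omega_{A/R}=\dim_RA$, and running the same argument for $\op A$—again fibrewise self-injective, with $\add(\omega_{\op A/R})=\add(\op A)$ by Lemma~\ref{le:fibrewise-injective}—yields $\injdim_{\op A}\op A=\dim_RA$. The final assertion is then immediate: $A$ is Iwanaga--Gorenstein precisely when both one-sided injective dimensions of $A$ are finite, which by the displayed equalities holds exactly when $\dim_RA<\infty$.

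The genuine content here is the passage to $\omega_{A/R}$ via Lemma~\ref{le:fibrewise-injective}: for a general finite projective $R$-algebra there is no bound on $\injdim_AA$ in terms of invariants of $R$, so one really must use that $\omega_{A/R}$ is coinduced and, up to additive closure, equals $A$. Everything after that is the coinduction adjunction together with standard local commutative algebra over the Gorenstein ring $R$. The points that need care are bookkeeping ones: that $\dim_RA$—the dimension of the possibly proper clopen subset $\supp_RA$ of $\Spec R$—governs both bounds, that the left $A$-module structure on $\omega_{A/R}$ is precisely the coinduced one so that the adjunction isomorphism holds, and that injective dimension over the noetherian ring $A$ is detected on finitely generated modules, which is what lets the computation descend to $R$.
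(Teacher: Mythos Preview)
Your proof is correct and follows essentially the same approach as the paper: both arguments use the equality $\add(A)=\add(\omega_{A/R})$ from Lemma~\ref{le:fibrewise-injective} to replace $A$ by its dualising bimodule, then invoke the coinduction adjunction $\Ext^*_A(-,\omega_{A/R})\cong\Ext^*_R(-,R)$ to reduce to the injective dimension of the Gorenstein ring $R$. The only cosmetic differences are that the paper localises at the outset and tests the upper bound on simple modules (citing \cite[Lemma~B.3.1]{Buchweitz:2021a}), whereas you work globally and test on all finitely generated modules; and for the lower bound the paper uses the induction adjunction $\Ext^*_A(A\otimes_Rk,A)\cong\Ext^*_R(k,A)$ directly against $A$, while you stay with $\omega_{A/R}$ and apply one further tensor--hom identity.
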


\begin{proof}
It suffices to verify the equality for $A$; the one for $A^{\mathrm{op}}$ holds, by symmetry. Since the injective dimension of $A$ is detected by the vanishing of $\Ext_A^i(-,A)$ on finitely generated $A$-modules, it suffices to verify that
\[
\injdim_{A_\fp} A_\fp= \dim_{R_\fp}A_\fp \quad\text{for each $\fp$ in $\supp_RA$.}
\]
We can replace $R$ and $A$ by their localisations at $\fp$ so that $R$ is local, and hence $A$ is semi-local, and $\dim_R A=\dim R$. For any simple $A$-module $L$ one has 
\[
\Ext_A^i(L,\omega_{A/R}) = \Ext_A^i(L,\Hom_R(A,R))\cong \Ext_R^i(A\otimes_RL,R)\,.
\]
Since the ring $R$ is Gorenstein, hence of injective dimension $\dim R$, we deduce that $\Ext_A^i(L,\omega_{A/R})=0$ for $i>\dim R$. Thus Lemma~\ref{le:fibrewise-injective}(3) yields
\[
\Ext_A^i(L,\omega_{A/R})=0 \quad\text{for $i>\dim R$.}
\]
Hence $\injdim_A A\le \dim R$  by \cite[Lemma~B.3.1]{Buchweitz:2021a}. For the converse equality, with $k$ the residue field of $R$, one has  $\Ext_R^d(k,R)\cong k$; see Lemma~\ref{le:regular}. Since $A$ is a non-zero finite free $R$-module, one gets 
\[
\Ext^{d}_A(A\otimes_Rk,A) \cong \Ext_R^{d}(k,A) \ne 0\,.
\]
Thus $\injdim_A A\ge \dim R$. This justifies the stated equalities.
\end{proof}

In the result below, the converse statement need not hold for general Gorenstein algebras, as can be seen by contemplating the case when $R$ is a field.

\begin{lemma}
\label{le:symmetric}
Let $A$ be a Gorenstein $R$-algebra and $M$ an $A$-module. When $M$ is Gorenstein projective, it is Gorenstein projective also as an $R$-module. The converse holds when $A$ is fibrewise self-injective and either $M$ is finitely generated or $\dim_RA$ is finite.
\end{lemma}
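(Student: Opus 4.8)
The plan is to handle the two directions separately, exploiting the characterisation of Gorenstein projective modules in terms of membership in an acyclic complex of projectives together with the regularity that is implicit in the setup via $R$ being Gorenstein (and, in the finite case, via semi-locality). For the forward direction: suppose $M$ is Gorenstein projective over $A$, so $M \cong \Coker(d_X^{-1})$ for some $X \in \KacProj A$. Restriction along $R \to A$ sends $X$ to an acyclic complex of $A$-modules that, because $A$ is finite projective over $R$, is a complex of projective $R$-modules; moreover it stays acyclic since acyclicity is a property of the underlying complex of abelian groups. Since $A$ is Gorenstein, $X$ is totally acyclic over $A$; the point to check is that restriction preserves \emph{total} acyclicity over $R$, i.e.\ that $\Hom_R(X, Q) = 0$ in $\bfK(R)$ for every projective $R$-module $Q$. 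This follows because $\Hom_R(A, Q)$ is an injective-dimension-finite, indeed (when $R$ is Gorenstein) can be analysed via $\omega_{A/R}$; alternatively, one invokes that $R$ being Gorenstein means projective $R$-modules have finite injective dimension, and an acyclic complex of projectives over a ring where projectives have finite injective dimension stays totally acyclic upon the relevant $\Hom$ — this needs a small argument but is standard. Then $M$, as a syzygy of a totally acyclic complex of projective $R$-modules, is Gorenstein projective over $R$.

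For the converse, assume $A$ is fibrewise self-injective and $M$ is Gorenstein projective as an $R$-module, and first treat the case $\dim_R A < \infty$. By Proposition~\ref{pr:IG}, $A$ is then Iwanaga--Gorenstein, so by Buchweitz's characterisation an $A$-module is Gorenstein projective if and only if it is of finite projective dimension over $A$ in the appropriate derived sense — more precisely, $M$ is Gorenstein projective over $A$ iff $\Ext^i_A(M, A) = 0$ for all $i > 0$ and a complete resolution exists; since $A$ is Iwanaga--Gorenstein the vanishing $\Ext^{\gg 0}_A(M,A) = 0$ already suffices, by \cite[Theorem~4.4.1]{Buchweitz:2021a} or the standard criterion. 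So the task reduces to showing $\Ext^i_A(M, A) = 0$ for $i \gg 0$ given that $M$ is Gorenstein projective over $R$. Here I would use the change-of-rings spectral sequence or a direct adjunction argument: since $A$ is projective (hence flat) over $R$ and finitely generated, and since $M$ being Gorenstein projective over $R$ with $R$ Gorenstein forces $M$ to be \emph{projective} over $R$ (a Gorenstein projective module over a ring of finite global dimension — which regular local rings are — is projective; more generally over a Gorenstein ring this is not automatic, so one must be careful, but the lemma's own setup earlier asserts that over regular $R$ Gorenstein projective $= R$-projective). Using that $M$ is $R$-projective, $\RHom_A(M, A) \simeq \RHom_A(M, \RHom_R(\omega_{A/R}, R))$ via $A \cong \Hom_R(\omega_{A/R}, R)$ when $\omega_{A/R}$ is projective on both sides (Lemma~\ref{le:fibrewise-injective}), and one transfers the vanishing to an $\Ext$ over $R$ which vanishes because $M$ is $R$-projective. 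That gives $\Ext^i_A(M,A) = 0$ for all $i > 0$, hence $M$ is Gorenstein projective over $A$.

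For the remaining case — $M$ finitely generated, $\dim_R A$ possibly infinite — I would localise: Gorenstein projectivity of a finitely generated module can be tested after localising at each $\fp \in \Spec R$ (using that $\Ext$ commutes with localisation for finitely generated modules, as in the proof of Lemma~\ref{le:projective-fibrewise}), and after localising $R$ becomes local of finite Krull dimension, hence $A_\fp$ has $\dim_{R_\fp} A_\fp < \infty$, reducing to the previous case. One must check that $M$ being Gorenstein projective over $R$ implies $M_\fp$ is Gorenstein projective over $R_\fp$ — again clear from the syzygy description, since localisation is exact and sends projectives to projectives, and over the regular local ring $R_\fp$ this just says $M_\fp$ is $R_\fp$-projective, i.e.\ (for f.g.\ $M$) free. \textbf{The main obstacle} I anticipate is the bookkeeping in the converse direction around whether "Gorenstein projective over $R$" can be upgraded to "projective over $R$" in the generality stated: over a general Gorenstein ring this fails, but the excerpt has already (in the paragraph before Lemma~\ref{le:regular} and around Lemma~\ref{le:symmetric}) tied the relevant hypotheses to $R$ regular, so the cleanest route is to first reconcile the statement with the ambient hypothesis $R$ regular, reducing everything to "$M$ is $R$-projective", and then the $\Ext$-vanishing argument via $\omega_{A/R}$ goes through smoothly; getting the logical dependencies straight without circularity is the delicate point.
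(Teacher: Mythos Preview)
Your converse argument has the right core move --- replace $A$ by $\omega_{A/R}$ via Lemma~\ref{le:fibrewise-injective} and use adjunction to reduce $\Ext_A$ to $\Ext_R$ --- but your ``main obstacle'' is self-inflicted. You worry about upgrading ``$M$ is Gorenstein projective over $R$'' to ``$M$ is projective over $R$'', and you resolve it by importing the hypothesis that $R$ is regular, which is not part of the lemma. The paper's proof needs no such upgrade: the adjunction gives $\Ext^i_A(M,\Hom_R(A,F))\cong\Ext^i_R(M,F)$, and the right-hand side vanishes for $i\ge 1$ and $F$ free \emph{directly} because $M$ is Gorenstein projective over $R$ --- over a Gorenstein ring an acyclic complex of projectives is totally acyclic, so its syzygies have no higher $\Ext$ against projectives. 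No regularity is needed, and your obstacle dissolves.

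The organisation also differs. Rather than treating the case $\dim_R A<\infty$ first and then reducing the finitely generated case to it by localisation, the paper proves the vanishing $\Ext^i_A(M,P)=0$ for all $i\ge 1$ and all $P\in\Prj A$ once and for all, and only then branches: it cites \cite[Lemma~6.3]{Iyengar/Krause:2022a} when $M$ is finitely generated, and \cite[Corollary~11.5.3]{Enochs/Jenda:2000a} together with Proposition~\ref{pr:IG} when $\dim_R A<\infty$. Your localisation route for the finitely generated case could be made to work, but to reassemble local Gorenstein projectivity into global you would need precisely the $\Ext$-vanishing characterisation for finitely generated modules over a Gorenstein algebra --- which is what the paper invokes directly, so the detour buys nothing.

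For the forward direction you overthink it. With the paper's definition --- Gorenstein projective means syzygy in an acyclic complex of projectives, no total acyclicity required --- the argument is a one-liner: since $A$ is finite projective over $R$, projective $A$-modules are projective $R$-modules, so an acyclic complex of projective $A$-modules is already an acyclic complex of projective $R$-modules, and $M$ sits in it as a syzygy.
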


\begin{proof}
Since $A$ is finite projective as an $R$-module, any projective $A$-module is also projective as an $R$-module, and hence any acyclic complex of projective $A$-modules is an acyclic complex of projective $R$-modules. It follows that any Gorenstein projective $A$-module is Gorenstein projective also as an $R$-module. 

Suppose that $A$ is fibrewise self-injective and that $M$ is Gorenstein projective as an $R$-module. We verify that $\Ext_A^i(M,-)=0$ for $i\ge 1$ and on $\Prj A$. Given this, if $M$  is finitely generated one can apply \cite[Lemma~6.3]{Iyengar/Krause:2022a} to conclude that it is Gorenstein projective also as an $A$-module. We can draw the same conclusion from \cite[Corollary 11.5.3]{Enochs/Jenda:2000a} for a general $M$ when we also know $\dim_RA$ is finite, for then $A$ is Iwanaga--Gorenstein, by Proposition~\ref{pr:IG}.

As to the vanishing of Ext, any projective $A$-module is a direct summand of a free $A$-module, and any free $A$-module is of the form $A\otimes_RF$ for some free $R$-module $F$. Therefore it suffices to verify that
\[
\Ext^i_A(M,A\otimes_RF)=0\quad\text{for $i\ge 1$.}
\]
Since $A$ is in $\add(\omega_{A/R})$, by Lemma~\ref{le:fibrewise-injective}, the $A$-module $A\otimes_RF$ is in additive subcategory generated by
\[
\omega_{A/R}\otimes_RF  =\Hom_R(A,R)\otimes_RF\cong \Hom_R(A,F)\,.
\]
Thus it suffices to verify that $\Ext_A^i(M,\Hom_R(A,F))=0$ for $i\ge 1$. This follows from  the adjunction isomorphism
\[
\Ext_A^i(M,\Hom_R(A,F)) \cong \Ext_R^i(M,F)
\]
and the hypothesis that $M$ is Gorenstein projective as an $R$-module.
\end{proof}

\section{Cocommutative Hopf algebras}
\label{se:hopf-algebras}
Throughout this section $R$ will be a regular commutative noetherian
ring and $A$ a finite cocommutative Hopf algebra over $R$; this
includes the condition that $A$ is projective as an $R$-module. Then
$\KProj A$ has a natural structure of a tensor-triangulated category,
and one has an analogue of the fibrewise criterion from
Section~\ref{se:local-global} that takes into account this additional
structure. With some further assumption on the cohomology of $A$ we
are then able to stratify the tensor-triangulated category $\KProj A$
via the action of the cohomology ring of $A$.

\subsection*{Tensor structure}
Given $A$-modules $X$ and $Y$, there is a natural diagonal $A$-module structure on $X\otimes_RY$, obtained by restricting its $A\otimes_RA$-module structure along the coalgebra map $\Delta\colon A\to A\otimes_RA$. 

\begin{lemma}
Let $P,Q$ be $A$-modules. If $P$ is projective over $A$ and $Q$ is projective over $R$, then the $A$-module $P\otimes_RQ$, with the usual diagonal action, is projective.
\end{lemma}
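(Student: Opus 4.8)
The standard trick for this kind of statement is to reduce to the case $Q=R$, using the fact that projective modules are summands of free modules. First I would observe that since $Q$ is projective over $R$, it is a direct summand of a free $R$-module $F$, so $P\otimes_R Q$ is a direct summand of $P\otimes_R F\cong P^{(I)}$ (a coproduct of copies of $P$ as an $R$-module). But this is not quite enough, because the $A$-module structure on $P\otimes_R F$ is the diagonal one, not the obvious one on $P^{(I)}$, so I cannot immediately conclude $P\otimes_R F$ is $A$-projective just from $P$ being $A$-projective. The key point, and the one I expect to be the crux, is that for the diagonal action there is an $A$-module isomorphism
\[
P\otimes_R V \;\cong\; P\otimes_R V_{\mathrm{triv}}
\]
whenever $P$ is \emph{free} over $A$ (equivalently, an untwisting isomorphism: with $V$ carrying the trivial $A$-action on the right-hand side), natural enough to be available here.

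Concretely, the plan is: (1) reduce to $P=A$ by noting every projective $A$-module is a summand of a free one $A^{(J)}=A\otimes_R R^{(J)}$, and $-\otimes_R Q$ preserves summands and coproducts, with the diagonal $A$-action being compatible with these; (2) reduce $Q$ to a free $R$-module $R^{(I)}$ by the same summand argument on the $R$-side, using that $-\otimes_R Q$ for $Q$ a summand of $R^{(I)}$ sends $A$ to a summand of $A\otimes_R R^{(I)}$ as $A$-modules (diagonal action); (3) exhibit an isomorphism of $A$-modules $A\otimes_R Q\cong A\otimes_R Q_{\mathrm{triv}}$ for the diagonal versus trivial actions, which for $Q=R^{(I)}$ shows $A\otimes_R R^{(I)}$ is $A$-free. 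For step (3), the isomorphism is the classical one coming from the Hopf structure: the map $a\otimes q\mapsto \sum a_{(1)}\otimes S(a_{(2)})\cdot q$ in one direction—but here $q$ carries no $A$-action so one must be slightly careful. In fact the clean statement is that for the diagonal module $A\otimes_R Q$ with $Q$ any $R$-module (trivial action), the map
\[
\Phi\colon A\otimes_R Q \lra A\otimes_R Q, \qquad a\otimes q \longmapsto \textstyle\sum a_{(1)}\otimes \varepsilon(a_{(2)})q = a\otimes q
\]
is not the right one; rather one uses that $A\otimes_R Q$ with diagonal action (where $Q$ is \emph{trivial}) is already free, because the diagonal action on $A\otimes_R Q_{\mathrm{triv}}$ is $a\cdot(b\otimes q)=\sum a_{(1)}b\otimes \varepsilon(a_{(2)})q = ab\otimes q$, i.e.\ it is just $A^{(\dim Q)}$ outright. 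So actually step (3) is trivial once step (2) is set up correctly: the only place a genuine Hopf-algebra untwisting is needed is if $Q$ carries a nontrivial action, which it does not here.

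So the real content is entirely in steps (1)–(2): carefully checking that the functor $-\otimes_R Q$ (with diagonal $A$-action on the output) commutes with the relevant coproducts and retracts in the category of $A$-modules. I would spell this out as: for any $A$-module $X$ and $R$-modules $Q,Q'$, there is a natural $A$-isomorphism $X\otimes_R(Q\oplus Q')\cong (X\otimes_R Q)\oplus(X\otimes_R Q')$ and, for $Q$ free, $A\otimes_R Q$ is a free $A$-module; then for general projective $Q$ over $R$ and projective $P$ over $A$, write $P\oplus P'=A^{(J)}$ and $Q\oplus Q'=R^{(I)}$, giving
\[
(P\otimes_R Q)\;\oplus\;(P\otimes_R Q')\;\oplus\;(P'\otimes_R Q)\;\oplus\;(P'\otimes_R Q')\;\cong\;A^{(J)}\otimes_R R^{(I)}\;\cong\;A^{(J\times I)},
\]
so $P\otimes_R Q$ is a summand of a free $A$-module, hence projective. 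The main obstacle is purely bookkeeping—ensuring every isomorphism invoked is one of $A$-modules for the diagonal action, not merely of $R$-modules—and there is no deep point beyond the observation that the diagonal action on $A\otimes_R(\text{trivial})$ collapses to the free action via counitality $\sum a_{(1)}\varepsilon(a_{(2)})=a$.
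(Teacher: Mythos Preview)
Your approach differs from the paper's, which dispatches the lemma in one line via the adjunction
\[
\Hom_A(P\otimes_R Q,-)\cong \Hom_A(P,\Hom_R(Q,-))\,,
\]
where the right-hand side carries the diagonal $A$-action on $\Hom_R(Q,-)$. Since $Q$ is $R$-projective the inner functor is exact, and since $P$ is $A$-projective the outer one is exact; hence $P\otimes_R Q$ is $A$-projective.

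Your hands-on argument has a genuine gap in its final form. The decomposition $Q\oplus Q'=R^{(I)}$ is only an $R$-linear splitting: $Q'$ has no given $A$-structure, and $R^{(I)}$ only the trivial one. So the displayed four-term decomposition is not an isomorphism of $A$-modules under the diagonal action; an $R$-linear retraction on the second tensor factor does not produce an $A$-linear retraction of $A^{(J)}\otimes_R Q$ from $A^{(J)}\otimes_R R^{(I)}$ when $Q$ carries a nontrivial action. Your assertion that ``$Q$ \ldots does not'' carry a nontrivial action is exactly backwards: the lemma takes $Q$ to be an arbitrary $A$-module that is projective over $R$.

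The fix is precisely the untwisting you identified in your opening paragraph and then abandoned. After step (1), reducing to $P=A$, apply the isomorphism of $A$-modules
\[
A\otimes_R Q \longiso A\otimes_R Q_{\mathrm{triv}}\,,\qquad a\otimes q\longmapsto \textstyle\sum a_{(1)}\otimes S(a_{(2)})\cdot q\,,
\]
with inverse $a\otimes q\mapsto\sum a_{(1)}\otimes a_{(2)}\cdot q$. The target is the extension of scalars of the $R$-module $Q$ along $R\to A$, hence $A$-projective since $Q$ is $R$-projective. With this correction your argument is valid; the paper's adjunction simply packages the same Hopf-algebra content (comultiplication, counit, antipode) more economically.
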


\begin{proof}
This follows from the standard adjunction isomorphism 
\[
\Hom_A(P\otimes_RQ,-)\cong \Hom_A(P, \Hom_R(Q,-))\,. \qedhere
\]
\end{proof}

The preceding result implies that $-\otimes_R-$ induces a tensor
product on $\KProj A$. 

\begin{lemma}
\label{le:tensor-unit}
The triangulated category $\KProj A$ is tensor-triangulated, with product $-\otimes_R-$ and unit the
 $A$-complex 
 \[
 \one\colonequals \Hom_R(\pres_{\op A}R,R)\,.
 \]
\end{lemma}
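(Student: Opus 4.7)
The plan has three components: closure of $-\otimes_R -$ on $\KProj A$ with biexactness and coproduct-preservation; symmetric monoidal coherence; and the unit axiom $\one\otimes_R X\simeq X$. The first two are largely formal. By the preceding lemma, the componentwise tensor of complexes of projective $A$-modules stays in $\KProj A$, since each projective $A$-module is $R$-projective and the lemma ensures the diagonal tensor is projective over $A$. Biexactness and preservation of arbitrary coproducts in each variable are immediate term by term. The associator, symmetry, and unitors of $(\Mod A,\otimes_R,R)$ under the diagonal action are $A$-linear because $\Delta$ is coassociative and cocommutative and the counit is an algebra map, so they descend to natural isomorphisms in $\KProj A$.

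For the unit, I first verify $\one\in\KProj A$: by additivity it suffices to show that $\omega_{A/R}=\Hom_R(A,R)$ is projective as a left $A$-module. Each fibre $\fibre A{\fp}$ is a finite-dimensional cocommutative Hopf algebra, hence Frobenius by Larson--Sweedler, so $\omega_{\fibre A{\fp}/k(\fp)}$ is fibrewise projective; Lemma~\ref{le:projective-fibrewise} then lifts this to projectivity of the finitely generated left $A$-module $\omega_{A/R}$. Next, applying the functor $\Hom_R(-,R)$ --- which is exact on complexes of $R$-projectives under the regularity hypothesis --- to the quasi-isomorphism $\pres_{\op A}R\to R$ yields a quasi-isomorphism $R\to\one$ of $A$-complexes, $A$-linearity following from $\op A$-linearity of the augmentation. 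Tensoring with $X\in\KProj A$ produces a natural quasi-isomorphism $\eta_X\colon X\to\one\otimes_R X$ (using $R$-flatness of $X$).

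The main obstacle is to upgrade $\eta_X$ from a quasi-isomorphism to a homotopy equivalence in $\KProj A$, since $\KProj A$ has nontrivial acyclic objects. My preferred approach is to verify the unit isomorphism on a set of compact generators via \eqref{eq:kprojc}: for $X=\iota_A(M)$ with $M\in\dbcat{\op A}$, the claim reduces to the identity $R\lotimes_R M\cong M$ in $\dbcat{\op A}$, which is standard. Then, since both endofunctors $\mathrm{id}$ and $\one\otimes_R-$ on $\KProj A$ preserve arbitrary coproducts and exact triangles, $\eta$ is a homotopy equivalence on all of $\KProj A$. Alternatively, one can show $\one$ is K-projective as a left $A$-complex (the $R$-duality functor $\Hom_R(-,R)$ transports K-projectivity from $\op A$ to $A$ under the Hopf assumption), so $\one\simeq\pres R$ in $\KProj A$, and use the recollement \eqref{eq:recollement} to split $X$ as an extension of $\bft X\in\KacProj A$ and $\pres\bfq X\in\kproj A$, reducing to the K-projective factor where quasi-iso equals homotopy equivalence.
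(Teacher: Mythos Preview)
Your main approach --- reduce to compact generators and then extend by coproduct preservation --- is exactly the paper's strategy. However, the sentence ``for $X=\iota_A(M)$ \dots\ the claim reduces to the identity $R\lotimes_R M\cong M$ in $\dbcat{\op A}$'' skips the computation that is the heart of the argument. The equivalence $\iota_A\colon \op{\dbcat{\op A}}\to\KProjc A$ is not a priori monoidal, so you must explain why $\one\otimes_R\iota_A(M)\cong\iota_A(R\lotimes_R M)$ and why $\eta_X$ corresponds under this identification to the canonical map. The paper does precisely this: it exhibits the natural isomorphism
\[
\Hom_A(\pres_{\op A}M,A)\otimes_R\Hom_R(\pres_{\op A}R,R)\;\cong\;\Hom_A(\pres_{\op A}M\otimes_R\pres_{\op A}R,A)
\]
(valid because the resolutions are degreewise finitely generated) and then observes that the resulting map is $\Hom_A(-,A)$ applied to $\pres_{\op A}M\otimes_R\pres_{\op A}R\to\pres_{\op A}M$, which is a homotopy equivalence since both sides are K-projective over $\op A$. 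Your reduction is correct in spirit, but this identification is the substance of the proof and should not be left implicit.

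Your alternative approach fails: $\one$ is \emph{not} K-projective as a left $A$-complex in general. Since $\pres_{\op A}R$ is a projective resolution, its $R$-dual $\one$ is a bounded-below (cohomologically) complex of projective $A$-modules, and such a complex is K-projective only when it is bounded --- that is, only when $R$ has finite projective dimension over $A$, which fails already for $A=kG$ with $p\mid |G|$. The claim that ``$\Hom_R(-,R)$ transports K-projectivity'' is therefore wrong. The recollement variant collapses for the same reason: even for K-projective $X$, the quasi-isomorphism $X\to\one\otimes_R X$ need not be a homotopy equivalence, because the target is not K-projective and hence the acyclic cone need not be contractible.

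On the positive side, your explicit verification that $\one\in\KProj A$ --- via Larson--Sweedler on the fibres and Lemma~\ref{le:projective-fibrewise} --- is a point the paper passes over in its proof, though it is implicit in the later Lemma~\ref{le:fibrewise-injective}.
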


\begin{proof}
We have already seen that $-\otimes_R-$ provides a tensor product  on
$\KProj A$, and it remains to verify the assertion about the unit. Since $\op A$ is noetherian, one can assume that for any $M$ in $\dbcat{\op A}$, its projective resolution $\pres_{\op A}M$ is finitely generated in each degree and  that $(\pres_{\op M})_i=0$ for $i\ll 0$. This fact will be used multiple times in what follows. 

The augmentation $\eps\colon \pres_{\op A}R\to R$ induces the $A$-linear map
\[
\eps^*\colon R\lra \Hom_R(\pres_{\op A}R,R)\,.
\]
For each $X$ in $\KProj A$ this induces the map
\[
X\cong X\otimes_RR \xra{\ X\otimes \eps^*\ } X\otimes_R \Hom_R(\pres_{\op A}R,R)
\]
and the desired result is that this map is an isomorphism. 

Since $\KProj A$ is compactly generated, and the functors involved
preserves coproducts, it suffices to verify the claim when $X$ is
compact, that is to say, of the form $\Hom_A(\pres_{\op A}M,A)$ for some $M$ in $\mathrm{mod}\, \op A$; see~\eqref{eq:kprojc}. Consider the diagram
\[
\begin{tikzcd}
 \Hom_A(\pres_{\op A}M,A) \arrow{drrrr}[swap]{\alpha} \arrow{rrrr}{
   \Hom_A(\pres_{\op A}M,A)\otimes\eps^*}
 	&&&& \Hom_A (\pres_{\op A}M,A) \otimes_R \Hom_R(\pres_{\op A}R,R) \arrow{d}{\cong} \\
 	&&&& \Hom_A(\pres_{\op A}M \otimes_R \pres_{\op A}R,A)
\end{tikzcd}
\]
where the vertical map is the natural one; it is an isomorphism because $\pres_{\op A}A$ and $\pres_{\op A}M$ are degreewise finitely generated. The map $\alpha$ is the obvious composition. It suffices to check that $\alpha$ is an isomorphism.

One can verify that the map $\alpha$ is obtained from the map
\[
\pres_{\op A}M \otimes_R \pres_{\op A}R \xra{\pres_{\op A}M\otimes\eps } \pres_{\op A} M \otimes_R R\cong \pres_{\op A}M
\]
by applying $\Hom_A(-,A)$. Since $\eps$ is a quasi-isomorphism so is
the map above. Since the source and target consist of projective
$\op A$-modules, and are bounded to the right, they are
semi-projective complexes over $\op A$. Thus the map above is a
homotopy equivalence in $\KProj{\op A}$. Therefore applying
$\Hom_A(-,A)$ induces an isomorphism in $\KProj A$. This is the
desired result.
\end{proof}

\subsection*{Rigidity}
The tensor-triangulated category $\KProj A$, which is always compactly
generated, is also rigid when $R$ is regular. We recall briefly the
notion of rigidity in tensor-triangulated categories and refer to
\cite[A.2]{Hovey/Palmieri/Strickland:1997a} for details.

Let $\cat T$ be a compactly generated tensor-triangulated category,
with product $\otimes$ and unit $\one$. We assume that $\one$ is
compact. Being a compactly generated tensor-triangulated category,
$\cat T$ has an \emph{internal function object}, $\fHom(-,-)$, defined
by the property that
\[
\Hom_{\cat T}(X\otimes Y,Z) \cong \Hom_{\cat T}(X,\fHom(Y,Z))
\]
for $X,Y$ and $Z$ in $\cat T$. There is a natural map
\begin{equation}
\label{eq:rigidity}
\fHom(X,\one)\otimes Y \lra \fHom(X,Y)
\end{equation}
and $X$ is \emph{rigid} if this map is an isomorphism for all
$Y$. Since $\one$ is compact, every rigid object is compact, and one
says that $\cat T$ is \emph{rigid} when the converse holds: compact
objects and rigid objects coincide. It is straightforward to verify that for   this property to hold, the following conditions are necessary and sufficient:
\begin{enumerate}[\quad\rm(1)]
\item
the subcategory of compact objects is closed under $\otimes$;
\item
\eqref{eq:rigidity} is an isomorphism when $X,Y$ are compact.
\end{enumerate}

Back to  the tensor-triangulated category $\KProj A$ with product
$\otimes_R$, where $A$ is a cocommutative Hopf algebra over $R$. In
this case, the internal function object can be described quite
concretely as
\[
\fHom(Y,Z) = \bfj \Hom_R(Y,Z)
\]
where $\bfj$ is the right adjoint to the inclusion of $\KProj A$ into the homotopy category of flat $A$-modules; see \cite[Proposition~2.4]{Iyengar/Krause:2006a}. Here is the pertinent result; one can also prove that $\KProj A$ is not rigid when $R$ is not regular.

\begin{lemma}
\label{le:rigidity}
If $R$ is regular, the tensor-triangulated category $\KProj A$ is rigid.
\end{lemma}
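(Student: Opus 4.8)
The plan is to verify the two conditions isolated just before the statement: that the compact objects of $\KProj A$ are closed under $\otimes_R$, and that the canonical map \eqref{eq:rigidity} is an isomorphism when both arguments are compact. The first condition is the more routine of the two. Using the identification \eqref{eq:kprojc}, a compact object of $\KProj A$ is of the form $\iota_A M = \Hom_{\op A}(\pres_{\op A}M, A)$ for some $M \in \dbcat{\op A}$; since $A$ is a Hopf algebra, one should be able to rewrite the tensor product $\iota_A M \otimes_R \iota_A N$ in terms of a single such object. The cleanest route is probably to first observe that the tensor unit $\one = \Hom_R(\pres_{\op A}R, R)$ is compact — being a bounded complex of finitely generated projectives, by the degreewise finiteness of projective resolutions over the noetherian ring $\op A$ noted in the proof of Lemma~\ref{le:tensor-unit} — and then to note that for bounded-above complexes of finitely generated projectives, $\Hom_R(\pres_{\op A}M, R)$ represents the dual and $\otimes_R$ of two such compact objects is again bounded with finitely generated projective terms in each degree, hence K-projective and compact. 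Alternatively one can transport the question through the equivalence $\KProjc A \simeq \dbcat A$ of \eqref{eq:Gorenstein-kprojc} once the Gorenstein property is in force; but since at this point in the paper $A$ is only assumed a Hopf algebra over a regular ring, I would argue directly with projective resolutions.

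For the second condition, the strategy is to reduce the isomorphism statement to the tensor unit. Concretely, for $X$ compact I would show that $\fHom(X, \one)$ is again compact (it is the internal dual, and for $X = \iota_A M$ it should be computable as a shift/twist of some $\iota_A M'$, using that $\fHom(Y,Z) = \bfj\Hom_R(Y,Z)$ and that $\Hom_R$ of a bounded complex of finitely generated projectives into such a complex is again of that form). Then both sides of \eqref{eq:rigidity} are functors of $Y$ that preserve coproducts — the left side obviously, the right side because $X$ compact makes $\fHom(X,-) = \bfj\Hom_R(X,-)$ preserve coproducts (this uses compactness of $X$ together with the fact that $\bfj$, as right adjoint to an inclusion that preserves coproducts, interacts well with sums on the relevant subcategory). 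So it suffices to check \eqref{eq:rigidity} is an isomorphism for $Y$ ranging over a set of compact generators, and then — since both sides are now exact functors of $X$ preserving coproducts — for $X$ ranging over compact generators as well. One then reduces to an explicit check for $X, Y$ of the form $\iota_A M$, where the map $\fHom(X,\one)\otimes_R Y \to \fHom(X,Y)$ becomes a concrete map of bounded complexes of finitely generated projective modules that can be identified with a standard evaluation/Hom–tensor map, which is an isomorphism precisely because all the complexes in sight are finitely generated and projective in each degree (this is where regularity of $R$ enters: it guarantees $k(\fp)$, and more generally the relevant complexes, are perfect, so that the passage to $\dbcat{\op A}$ via \eqref{eq:kprojc} is available and the finiteness is genuine).

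The main obstacle I expect is the bookkeeping around the internal Hom $\fHom(-,-) = \bfj\Hom_R(-,-)$: one must be careful that $\Hom_R(Y,Z)$ for $Y,Z$ in $\KProj A$ really lands (after applying $\bfj$) in a complex one can control, and in particular that for $Y$ compact the functor $\fHom(Y,-)$ preserves coproducts and sends compacts to compacts. The cleanest way around this is to avoid $\bfj$ where possible: work entirely inside the subcategory of bounded complexes of finitely generated projective $A$-modules (which by \eqref{eq:kprojc} is $\KProjc A$, equivalently $\op{\dbcat{\op A}}$), where $\Hom_R$ into $R$ already produces objects of the same type so that $\bfj$ is not needed, prove rigidity there, and only then extend to all of $\KProj A$ by the coproduct-preservation argument above. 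A secondary subtlety is the role of regularity: without it, $k(\fp)$ need not be perfect over $R$, the identification of compacts in $\KProj A$ with $\dbcat{\op A}$ relies on $A$ being finite projective over $R$ (which holds) but the comparison maps on $\Hom_R$-complexes can fail to be quasi-isomorphisms, so I would flag explicitly each place the hypothesis $R$ regular is used, matching the parenthetical remark in the statement that $\KProj A$ is not rigid otherwise.
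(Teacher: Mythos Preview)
Your high-level plan---verify that compacts are closed under $\otimes_R$ and that \eqref{eq:rigidity} is an isomorphism on compacts---is correct and is exactly what the paper does. But your execution has a genuine gap, and your decision to avoid the Gorenstein identification \eqref{eq:Gorenstein-kprojc} is precisely what causes it.

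The gap is your repeated assertion that compact objects in $\KProj A$ are \emph{bounded} complexes of finitely generated projectives. They are not. Via \eqref{eq:kprojc}, a compact object is $\iota_A M=\Hom_{\op A}(\pres_{\op A}M,A)$ for $M\in\dbcat{\op A}$; since $\pres_{\op A}M$ is only bounded above---and genuinely unbounded whenever $M$ has infinite projective dimension over $\op A$, which is the typical situation here---the complex $\iota_A M$ is only bounded \emph{below}. Your argument that ``$\otimes_R$ of two such compact objects is again bounded \ldots\ hence K-projective and compact'' therefore fails: a bounded-below complex of finitely generated projectives need not be compact in $\KProj A$. The same error undermines your claim that $\one=\Hom_R(\pres_{\op A}R,R)$ is bounded, and your proposal to ``work entirely inside the subcategory of bounded complexes of finitely generated projective $A$-modules (which by \eqref{eq:kprojc} is $\KProjc A$)'' rests on a false identification.

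The paper's proof is short because it \emph{does} use \eqref{eq:Gorenstein-kprojc}, and you should too. Your worry that ``at this point $A$ is only assumed a Hopf algebra over a regular ring'' is misplaced: the fibres $\fibre A{\fp}$ are finite dimensional cocommutative Hopf algebras, hence self-injective, so $A$ is fibrewise self-injective and therefore Gorenstein by Proposition~\ref{pr:gor=fibre}. (The paper records this explicitly a little later in the section, but it is available.) Under the equivalence $\KProjc A\simeq\dbcat A$ the product $\otimes_R$ corresponds to $\lotimes_R$ and the dual $\fHom(-,\one)$ corresponds to $\RHom_R(-,R)$, so the two conditions become: for $M,N\in\dbcat A$, the complex $M\lotimes_R N$ lies in $\dbcat A$, and the evaluation map
\[
\RHom_R(M,R)\lotimes_R N\lra\RHom_R(M,N)
\]
is an isomorphism. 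Both are immediate once one notes that regularity of $R$ forces every object of $\dbcat A$ to be perfect as an $R$-complex. That is the entire argument; no bookkeeping with $\bfj$ or coproduct-reduction is needed.
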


\begin{proof}
  As noted above, it suffices to verify that for any $X,Y$ in
  $\KProjc A$, the complex $X\otimes_RY$ is also in $\KProjc A$ and
  that \eqref{eq:rigidity} is an isomorphism. Given the identification
  \eqref{eq:Gorenstein-kprojc} of compact objects in $\KProj A$, this is
  tantamount to verifying that for $M,N$ in $\dbcat A$, the complex
  $M\lotimes_RN$ is in $\dbcat A$ and  the map
\[
\RHom_R(M,R) \lotimes_R N \lra \RHom_R(M,N)
\]
of $A$-complexes is an isomorphism. Both properties are clear, since $R$ is regular.
\end{proof}

\begin{remark}
  The functor $\pi\colon\KProj A\to\KProj{A_k}$ from
  \eqref{eq:adjunction} is a tensor functor which fits -- together with its
  adjoints -- into the framework discussed in  \cite{Balmer/DellAmbrogio/Sanders:2016a}.
\end{remark}

\subsection*{Fibrewise criterion}

Here is the analogue of the fibrewise criterion for detecting membership in localising subcategories, Theorem~\ref{th:lg-kproj}, in the presence of the tensor product. In the statement $\Loc^{\otimes}(Y)$ denotes the tensor ideal localising subcategory generated by $Y$.

\begin{theorem}
\label{th:lg-kproj-ha}
Let $R$ be a regular ring and $A$ a finite cocommutative Hopf $R$-algebra. Let $X,Y$ be objects in $\KProj A$. The following conditions are equivalent. 
\begin{enumerate}[\quad\rm(1)]
\item
 $X\in  \Loc^{\otimes}(Y)$ in $\KProj A$;
\item
 $\fibre X\fp\in \Loc^{\otimes}(\fibre Y{\fp})$ in $\KProj{\fibre A\fp}$ for each $\fp\in \Spec R$.
\end{enumerate}

\end{theorem}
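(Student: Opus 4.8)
The plan is to reduce the tensor-ideal statement to the non-tensor one, Theorem~\ref{th:lg-kproj}, by the standard device of absorbing the tensor ideal into the generator. The implication (1)$\Rightarrow$(2) is again immediate, since for each $\fp$ the fibre functor $X\mapsto \fibre X\fp$ is an exact tensor functor (the tensor structures on $\KProj A$ and $\KProj{\fibre A\fp}$ are both given by $-\otimes-$ over the respective base rings, and $-\otimes_R k(\fp)$ is monoidal) and preserves arbitrary coproducts; hence it carries $\Loc^{\otimes}(Y)$ into $\Loc^{\otimes}(\fibre Y\fp)$.

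For (2)$\Rightarrow$(1), the key observation is a description of $\Loc^{\otimes}(Y)$ as an ordinary localising subcategory generated by a \emph{set} of objects built from $Y$ and the compact generators. Concretely, since $\KProj A$ is rigidly compactly generated (Lemma~\ref{le:rigidity}) with a set of compact generators $\{C_i\}$ — for instance the objects $\iota_A M$ with $M$ running over a set of representatives of isomorphism classes in $\dbcat{\op A}$, via \eqref{eq:kprojc} — one has
\[
\Loc^{\otimes}(Y) = \Loc\bigl(\{\, C_i\otimes_R Y \mid i \,\}\bigr).
\]
Indeed, the right-hand side is contained in the left since $\Loc^{\otimes}(Y)$ is closed under coproducts, triangles, and tensoring with anything; conversely the right-hand side is a localising subcategory, and it is a tensor ideal because tensoring with a compact object is a coproduct-preserving exact functor and the $C_i$ generate, so $-\otimes_R Y$ applied to the right-hand side lands back inside it by a standard generation argument, and it contains $Y\cong \one\otimes_R Y$ since $\one$ is compact. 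This identity, applied also over each fibre $k(\fp)$ — where the relevant compact generators of $\KProj{\fibre A\fp}$ are exactly the fibres $\fibre{C_i}{\fp}$, again by \eqref{eq:kprojc} applied to $\op{\fibre A\fp}$ together with the identification $\pi\iota_A(M)\cong\iota_{A_k}(k\lotimes_R M)$ established in the proof of Lemma~\ref{le:unit} — reduces the theorem to Theorem~\ref{th:lg-kproj}.

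The argument then runs as follows. Assume (2). Fix $i$; we want $C_i\otimes_R X \in \Loc(\{C_j\otimes_R Y\}_j)$ in $\KProj A$. By Theorem~\ref{th:lg-kproj} it suffices to check the fibrewise condition: for each $\fp$,
\[
\fibre{(C_i\otimes_R X)}{\fp} \in \Loc\bigl(\{\, \fibre{(C_j\otimes_R Y)}{\fp}\mid j\,\}\bigr) \quad\text{in } \KProj{\fibre A\fp}.
\]
Now $\fibre{(C_i\otimes_R X)}{\fp} \cong \fibre{C_i}{\fp}\otimes_{k(\fp)}\fibre X\fp$ and likewise for $Y$, so the right-hand side equals $\Loc\bigl(\{\fibre{C_j}{\fp}\otimes_{k(\fp)}\fibre Y\fp\}_j\bigr) = \Loc^{\otimes}(\fibre Y\fp)$ by the fibrewise form of the displayed identity, and the membership holds because $\fibre X\fp\in\Loc^{\otimes}(\fibre Y\fp)$ by hypothesis (2) and $\Loc^{\otimes}(\fibre Y\fp)$ is closed under tensoring with $\fibre{C_i}{\fp}$. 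Hence $C_i\otimes_R X\in\Loc(\{C_j\otimes_R Y\}_j)=\Loc^{\otimes}(Y)$ for every $i$; since the $C_i$ compactly generate, taking $i$ with $C_i=\one$ (or resolving $X$ out of the $C_i$) gives $X\cong\one\otimes_R X\in\Loc^{\otimes}(Y)$, which is (1).

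The main obstacle I anticipate is not any single deep step but the bookkeeping around the identity $\Loc^{\otimes}(Y)=\Loc(\{C_i\otimes_R Y\})$ and its compatibility with the fibre functors: one must be careful that a \emph{fixed} set of compact generators of $\KProj A$ has fibres that generate $\KProj{\fibre A\fp}$ for every $\fp$ simultaneously, and that the monoidal structure is strictly respected by $-\otimes_R k(\fp)$. Both are handled by the explicit description of compacts in \eqref{eq:kprojc} and the computation $\pi\iota_A(M)\cong\iota_{A_k}(k\lotimes_R M)$ from Lemma~\ref{le:unit}, together with the fact (from regularity of $R$) that $k(\fp)\lotimes_R M$ ranges over all of $\dbcat{\op{\fibre A\fp}}$ as $M$ ranges over $\dbcat{\op A}$ up to thick closure — which is all that is needed since generation only sees the thick/localising closure. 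Once this is in place, the reduction to Theorem~\ref{th:lg-kproj} is formal.
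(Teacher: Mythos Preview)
Your proof is correct and takes a genuinely different route from the paper's. The paper argues (2)$\Rightarrow$(1) by invoking the tensor-ideal version of the local-to-global theorem \cite[Theorem~7.2]{Benson/Iyengar/Krause:2011a} directly, reducing to the local case, and then using the projection formula $U\otimes_R\pi_rV\cong\pi_r(\pi(U)\otimes_kV)$ together with the essential surjectivity of $\pi$ up to summands (Lemma~\ref{le:unit}(4)) to transport the fibrewise tensor-ideal containment back up to $\KProj A$. You instead absorb the tensor ideal into the generator via the standard identity $\Loc^{\otimes}(Y)=\Loc(\{C_i\otimes_RY\}_i)$, valid in any rigidly compactly generated tensor-triangulated category, and then appeal to the already-proved non-tensor Theorem~\ref{th:lg-kproj}. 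Your approach is more elementary in that it recycles Theorem~\ref{th:lg-kproj} wholesale and avoids both the tensor local-to-global theorem and the projection formula; the only additional input is that the fibres $\fibre{C_i}{\fp}$ still generate $\KProjc{\fibre A\fp}$ up to thick closure, which you correctly extract from Lemma~\ref{le:unit} (and which the paper's proof also uses, in the guise of $\pi$ being essentially surjective up to summands). The paper's approach, by contrast, is more structural: it exhibits the compatibility of the tensor product with the local-cohomology functors and the adjunction $(\pi,\pi_r)$, which is of independent interest. A minor simplification of your argument: you need not treat all $i$ and then specialise to $C_i=\one$; applying Theorem~\ref{th:lg-kproj} directly to $X$ and the single object $\bigoplus_j C_j\otimes_RY$ already gives $X\in\Loc(\{C_j\otimes_RY\})=\Loc^{\otimes}(Y)$.
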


\begin{proof}
  The argument follows the same lines as that for
  Theorem~\ref{th:lg-kproj}, using in addition that $\KProj A$ is a
  rigidly generated tensor-triangulated category by
  Lemma~\ref{le:rigidity}.  Again it is straightforward to verify
  (1)$\Rightarrow$(2), once one observes that the functor $\pi$ from
  \eqref{eq:adjunction} respects the tensor products: For $X,Y$ in
  $\KProj A$, there is a natural isomorphism
\[
\pi(X\otimes_RY) \cong \pi(X) \otimes_{k(\fp)} \pi(Y) \quad\text{in $\KProj{\fibre A{\fp}}$. }
\]

For the implication (2)$\Rightarrow$(1) we use the version of the
local-to-global theorem~\eqref{eq:local-to-global} for
tensor-triangulated categories in
\cite[Theorem~7.2]{Benson/Iyengar/Krause:2011a}. Then the task reduces
to proving for $X,Y$ in $\KProj A$ that when $(R,\fm,k)$ is a local
ring and $X_k$ is in $\Loc^{\otimes}(Y_k)$, the complex $\pi_r\pi(X)$
is in $\Loc^{\otimes}(\pi_r\pi(Y))$. When reducing to the local case,
one uses that for each $\fp$ in $\Spec R$ the category $\KProj{A_\fp}$
identifies with a localising tensor ideal of $\KProj A$ via
\eqref{eq:localisation}.

Whilst the functor $\pi_r$ need not respect tensor products, the
following \emph{projection formula} holds. For $U$ in $\KProj A$ and
$V$ in $\KProj {A_k}$, there is a natural isomorphism
\[
U\otimes_R \pi_r V \cong \pi_r(\pi(U)\otimes_k V)\,.
\]
One can verify this directly, but this is a general fact about tensor
functors and their right adjoints between rigidly generated
tensor-triangulated categories; see
\cite[Theorem~1.3]{Balmer/DellAmbrogio/Sanders:2016a}. This formula
and the fact that, up to direct summands, $\pi$ is surjective on
objects -- see Lemma~\ref{le:unit} -- yield the desired result.
\end{proof}

\subsection*{Finite generation}
Let $R$ be a commutative noetherian ring and $A$ a finite cocommutative Hopf algebra over $R$. Set
\[
S\colonequals \Ext_A^*(R,R)\,.
\]
This is a graded-commutative $R$-algebra. The ring $S$ can be realised as the graded-ring of morphisms
\[
S\cong \Hom^*_{\bfK(A)}(\one,\one)\,.
\]
Since $\one$ is the unit of the tensor product on $\KProj A$,  it has a natural $S$-linear action on it. For each $\fp$ in $\Spec R$ one has that $\fibre A{\fp}$ is a finite dimensional cocommutative Hopf algebra over $k(\fp)$. Set
\begin{equation}
\label{eq:sfp}
S(\fp)\colonequals \Ext_{\fibre A\fp}^*(k(\fp),k(\fp))\,.
\end{equation}
A result of Friedlander and Suslin~\cite{Friedlander/Suslin:1997a} yields that the graded-commutative $k(\fp)$-algebra $S(\fp)$ is finitely generated; equivalently, that it is noetherian.  Throughout we assume that the $R$-algebra $S$ is itself finitely generated. 

\subsection*{Cohomological support}
Let $R$ be a commutative noetherian ring, $A$ a finite cocommutative
Hopf algebra over $R$, and $S$ the cohomology ring introduced
above. We write $\Spec S$ for the homogenous prime ideals in
$S$. Following \cite[\S5]{Benson/Iyengar/Krause:2008a}, the action of $S$
on $\KProj A$ gives rise to a notion of \emph{support} for objects in
$\KProj A$. Namely for each $\fq$ in $\Spec S$ there is a \emph{local
  cohomology functor} functor
$\gam_{\fq}$, defined akin to
\eqref{eq:gammap}. The support of an object $X\in \KProj A$ is by
definition the set
\[
\supp_S X\colonequals \{\fq\in\Spec S\mid \gam_{\fq}X\ne 0\}\,,
\]
and for any class of objects $\cat X$ we set
\[\supp_S\cat X\colonequals\bigcup_{X\in\cat X}\supp_S X\,.\]

\subsection*{Support and fibres}
Theorem~\ref{th:lg-kproj-ha} yields a stratification -- see the
discussion further below -- of $\KProj A$ in terms of subsets of the space
\[
\bigsqcup_{\fp\in\Spec R} \Spec S(\fp)\,,
\]
where to each $X$ in $\KProj A$ we associate the subset 
\[
\bigsqcup_{\fp\in\Spec R} \supp_{S(\fp)} \fibre X{\fp}\,.
\]
The task is to relate this to $\supp_S X$, viewed as a subset of $\Spec S$.

To that end consider the structure map $\eta\colon R\to S$, which induces a map
\[
\eta^{a}\colon \Spec S \lra \Spec R\,.
\]
The fibre of this map over $\fp$ is 
\[
(\eta^{a})^{-1}(\fp) = \Spec(S\otimes_R k(\fp))\,,
\]
which we identify with a subset of $\Spec S$ in the usual way. 

The functor $-\lotimes_R k(\fp)$ induces a map of $R$-algebras
\[
S=\Ext^*_A(R,R) \lra \Ext_{\fibre A\fp}^*(k(\fp),k(\fp))=S(\fp)\,.
\]
This induces the map of graded $k(\fp)$-algebras
\[
\kappa_{\fp}\colon S\otimes_R k(\fp)  \lra  S(\fp)\,.
\]
Even in the best of cases, one does not expect this to be an isomorphism. Consider the induced map on spectra:
\begin{equation}
\label{eq:fibre-map}
\kappa^a_{\fp}\colon \Spec S(\fp) \lra \Spec (S\otimes_R k(\fp))\subseteq \Spec S\,.
\end{equation}

The result below tracks the behavior of supports as we pass to the fibres. 

\begin{lemma}
\label{le:support-fibre}
Let $R$ be a regular ring and  $A$ a finite cocommutative Hopf $R$-algebra. Fix $\fp$ in $\Spec R$ and let $\kappa^a_{\fp}$ be the map in \eqref{eq:fibre-map}. For each $X$ in $\KProj A$  there is an equality
\[
\kappa^a_{\fp}(\supp_{S(\fp)}\fibre X{\fp}) = \supp_SX \cap (\eta^{a})^{-1}(\fp)\,.
\]
\end{lemma}

\begin{proof}
  We can reduce to the case where $(R,\fm,k)$ is a local ring and
  $\fp=\fm$, the maximal ideal of $R$. Set
  $S_k\colonequals S\otimes_Rk =S/\fm S$. Via the map $\kappa_\fm\colon S_k\to S(\fm)$
the $S(\fm)$-action on $\KProj{A_k}$ induces an $S_k$-action. Applying \cite[Corollary~7.8(1)]{Benson/Iyengar/Krause:2012b} to the identity functor on $\KProj {A_k}$ yields an equality 
\[
\kappa^a_{\fm}(\supp_{S(\fm)}X_k) = \supp_{S_k} X_k\,.
\]
It thus suffices to work with the subset on the right.

Consider the adjunction \eqref{eq:adjunction}.  For any compact object $C$ in $\KProj A$ one has isomorphisms of graded $S_k$-modules
\begin{align*}
\Hom_{\mathbf K(A_k)}^*(\pi C,X_k) 
	&= \Hom_{\mathbf K(A_k)}^*(\pi C,\pi X)  \\
	& \cong \Hom_{\mathbf K(A)}^*(C,\pi_r\pi X) \\
	& \cong \Hom_{\mathbf K(A)}^*(C, X\otimes_RK)
\end{align*}
where the last isomorphism is by Lemma~\ref{le:unit}. Any compact
object in $\KProj{A_k}$ is a direct summand of $\pi C$ for some compact object $C$ in $\KProj A$
by Lemma~\ref{le:unit}. By \cite[Theorem~5.2]{Benson/Iyengar/Krause:2008a}, one can compute $\supp_{S_k}(X_k)$ from the
support of the $S_k$-modules $\Hom_{\mathbf K(A_k)}^*(\pi C,X_k)$. This gives the first equality below
\begin{align*}
\supp_{S_k} X_k 
	&=\supp_{S}(X\otimes_RK) \\
	&= \supp_{S}X\cap V(\fm S) \\
	&=\supp_{S}X\cap (\eta^a)^{-1}(\fm)\,.
\end{align*}
The second one holds because $X\otimes_RK$ represents $\kos X{\fm  S}$; see  \cite[Lemma~2.6]{Benson/Iyengar/Krause:2011a}.
\end{proof}

\subsection*{Stratification}
Let $S$ be a graded commutative noetherian ring and $\cat T$ a rigidly
compactly generated tensor-triangulated category. We denote by
$\cat T^{\mathrm c}$ the full subcategory of compact objects.  We say
that $\cat T$ is \emph{$S$-linear} to mean that $S$ acts on $\cat T$
via a map of graded rings
\[
S\lra \mathrm{End}^*_{\cat T}(\one)\,;
\]
see \cite[\S7]{Benson/Iyengar/Krause:2011a}. We are mainly interested
in the case 
\[\cat T=\KProj A\qquad\text{with}\qquad \cat T^{\mathrm c}\cong\dbcat A\] for a cocommutative Hopf algebra $A$ over $R$, and $S$ its cohomology algebra as above, but it is convenient to make a few observations in greater generality. 

In the context above, one says that the tensor-triangulated category
$\cat T$ is \emph{stratified} by $S$ if for each $\fq$ in $\Spec S$
the category $\gam_{\fq}\cat T$, consisting of the $\fq$-local and
$\fq$-torsion objects in $\cat T$, is either zero or minimal, in that,
it has no proper localising tensor ideals; see
\cite[\S7.2]{Benson/Iyengar/Krause:2011a}. When this holds one has
 a bijection
\begin{equation}
\label{eq:classification}
\left\{
\begin{gathered}
  \text{Localising tensor ideals of $\cat T$}
\end{gathered}
\right\} \xra{\ \supp_S(-)\ } \{
\begin{gathered}
  \text{Subsets of $\supp_{S} \one$}
\end{gathered}
\}\,.
\end{equation}
When in addition the graded $S$-module $\Ext^*_A(M,M)$ is finitely generated for each $M\in \rmod A$,  the subset $\supp_SM$ of $\Spec S$ is closed and, by \cite[Theorem~6.1]{Benson/Iyengar/Krause:2011a}, the bijection above restricts to a bijection
\begin{equation}
\label{eq:thick-classification}
\left\{
\begin{gathered}
  \text{Thick tensor ideals of ${\cat T}^{\mathrm c}$}
\end{gathered}
\right\} \xra{\ \supp_S(-)\ } \left\{
\begin{gathered}
  \text{Specialisation closed}\\ \text{subsets of $\supp_{S} \one$}
\end{gathered}
\right\}\,.
\end{equation}

Here is a slightly different perspective on the stratification property.

\begin{lemma}
\label{le:stratification}
Let $\cat T$ be an $S$-linear tensor-triangulated category as above. Then the tensor-triangulated category $\cat T$ is stratified by $S$ if and only if for any $X,Y$ in $\cat T$ there are equivalences
\begin{align*}
\supp_SX\subseteq \supp_SY 
	&\iff \Loc^{\otimes}(X) \subseteq \Loc^{\otimes}(Y) \\
 	&\iff X\in \Loc^{\otimes}(Y)\,.
\end{align*} 
\end{lemma}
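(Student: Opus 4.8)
The plan is to prove the two equivalences by first handling the "easy" one and then reducing the "hard" one to the definition of stratification via the local‑to‑global principle.

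First I would observe that the implication $X\in\Loc^\otimes(Y)\Rightarrow\Loc^\otimes(X)\subseteq\Loc^\otimes(Y)$ is formal (a localising tensor ideal containing $X$ contains everything $X$ generates), and conversely $\Loc^\otimes(X)\subseteq\Loc^\otimes(Y)\Rightarrow X\in\Loc^\otimes(Y)$ is immediate since $X\in\Loc^\otimes(X)$. So the two right‑hand conditions are equivalent with no hypothesis at all, and the real content is the equivalence of $\supp_S X\subseteq\supp_S Y$ with $X\in\Loc^\otimes(Y)$. Also $X\in\Loc^\otimes(Y)\Rightarrow\supp_S X\subseteq\supp_S Y$ always holds, because support is monotone on localising tensor ideals (each $\gamma_\fq$ is exact, coproduct‑preserving, and tensoring with any object cannot create new points of support — this is \cite[\S5]{Benson/Iyengar/Krause:2008a}, or can be extracted from the properties of $\gamma_\fq$ recalled earlier). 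Thus in both directions of the biconditional the only thing to prove is: $\cat T$ is stratified by $S$ if and only if for all $X,Y$, $\supp_S X\subseteq\supp_S Y$ implies $X\in\Loc^\otimes(Y)$.

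For the forward direction, assume $\cat T$ is stratified and $\supp_S X\subseteq\supp_S Y$. By the local‑to‑global theorem, $\Loc^\otimes(X)=\Loc^\otimes(\{\gamma_\fq X\mid \fq\in\Spec S\})$, so it suffices to show each $\gamma_\fq X\in\Loc^\otimes(Y)$. Fix $\fq$; if $\gamma_\fq X=0$ there is nothing to prove, and otherwise $\fq\in\supp_S X\subseteq\supp_S Y$, so $\gamma_\fq Y\neq 0$. Now $\gamma_\fq X$ and $\gamma_\fq Y$ both lie in $\gamma_\fq\cat T$, which by stratification is a minimal localising tensor ideal; hence the smallest localising tensor ideal containing the nonzero object $\gamma_\fq Y$ is all of $\gamma_\fq\cat T$, so $\gamma_\fq X\in\Loc^\otimes(\gamma_\fq Y)\subseteq\Loc^\otimes(Y)$ (the last inclusion because $\gamma_\fq Y\in\Loc^\otimes(Y)$, $\gamma_\fq$ being a colocalisation functor built from $\otimes$ and triangles). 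For the converse, assume the support‑to‑containment implication and fix $\fq\in\Spec S$; I must show $\gamma_\fq\cat T$ has no proper nonzero localising tensor ideal. Let $\cat C$ be such an ideal with a nonzero object $Y\in\cat C$, and let $X$ be any object of $\gamma_\fq\cat T$. One checks $\supp_S Y=\{\fq\}$ (or $\varnothing$, but $Y\neq0$ forces it to be $\{\fq\}$ since $Y$ is $\fq$‑local and $\fq$‑torsion) and likewise $\supp_S X\subseteq\{\fq\}$, so $\supp_S X\subseteq\supp_S Y$; the hypothesis gives $X\in\Loc^\otimes(Y)$, and intersecting with $\gamma_\fq\cat T$ shows $X\in\cat C$. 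Hence $\cat C=\gamma_\fq\cat T$, proving minimality.

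The main obstacle I expect is bookkeeping around the precise meaning of "$\gamma_\fq\cat T$ has no proper localising tensor ideals" versus statements about $\Loc^\otimes$ inside all of $\cat T$: one needs the standard fact that $\gamma_\fq$ is smashing and that localising tensor ideals of $\gamma_\fq\cat T$ correspond to localising tensor ideals of $\cat T$ supported at $\fq$, together with the assertion $\supp_S\gamma_\fq X=\supp_S X\cap\{\fq\}$ and that $\gamma_\fq X=0$ exactly when $\fq\notin\supp_S X$ — all of which are part of the general machinery of \cite{Benson/Iyengar/Krause:2008a,Benson/Iyengar/Krause:2011a} that we are entitled to cite. Everything else is a formal manipulation of localising tensor ideals and the local‑to‑global principle, so the proof should be short once these citations are pinned down.
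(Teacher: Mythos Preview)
Your proof is correct and follows essentially the same approach as the paper's. The paper's proof is terser: it cites the characterization $\gamma_\fq\cat T=\{X\in\cat T\mid\supp_S X\subseteq\{\fq\}\}$ from \cite[Corollary~5.9]{Benson/Iyengar/Krause:2008a}, deduces one direction immediately (the stated property implies minimality of each $\gamma_\fq\cat T$), and declares the converse ``equally clear''; you have simply unpacked both directions in full, invoking the local-to-global theorem explicitly for the forward direction and handling the bookkeeping you flagged.
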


\begin{proof}
  We use the fact that for each $\fq$ in $\Spec S$ we have
  \[\gam_\fq\cat T=\{X\in\cat T\mid\supp_S X \subseteq\{\fq\}\}\]
  by \cite[Corollary~5.9]{Benson/Iyengar/Krause:2008a}.
Evidently when the stated property holds the localising tensor ideal $\gam_{\fq}\cat T$ of $\cat T$ is minimal for each $\fq$ in $\Spec S$, so $\cat T$ is stratified by $S$ as a tensor-triangulated category. The converse is equally clear.
\end{proof}

Here is one of the main results of our work. When it applies, one gets
a classification of the localising tensor ideals of $\KProj A$ and
also the thick tensor ideals of its subcategory of compact objects,
which identifies with $\dbcat A$.

\begin{theorem}
\label{th:homeo=stratification}
Let $R$ be a regular ring and $A$ a finite cocommutative Hopf algebra
over $R$ such that the $R$-algebra $S\colonequals \Ext^*_A(R,R)$ is finitely generated. If the map $\kappa^a_\fp$ in \eqref{eq:fibre-map} is bijective for each $\fp$ in $\Spec R$, then the tensor-triangulated category $\KProj A$ is stratified by the action of $S$, and $\supp_S \KProj A = \Spec S$.
\end{theorem}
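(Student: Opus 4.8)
The plan is to reduce the global stratification statement to the fibrewise stratifications provided by \cite{Benson/Iyengar/Krause/Pevtsova:2018a}, using the fibrewise membership criterion (Theorem~\ref{th:lg-kproj-ha}) to transport information between the fibres and $\KProj A$, and Lemma~\ref{le:support-fibre} to keep track of supports along the way. By Lemma~\ref{le:stratification}, it suffices to show that for all $X,Y$ in $\KProj A$ the inclusion $\supp_S X\subseteq\supp_S Y$ implies $X\in\Loc^{\otimes}(Y)$; the reverse implications and the minimality reformulation are automatic. So assume $\supp_S X\subseteq\supp_S Y$.

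First I would fix $\fp\in\Spec R$ and pass to the fibre. By Lemma~\ref{le:support-fibre}, intersecting the hypothesis with $(\eta^a)^{-1}(\fp)$ gives
\[
\kappa^a_\fp(\supp_{S(\fp)}\fibre X{\fp})=\supp_S X\cap(\eta^a)^{-1}(\fp)\subseteq \supp_S Y\cap(\eta^a)^{-1}(\fp)=\kappa^a_\fp(\supp_{S(\fp)}\fibre Y{\fp})\,.
\]
Since $\kappa^a_\fp$ is assumed bijective, it follows that $\supp_{S(\fp)}\fibre X{\fp}\subseteq\supp_{S(\fp)}\fibre Y{\fp}$. Now $\fibre A{\fp}$ is a finite dimensional cocommutative Hopf algebra over the field $k(\fp)$, and $S(\fp)=\Ext^*_{\fibre A\fp}(k(\fp),k(\fp))$ is finitely generated by Friedlander--Suslin; the stratification theorem of \cite{Benson/Iyengar/Krause/Pevtsova:2018a} therefore applies and tells us that $\KProj{\fibre A\fp}$ is stratified by the action of $S(\fp)$. (Strictly this is stated for the stable module category $\uGProj$, or equivalently $\KacProj$, so I would note that the only objects of $\KProj{\fibre A\fp}$ not detected in that quotient are the K-projective, i.e.\ perfect, complexes — whose support is empty — and handle this discrepancy by working modulo $\kproj$ or by using that $\KProj{\fibre A\fp}$ and $\uGProj{\fibre A\fp}$ differ only by the trivially-supported summand $\dcat{\fibre A\fp}$; this bookkeeping is the one genuinely fiddly point.) By Lemma~\ref{le:stratification} applied to $\KProj{\fibre A\fp}$, the support inclusion yields $\fibre X{\fp}\in\Loc^{\otimes}(\fibre Y{\fp})$ in $\KProj{\fibre A\fp}$.

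Since this holds for every $\fp\in\Spec R$, Theorem~\ref{th:lg-kproj-ha} gives $X\in\Loc^{\otimes}(Y)$ in $\KProj A$, as required; hence $\KProj A$ is stratified by $S$. For the support computation, $\supp_S\KProj A=\supp_S\one$ always (the unit tensor-generates), and $\eta^a(\supp_S\one)=\Spec R$ because each fibre $\fibre\one\fp$ is the unit of $\KProj{\fibre A\fp}$, hence nonzero, so $\supp_{S(\fp)}\fibre\one\fp\neq\varnothing$ and by Lemma~\ref{le:support-fibre} together with the surjectivity half of the bijectivity hypothesis $\supp_S\one\cap(\eta^a)^{-1}(\fp)\neq\varnothing$; since moreover $\kappa^a_\fp$ is surjective onto $\Spec(S\otimes_R k(\fp))$ and $\supp_{S(\fp)}\fibre\one\fp=\Spec S(\fp)$ (the whole fibre stratifies the fibre category), we get $\supp_S\one\cap(\eta^a)^{-1}(\fp)=\Spec(S\otimes_R k(\fp))$ for every $\fp$, whence $\supp_S\one=\bigcup_\fp(\eta^a)^{-1}(\fp)=\Spec S$.

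The main obstacle I anticipate is the compatibility bookkeeping between $\KProj$ and $\uGProj$ (equivalently $\KacProj$) at the level of the fibres: the cited stratification result is phrased for the stable category, so I must be careful that the localising \emph{tensor} ideal generated by $\fibre Y\fp$ inside $\KProj{\fibre A\fp}$, when restricted to the acyclic part, matches what the stable-category statement controls, and that membership there lifts back to $\KProj{\fibre A\fp}$ — the point being that the perfect complexes form a tensor ideal with empty support that is contained in every localising tensor ideal containing any object with full support, or more simply that $\fibre X\fp$ decomposes along the recollement \eqref{eq:recollement} and each piece is handled separately. Modulo that, every step is either a direct invocation of a result already in the excerpt or a short support computation.
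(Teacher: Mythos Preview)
Your proposal is correct and follows essentially the same route as the paper: reduce via Lemma~\ref{le:stratification}, use Lemma~\ref{le:support-fibre} plus bijectivity of $\kappa^a_\fp$ to get the fibrewise support inclusion, invoke \cite{Benson/Iyengar/Krause/Pevtsova:2018a} on each fibre, and then apply Theorem~\ref{th:lg-kproj-ha}; the support computation is likewise the same. The one place you hedge---whether the stratification of \cite{Benson/Iyengar/Krause/Pevtsova:2018a} is stated for $\KProj{\fibre A\fp}$ or only for the stable category---is not actually an obstacle: that reference (see \S10 there) establishes stratification for $\KProj$ of a finite group scheme over a field directly, which is exactly what the paper cites without further comment.
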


\begin{proof}
  The main task is to verify that when $X,Y$ are objects in $\KProj A$
  with $\supp_SX\subseteq \supp_SY$, the complex $X$ is in
  $\Loc^{\otimes}(Y)$; see Lemma~\ref{le:stratification}. Since
  $\kappa_\fp^a$ is a homeomorphism for each $\fp$ in
  $\Spec R$, Lemma~\ref{le:support-fibre} yields an inclusion
\[
\supp_{S(\fp)}\fibre X{\fp}\subseteq \supp_{S(\fp)}\fibre Y{\fp}\,.
\]
Since $\fibre A{\fp}$ is a finite dimensional cocommutative Hopf algebra over $k(\fp)$, the triangulated category $\KProj{\fibre A\fp}$ is stratified by the action of it cohomology algebra, $S(\fp)$; this is the main result of \cite{Benson/Iyengar/Krause/Pevtsova:2018a}. Thus the inclusion above implies
\[
\fibre X{\fp}\in \Loc^{\otimes}(\fibre Y{\fp})\,.
\]
This holds for each $\fp$ in $\Spec R$, so we can apply Theorem~\ref{th:lg-kproj-ha}  to deduce that $X$ is in $\Loc^{\otimes}(Y)$ as desired.

It remains to observe that $\supp_S \one= \Spec S$, as follows, from example, from Lemma~\ref{le:support-fibre}, for $\fibre {\one}{\fp}$ is the unit of $\KProj {\fibre A{\fp}}$ and its support is $\Spec S(\fp)$.
\end{proof}

In Section~\ref{se:finite-groups} we prove that group algebras of
finite groups satisfy the hypotheses of the preceding result. Here is one more family of examples to which it applies.

\begin{example}
\label{ex:exterior}
Let $R$ be a regular ring.  Set $A\colonequals \wedge_R F$, the exterior algebra on a finite free $R$-module $F$. We view it as 
a $\bbZ/2$-graded Hopf algebra, with coalgebra structure defined by $\Delta(x)=x\otimes 1+ 1\otimes x$ for $x\in F$. In this case $\Ext^*_A(R,R)$ is the symmetric algebra on $\Hom_R(F,R)$. Given this it is clear that the hypotheses of Theorem~\ref{th:homeo=stratification} are satisfied in this case. 

Here is another family of examples: Suppose $k$ is field, $R$ a $k$-algebra, and that the Hopf algebra $A$ is of the form $R\otimes_k A'$ where $A'$ is a finite dimensional cocommutative Hopf algebra over $k$. Then $\Ext^*_A(R,R)\cong R\otimes_k \Ext_{A'}(k,k)$ as graded $R$-algebras. With this, it is easy to verify that $A$ falls under the purview of Theorem~\ref{th:homeo=stratification}.
\end{example}

Next we prepare to prove Theorem~\ref{ith:homeo=stratification} stated
in the introduction. 

\subsection*{Gorenstein projective modules}
Let $R$ be a regular ring and $A$ a finite cocommutative Hopf
$R$-algebra. The fibres $\fibre A{\fp}$ are finite dimensional
cocommutative Hopf algebras over $k(\fp)$, hence
self-injective~\cite[Lemma~I.8.7]{Jantzen:2003a}. Thus the $R$-algebra
$A$ is fibrewise self-injective and therefore Gorenstein, by
Proposition~\ref{pr:gor=fibre}. As $R$ is regular, Gorenstein
projective $R$-modules are projective by Lemma~\ref{le:regular}. Hence
a Gorenstein projective $A$-module is projective as an $R$-module, and
the converse holds if the module is finitely generated or $\dim R$ is
finite; see Lemma~\ref{le:symmetric}.

\begin{lemma}\label{le:GProj-Kac}
Let $R$ be a regular ring and $A$ a finite cocommutative Hopf algebra. The tensor product $-\otimes_R-$ with diagonal $A$-action endows $\uGProj A$ with a structure of a rigidly compactly generated tensor-triangulated category. This structure is compatible with the equivalence in Theorem~\ref{th:Buchweitz-big}.
\end{lemma}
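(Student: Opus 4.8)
The plan is to equip the Frobenius category $\GProj A$ with the diagonal tensor product $-\otimes_R-$, pass to the stable category, and then match the resulting structure with $\KProj A$ through the equivalence of Theorem~\ref{th:Buchweitz-big}. Two facts are used throughout: over the regular ring $R$ a Gorenstein projective $A$-module is projective, hence flat, over $R$ (Lemmas~\ref{le:regular} and~\ref{le:symmetric}); and the diagonal tensor product over $R$ of a projective $A$-module with a projective $R$-module is again projective over $A$. So, for Gorenstein projective $A$-modules $M$ and $N$, I would choose an acyclic complex $P$ of projective $A$-modules with $M$ among its syzygies; since $R$ is regular, $P$ is split exact over $R$ by Lemma~\ref{le:regular}(1), hence so is $P\otimes_RN$, which is therefore an acyclic complex of projective $A$-modules — its terms $P^i\otimes_RN$ are $A$-projective because $N$ is $R$-projective — with $M\otimes_RN$ among its syzygies; thus $M\otimes_RN$ is again Gorenstein projective. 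When $M$ is itself a projective $A$-module, $M\otimes_RN$ is projective, so $-\otimes_R-$ descends to a biadditive functor on $\uGProj A$; as every Gorenstein projective module is $R$-flat, tensoring with a fixed one is exact on $\GProj A$ and preserves the class of projectives, so $M\otimes_R-$ and $-\otimes_RN$ induce exact functors of triangulated categories on $\uGProj A$. Cocommutativity of $A$ makes $(\uGProj A,\otimes_R)$ symmetric monoidal with unit the trivial module $R$, which lies in $\Gproj A$ by Lemma~\ref{le:symmetric}.

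Next I would establish rigid compact generation. That $\uGProj A$ is compactly generated, with compact objects $\uGproj A$, is part of Theorem~\ref{th:Buchweitz-big}, and $\uGproj A$ is closed under $\otimes_R$ by the previous paragraph. Since the rigid objects of a tensor-triangulated category form a thick subcategory containing the tensor unit, it is enough to show that every compact generator $M\in\uGproj A$ is rigid. For such $M$ the $R$-dual $M^{\vee}\colonequals\Hom_R(M,R)$, with its diagonal $A$-action, is finitely generated and projective over $R$, hence Gorenstein projective over $A$ by Lemma~\ref{le:symmetric}; the module-level adjunction $\Hom_A(L\otimes_RM,N)\cong\Hom_A(L,\Hom_R(M,N))$ descends to $\uGProj A$ because $\Hom_R(M,-)\cong M^{\vee}\otimes_R-$ carries $A$-projectives to $A$-projectives, so $\fHom(M,-)=\Hom_R(M,-)$ there; and the rigidity map $\fHom(M,R)\otimes_RN=M^{\vee}\otimes_RN\to\Hom_R(M,N)=\fHom(M,N)$ is the canonical isomorphism, valid since $M$ is finitely generated and projective over $R$. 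This verifies the criterion for rigidity recalled just before Lemma~\ref{le:rigidity}.

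It remains to match this with the equivalence $X\mapsto\Coker(d^{-1}_X)$ of Theorem~\ref{th:Buchweitz-big}. I would first note that $\KacProj A$ is a localising tensor ideal of the tensor-triangulated category $\KProj A$ of Lemma~\ref{le:tensor-unit}: for $X\in\KacProj A$ and $Y\in\KProj A$ the complex $X$ is split exact over $R$ by Lemma~\ref{le:regular}(1), hence so is $X\otimes_RY$, which therefore lies in $\KacProj A$. Transporting along the equivalence then presents $\uGProj A$ as a localising tensor ideal, with tensor unit $\Coker(d^{-1})$ of a complete resolution of the trivial module $R$; to see that this agrees with the structure built above, one checks, for complete resolutions $X$ of $M$ and $X'$ of $N$, that $\Coker(d^{-1}_{X\otimes_RX'})\cong M\otimes_RN$ in $\uGProj A$. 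The useful input is that $X\otimes_RN$ — a complete resolution tensored with a module — is again an acyclic complex of projective $A$-modules with $\Coker(d^{-1}_{X\otimes_RN})\cong M\otimes_RN$, i.e. a complete resolution of $M\otimes_RN$, by the first paragraph; the comparison of $X\otimes_RX'$ with $X\otimes_RN$ then proceeds exactly as for group algebras over a field.

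I expect the genuine work to be concentrated in this last step: keeping track of the associativity, commutativity and unit constraints across the equivalence, and verifying that passing to the stable (equivalently, homotopy) category destroys neither the adjunction that defines $\fHom$ nor the identification of $\Coker(d^{-1})$ of a tensor product of complete resolutions with the tensor product of the corresponding Gorenstein projective modules. Each of these reduces, in the end, to the single observation that $-\otimes_R-$ sends $A$-projectives to $A$-projectives as soon as the other factor is projective over $R$.
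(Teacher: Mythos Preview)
Your proposal is correct and follows essentially the same approach as the paper: close $\GProj A$ under $\otimes_R$ via acyclic complexes of projectives, identify the unit as $R$, verify rigidity through the $R$-dual $M^\vee$ and the isomorphism $M^\vee\otimes_RN\cong\Hom_R(M,N)$ for $M$ finitely generated $R$-projective, and then check compatibility with the equivalence of Theorem~\ref{th:Buchweitz-big}. The only minor variation is that you argue acyclicity of $P\otimes_RN$ via split exactness of $P$ over the regular ring $R$, whereas the paper simply uses that $N$ is $R$-projective (hence flat); and you spell out the rigidity and compatibility steps that the paper dismisses as ``easy to verify'' and ``a straightforward computation''.
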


\begin{proof}
  If $X$ is an acyclic complex of projective $A$-modules and $N$ is a
  Gorenstein projective $A$-module, then the complex $X\otimes_RN$ of
  projective modules is also acyclic, for $N$ is projective as an
  $R$-module. It follows that if $M$ is a Gorenstein projective
  $A$-module, so is $M\otimes_RN$. Thus the category of Gorenstein
  projective $A$-modules is closed under $-\otimes_R-$, and $R$,
  viewed as an $A$-module via the augmentation $A\to R$ is the unit of
  this product. Observe that as an $A$-module $R$ is Gorenstein
  projective, for it is finitely generated, and projective as an
  $R$-module.  Since the $A$-module $P\otimes_RN$ is projective when
  $P$ is projective, this tensor product induces one on the stable
  category, $\uGProj A$, making it a tensor-triangulated category,
  with unit $R$. The function object on $\uGProj A$ is $\Hom_R(-,-)$,
  and given this it is easy to verify the rigid objects in it are precisely
  the compact objects, that is to say, isomorphism classes of the
  finitely generated Gorenstein projective modules. In summary,
  $\uGProj A$ is rigidly compactly generated.

  A straightforward computation shows that the assignment in
  Theorem~\ref{th:Buchweitz-big} is compatible with the tensor
  structures.
\end{proof}

Let $S\colonequals \Ext^*_A(R,R)$ be the cohomology algebra as
before. We write $\Prj S$ for the projective spectrum of $S$, namely,
those prime ideals in $\Spec S$ that do not contain the ideal
$S^{\geqslant 1}$ of positive degree elements.

\begin{lemma}\label{le:Kac-tensor-ideal}
  With the assumptions from Theorem~\ref{th:homeo=stratification}, the
  full subcategory  $\KacProj A$ of  $\KProj A$ is a localising tensor
  ideal with support $\Prj S$.
\end{lemma}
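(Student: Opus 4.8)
The plan is to prove that $\KacProj A$ is a localising tensor ideal and identify its support as $\Prj S$. I would proceed in three stages.

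\emph{Step 1: $\KacProj A$ is a localising tensor ideal.} It is standard that $\KacProj A$ is localising (it is the kernel of the Verdier quotient $\bfq\colon\KProj A\to\dcat A$, by~\eqref{eq:recollement}, and hence closed under coproducts and triangles). For the tensor ideal property one must show that if $X\in\KacProj A$ and $Y\in\KProj A$, then $X\otimes_R Y\in\KacProj A$; equivalently $\hh{X\otimes_R Y}=0$. Since $\KProj A$ is compactly generated and both $X\otimes_R-$ and the condition "lies in the localising subcategory $\KacProj A$" are preserved by coproducts and triangles, it suffices to check this when $Y$ is compact, hence (up to summands and shifts) a finitely generated projective $A$-module by~\eqref{eq:Gorenstein-kprojc}, and then when $Y=A$. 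But $A$ is finite projective over $R$, so $X\otimes_R A$ is, degreewise, a finite direct sum of copies of $X$ as an $R$-complex (after forgetting the $A$-structure one may phrase it more carefully using that $A_R$ is a direct summand of a free $R$-module), hence acyclic. Alternatively, and more cleanly: $X$ is acyclic and consists of projective $A$-modules, each of which is projective — hence flat — over $R$; since $A$ is $R$-flat, tensoring the acyclic $R$-complex $X$ with $A$ over $R$ keeps it acyclic, and the diagonal $A$-action makes $X\otimes_R A$ a complex of projective $A$-modules. This settles the tensor ideal claim.

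\emph{Step 2: $\supp_S \KacProj A \subseteq \Prj S$.} The complement of $\Prj S$ in $\Spec S$ is $V(S^{\geqslant 1})$, which consists of the single prime $S^{\geqslant 1}$-containing primes lying over each $\fp\in\Spec R$, i.e.\ those whose image under $\eta^a$ corresponds to the "degree-zero" part; concretely $\Spec S\setminus\Prj S$ maps bijectively (via $\eta$) onto $\Spec R$. I would use Lemma~\ref{le:support-fibre}: for $X\in\KacProj A$ and each $\fp$, the fibre $\fibre X{\fp}$ lies in $\KacProj{\fibre A{\fp}}$ (tensoring a split-acyclic $R$-complex of projectives by $k(\fp)$ stays acyclic, as in the proof of Lemma~\ref{le:gproj-fibre}), hence corresponds to a Gorenstein projective $\fibre A{\fp}$-module, i.e.\ an object of $\uGProj{\fibre A{\fp}}$. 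Now over the field $k(\fp)$ the support of any object of $\uGProj{\fibre A{\fp}}$, computed via $S(\fp)$, lies in $\Prj S(\fp)$ — this is the classical fact that the trivial module $k(\fp)$ itself, whose support is all of $\Spec S(\fp)$, is \emph{not} in $\uGProj{\fibre A{\fp}}$ (it is not projective, but all of $S(\fp)^{\geqslant 1}$ acts through genuine cohomology), so no object of the stable category sees the "irrelevant" point; equivalently $\gam_{S(\fp)^{\geqslant 1}}$ of any object of $\KacProj{\fibre A{\fp}}$ vanishes. Pushing through $\kappa^a_\fp$ — which by hypothesis is a bijection — and assembling over all $\fp$ via Lemma~\ref{le:support-fibre} gives $\supp_S X\subseteq\Prj S$.

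\emph{Step 3: $\supp_S\KacProj A\supseteq\Prj S$, and hence equality.} For the reverse inclusion it suffices to exhibit a single object of $\KacProj A$ whose support is all of $\Prj S$. The natural candidate is the complete resolution $\bft(R)$ of the trivial module $R$ — equivalently, under the equivalence $\KacProj A\iso\uGProj A$ of Theorem~\ref{th:Buchweitz-big} composed with the tensor structure of Lemma~\ref{le:GProj-Kac}, this corresponds to the image of $R$ in $\uGProj A$, whose fibres are the trivial $\fibre A{\fp}$-modules $k(\fp)$ viewed in $\uGProj{\fibre A{\fp}}$. Each such fibre has $\supp_{S(\fp)}=\Prj S(\fp)$ (this is again part of the Benson–Iyengar–Krause–Pevtsova stratification: the support of $k$ in the stable module category is all of $\Prj$ of its cohomology ring), so by Lemma~\ref{le:support-fibre} and the bijectivity of $\kappa^a_\fp$ we get $\supp_S\bft(R)\cap(\eta^a)^{-1}(\fp)=\kappa^a_\fp(\Prj S(\fp))$, and ranging over $\fp$ this union is exactly $\Prj S$. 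Combined with Step 2, $\supp_S\KacProj A=\Prj S$.

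\emph{Main obstacle.} The representation-theoretic and tensor-ideal parts (Step 1) are routine. The real content is Step 2 together with Step 3 — more precisely, pinning down that "over a field, objects of the stable/complete-resolution category have support inside $\Prj$ of cohomology, with the trivial module achieving all of it" and then transporting this correctly through the possibly non-injective, non-surjective map $\kappa_\fp$ using the hypothesis that $\kappa^a_\fp$ is a bijection on spectra. The bookkeeping with $\eta^a$, the decomposition $\Spec S=\Prj S\sqcup(\text{degree-zero locus}\cong\Spec R)$, and the repeated use of Lemma~\ref{le:support-fibre} is where care is needed; one must be sure the "irrelevant" primes over each $\fp$ are precisely the ones excluded by passing to the stable category fibrewise.
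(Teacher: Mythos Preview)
Your Steps 2 and 3 are essentially the paper's argument, only spelled out in more detail: the paper first observes that the fibre functor $(-)_{k(\fp)}$ carries the recollement~\eqref{eq:recollement} for $A$ to the one for $\fibre A{\fp}$, and then imports the fact from \cite[\S10]{Benson/Iyengar/Krause/Pevtsova:2018a} that over a field $\KacProj{\fibre A{\fp}}$ is a localising tensor ideal with support $\Prj S(\fp)$. Your use of Lemma~\ref{le:support-fibre} together with the bijectivity of $\kappa^a_\fp$ to transport $\Prj S(\fp)$ to $\Prj S\cap(\eta^a)^{-1}(\fp)$ is exactly what is implicit in the paper's one-line conclusion.

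Step 1, however, contains a genuine error. You claim that a compact object of $\KProj A$ is, up to summands and shifts, a finitely generated projective $A$-module, citing~\eqref{eq:Gorenstein-kprojc}. That equivalence says $\KProjc A\simeq\dbcat A$, and objects of $\dbcat A$ which are not perfect are represented in $\KProj A$ by \emph{unbounded} complexes; the thick subcategory generated by $A$ is only the perfect part. So your reduction to $Y=A$ fails, and your ``alternative'' argument also only treats $Y=A$. For unbounded $Y$, flatness of the terms of $X$ over $R$ is not enough to conclude that $X\otimes_R Y$ is acyclic.

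The fix is short and uses the standing hypothesis that $R$ is regular: by Lemma~\ref{le:regular}(1), the acyclic complex $X$ of projective $R$-modules is null-homotopic over $R$, hence $X\otimes_R Y$ is null-homotopic as an $R$-complex for \emph{every} $Y$, in particular acyclic. This disposes of the tensor ideal claim without any reduction to compacts. The paper avoids even this by arguing fibrewise: once one knows $(-)_{k(\fp)}$ preserves the recollement, membership in $\KacProj A$ is detected on fibres (via Theorem~\ref{th:lg-kproj} applied to $\pres(X\otimes_R Y)$), and over each field the tensor ideal property is already known.
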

\begin{proof}
  For each $\fp$ in $\Spec R$ the functor $X\mapsto X_{k(\fp)}$
  maps the recollement \eqref{eq:recollement} for $A$ to the
  corresponding recollement for $A_{k(\fp)}$. To see this, observe
  that the recollement \eqref{eq:recollement} is determined by
  functorial exact triangles $\pres X\to X\to \bft X\to$ for each $X$
  in $\KProj A$ such $\pres X$ is K-projective and $\bft X$ is
  acyclic. These properties are preserved by $(-)_{k(\fp)}$ since the
  functor is exact and preserves all coproducts. For K-projectives
  this is clear, since they are generated by perfect complexes which
  are preserved by $(-)_{k(\fp)}$. For acyclic complexes, see
  Lemma~\ref{le:gproj-fibre}.

  The assertion of the lemma now follows since $\KacProj {A_{k(\fp)}}$ is a
  localising tensor ideal of $\KProj {A_{k(\fp)}}$ with support
  $\Prj S(\fp)$; see \cite[\S10]{Benson/Iyengar/Krause/Pevtsova:2018a}.
\end{proof}

We are now ready to prove our stratification result for
representations of finite cocommutative Hopf algebras.

\begin{proof}[Proof of Theorem~\ref{ith:homeo=stratification}]
  We use the triangle equivalence $\uGProj A\iso \KacProj A$ from
  Theorem~\ref{th:Buchweitz-big}, which preserves the tensor structure
  and the $S$-action thanks to Lemma~\ref{le:GProj-Kac}.  Now the
  stratification of $\uGProj A$ via $S$ follows from
  Theorem~\ref{th:homeo=stratification}, since $\KacProj A$ is a
  localising tensor ideal of $\KProj A$ by
  Lemma~\ref{le:Kac-tensor-ideal}.  In particular, the support of
  $\uGProj A$ is precisely $\Prj S$.
\end{proof}

\section{Finite groups}
\label{se:finite-groups}

Let $G$ be a finite group. The main result of this section is that for $A=RG$, the group algebra of $G$ over any commutative noetherian ring $R$, the map \eqref{eq:fibre-map} is a homeomorphism for all primes in the spectrum of $R$. As a consequence we get a stratification theorem when $R$ is regular; see Theorem~\ref{th:stratification-RG}. In this case the cohomology algebra $\Ext^*_{RG}(R,R)$ is the group cohomology algebra. This is usually denoted $H^*(G,R)$, and we follow suit.

\subsection*{Cup products}
Let $R$ be a commutative ring, not necessarily noetherian,  and $M$ an $R$-module, both viewed  as $G$-modules with trivial action. The cup product makes $H^*(G,R)$  an $R$-algebra and $H^*(G,M)$  a module over $H^*(G,R)$. These are defined as follows: Let $P$ be a projective resolution of the trivial $\bbZ G$-module $\bbZ$ and $\Delta\colon P\to P\otimes_{\bbZ}P$ a diagonal approximation.  Given classes $x\in H^*(G,R)$ and $y\in H^*(G,M)$, represented by cocycles $\tilde x \in \Hom_{\bbZ G}(P,R)$ and $\tilde y\in \Hom_{\bbZ G}(P,M)$ the cup product $x \cup y$ is represented by the class of the composition of maps
\[
P \xrightarrow{\ \Delta\ } P\otimes_{\bbZ} P \xrightarrow{\ \tilde x \otimes \tilde y\ } R \otimes_{\bbZ} M \lra M\,.
\]
If $I\subseteq R$ is an ideal, $-\cup-$ defines a product on $H^*(G,I)$. It is clear from the definition that if $I$ is nilpotent of order $n$, then so is $H^*(G,I)$.

\subsection*{Infinitesimal deformations of coefficients}
Let $R\to R'$ be a  surjective map of commutative rings whose kernel, say $I$, satisfies $I^2=0$;  thus $I$ is an $R'$-module. One thinks of $R$ as  an infinitesimal deformation of $R'$. The exact sequence 
\begin{equation}
\label{eq:deformation}
0\lra I \lra R\lra R'\lra 0
\end{equation}
induces a connecting homomorphism
\[
\delta\colon H^{*}(G,R') \to H^*(G,I)\,.
\]
Since $I$ is an $R'$-module $H^*(G,I)$ is a module over $H^*(G,R')$, via the cup product.

The statement of the result below, and its proof, are a variation on \cite[Lemma~4.3.3]{Benson:1998c}.

\begin{lemma}
\label{le:derivation}
In the context above, for $x,y\in H^*(G,R')$  one has
\[
\delta(x\cup y) = \delta(x)\cup y + (-1)^{|x|}x\cup \delta(y)\,.
\]
\end{lemma}

\begin{proof}
As in the proof of \cite[Lemma~4.3.3]{Benson:1998c}, let $P$ be the projective resolution of the trivial $\bbZ G$-module $\bbZ$, and
$\Delta \colon P \to P \otimes_{\bbZ} P$  a diagonal approximation. The exact sequence~\eqref{eq:deformation} induces the exact sequence of complexes
\[ 
0 \lra \Hom_{\bbZ G}(P,I) \lra \Hom_{\bbZ G}(P,R) \lra \Hom_{\bbZ G}(P,R') \lra 0\,. 
\]
Represent $x$ and $y$ by cocycles $\tilde x$ and $\tilde y$ on the right hand side of this sequence. Then $x \cup y$ is represented by
the composite 
\[
\tilde x \cup \tilde y \colon P \xrightarrow{\ \Delta\ } P\otimes_{\bbZ} P \xrightarrow{\ \tilde x \otimes \tilde y\ } R' \otimes_{\bbZ} R'
\xrightarrow{\ \mu\ } R'
\]
where $\mu$ is the multiplication map. To compute the effect of the connecting homomorphism, we first lift $\tilde x$ and $\tilde y$ to
cochains $\hat x$ and $\hat y$ in  $\Hom_{\bbZ G}(P,R)$. Since $d\tilde x=0 = d\tilde y$, the elements $d\hat x$ and $d\hat y$ lie in $I$. 
The element $\tilde x\cup \tilde y$ lifts to $\hat x\cup \hat y$, and
\begin{align*}
d(\hat x \cup \hat y) 
&=d\hat x \cup \hat y + (-1)^{|x|}\hat x \cup d\hat y \\
&=d\hat x \cup \tilde y + (-1)^{|x|}\tilde x \cup d\hat y
\end{align*}
The second equality holds as $d\hat x$ and $d\hat y$ lie in $I$. This gives the stated equality.
\end{proof}

In what follows, we say that an abelian group $M$ is \emph{$p$-local}, for a prime number $p$, if the natural map $M\to M_{(p)}$ is an isomorphism. 

\begin{lemma}
\label{le:F-onto}
Let $p$ be a prime dividing $|G|$ and $\pi\colon R\to R'$ a map  of $p$-local rings,  with $\Ker(\pi)$ nilpotent. Then the map $H^*(G,\pi)$ has nilpotent kernel, and there exists an integer $n$ such that for any element $x\in  H^{\geqslant 1}(G,R')$, the element $x^{p^n}$ is in the image of the map $H^*(G,\pi)$. 
\end{lemma}

\begin{proof}
Set $I\colonequals \Ker(\pi)$. Since this ideal is nilpotent, so is $H^*(G,I)$, under cup products.  The claim about nilpotence is clear because, by the exact sequence in cohomology arising from \eqref{eq:deformation}, the kernel of $H^*(G,\pi)$ is the image of the map 
\[
H^*(G,I) \lra H^*(G,R)
\]
which respects cup products.

As to the second part of the statement,  it suffices to consider the case where $I^2=0$.  Let $n$ be the largest integer such that $p^n$ divides $|G|$.  Since $|G|$ annihilates $H^{\geqslant 1}(G,R')$, and the ring $R'$ and hence also $H^*(G,R')$ is $p$-local, one gets that
\[
p^n\cdot H^{\geqslant 1}(G,R')=0\,.
\]
If $|x|$ is odd, then $2\cdot x^2=0$, since $H^*(G,R')$ is graded-commutative. Thus if also $p$ is odd, then $x^2=0$, since we are in the $p$-local situation. Thus we can suppose either $|x|$ is even or $p=2$. In either case, a repeated application of Lemma~\ref{le:derivation} yields $\delta(x^i) = i x^{i-1}\delta(x)$ for each $i\ge 1$. In particular $\delta(x^{p^n})=0$. It then follows from the exact sequence in group cohomology arising from \eqref{eq:deformation} that $x^{p^n}$ is in the image of the map $H^*(G,R)\to H^*(G,R')$.
\end{proof}

\subsection*{Modules with bounded torsion}
Let $M$ is an abelian group such that its torsion-subgroup, denoted $\tors(M)$ is bounded; that is to say, there exists an integer $n$ such that $n\cdot \tors(M)=0$. Fomin~\cite{Fomin:1937a} proved that inclusion $\tors(M)\subseteq M$  splits; see also \cite[Corollary pp. 134]{Kaplansky:1952a}. This result will be used below.

\begin{lemma}
\label{le:bounded-torsion}
Let $p$ be a prime dividing $|G|$ and $M$ a $p$-local abelian group such that $\tors(M)$ is bounded. For all integers $s\gg 0$ the map
\[
H^{\geqslant 1}(G,M) \lra H^{\geqslant 1}(G,M/p^sM)
\]
induced by the surjection $M\to M/p^sM$, is one-to-one.
\end{lemma}

\begin{proof}
Since $M$ is $p$-local, the only torsion is $p$-torsion. Choose $s\gg 0$ such that 
\[
p^s \tors(M)=0= p^s H^{\geqslant 1}(G,M)\,.
\]
The equality on the left means that the sequence below, where the map $M\to p^sM$ is given by $m\mapsto p^sm$, is exact:
\[
0\lra \tors(M) \lra M \lra p^s M\lra 0\,.
\]
This is split-exact, by Fomin's result recalled above, so the induced map  
\[
H^*(G,M) \lra H^*(G,p^sM)
\]
is surjective.  The map $M\xra{ p^s }M$ factors as $M\to p^sM \to M$ where the one on the right is inclusion. By the choice of $p^s$, the composition of the induced maps
\[
H^*(G,M) \lra H^*(G,p^sM) \lra H^*(G,M)
\]
is zero in degrees $\geq 1$. Since the map on the left is surjective, it follows that the one on the right is zero in degrees $\geq 1$. Then the cohomology exact sequence arising from the exact sequence
\[
0\lra p^s M \lra M \lra M/p^sM \lra 0
\]
yields the desired statement. 
\end{proof}

\subsection*{Noetherian ring of coefficients}
The result below was proved by  Benson and
Habegger~\cite{Benson/Habegger:1987a} when $R= \bbZ$; the argument
given here is modeled on their proof.  A general result, allowing non-trivial $G$-action on $R$, was proved by Lau~\cite[Section~7]{Lau:Bsp}. 

Recall that a map of rings $f\colon S\to T$ containing a field of positive characteristic $p$ is an \emph{F-isomorphism} if $\ker(f)$ is nilpotent, and for each $t\in T$ there exists an $n$ such that $t^{p^n}$ is the image of $f$. 

\begin{theorem}
\label{th:f-iso-RG}
Let $G$ be a finite group and $R$ a  commutative noetherian  ring. For each prime number $p$, the map
\[ 
H^*(G,R) \otimes_R R/pR \lra H^*(G,R/pR) 
\]
is an F-isomorphism.
\end{theorem}

\begin{proof}
Set $R_{(p)}\colonequals \bbZ_{(p)}\otimes_{\bbZ} R$. As $R/pR$ is $p$-local, the map $R\to R/pR$ factors  through 
$R_{(p)}$. As localisation is an exact functor, there are natural isomorphisms
\begin{gather*}
H^*(G,R)\otimes_R R/pR \cong H^*(G,R_{(p)})\otimes_{R_{(p)}} R/pR \\
H^*(G,R/pR) \cong H^*(G,R_{(p)}/pR_{(p)})\,.
\end{gather*}
Thus replacing $R$ by $R_{(p)}$ we can assume $R$ is $p$-local.

For any finitely generated $R$-module $M$, the (additive) torsion submodule $\tors(M)$ is an $R$-submodule of $M$, and hence finitely generated as an $R$-module,  as $R$ is noetherian. It follows that $\tors(M)$ is bounded, so Lemma~\ref{le:bounded-torsion} applies. 

Choose an integer $s$ large enough that the conclusion of \emph{op.~cit.} applies to the $R$-modules $R$ and $pR$. Consider the commutative diagram of coefficients
\[ 
\begin{tikzcd}
0 \arrow{r} & pR \arrow{d}{\theta} \arrow{r}
	& R \arrow{d}{\pi} \arrow{r} & R/pR \arrow[equal]{d}\arrow{r} & 0 \\
0 \arrow{r} & pR/p^{s+1}R \arrow{r} & R/p^{s+1}R 
\arrow{r} & R/pR \arrow{r} & 0 
\end{tikzcd} 
\]
This induces a commutative diagram
\begin{equation*} 
\begin{tikzcd}
 H^*(G,R) \arrow{d}{\pi^*} \arrow{r}{\iota_1} & H^*(G,R/pR)
\arrow[equal]{d}\arrow{r}{\delta_1} & H^*(G, pR)
\arrow{d}{\theta^*} \\
H^*(G,R/p^{s+1}R) \arrow{r}{\iota_2} & H^*(G,R/pR) \arrow{r}{\delta_2} & H^*(G, pR/p^{s+1}R)
\end{tikzcd} 
\end{equation*}
where $\delta_1$ and $\delta_2$ are the connecting maps. The choice of $s$ ensures that $\pi^*$ and $\theta^*$ are injective in positive degrees; see Lemma~\ref{le:bounded-torsion}. Since $\pi^{\geqslant 1}$ is injective and the kernel of $\iota_2$ is nilpotent, by Lemma~\ref{le:F-onto}, so is the kernel of $\iota_1$ in positive degrees. This map factors through the map
\[
H^*(G,R)\otimes_R R/pR \lra H^*(G,R/pR)
\]
so the latter is one-to-one up to nilpotence.

Fix $x\in H^{\geqslant 1}(G,R/pR)$. Applying  Lemma~\ref{le:F-onto} to the map $R/p^sR\to R/pR$ yields that for some $n\ge 1$ the element $x^{p^{n}}$ is in the image of $\iota_2$.  So in the diagram above, we have $\delta_2(x^{p^n})=0$. Since $\theta^{\geqslant 1}$ is injective we have $\delta_1(x^{p^n})=0$. It follows from the exactness of the top row of the diagram that $x^{p^n}$ is in the image of  $\iota_1$. This implies that the map in the statement of the theorem is F-onto.
\end{proof}

Here is a consequence of the preceding theorem. 

\begin{corollary}
\label{co:f-iso-RG}
Let $G$ be a finite group and $R$ a  commutative noetherian  ring.  For any map of rings  $R\to k$ with $k$ a field of positive characteristic, the natural map
\[
\hh{G,R}\otimes_Rk \lra \hh{G,k}
\]
is an $F$-isomorphism, and hence the induced map on spectra 
\[
\Spec \hh{G,k} \lra \Spec (\hh{G,R}\otimes_Rk)
\]
is a  homeomorphism.
\end{corollary}

\begin{proof}
Let $p$ be the characteristic of $k$. The map $R\to k$ factors through $R/pR$. Applying $-\otimes_{R/pR}k$ to the F-isomorphism in Theorem~\ref{th:f-iso-RG} yields the F-isomorphism
\[
H^*(G,R)\otimes_R k \lra H^*(G,R/pR)\otimes_{R/pR} k\,.
\]
It remains to observe that the right hand side is isomorphic to $H^*(G,k)$.
\end{proof}

\begin{theorem}
\label{th:stratification-RG}
With $G$ a finite group and $R$ a regular ring, the tensor-triangulated category $\KProj {RG}$ is stratified by the action of $H^*(G,R)$.
\end{theorem}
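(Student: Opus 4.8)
The plan is to apply Theorem~\ref{th:homeo=stratification} with $A=RG$, so the real work is checking its hypotheses. The algebra $RG$ is free of rank $|G|$ over $R$, and together with the diagonal $g\mapsto g\otimes g$ it is a finite cocommutative Hopf $R$-algebra; regularity of $R$ is assumed. Its cohomology algebra $S\colonequals\Ext^*_{RG}(R,R)$ is the group cohomology ring $\hh{G,R}$, which is a finitely generated $R$-algebra by the classical theorem of Evens and Venkov. The only remaining point is that the comparison map $\kappa^a_\fp$ of \eqref{eq:fibre-map} is a bijection on spectra for every $\fp\in\Spec R$.

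Fix $\fp\in\Spec R$. Since $RG\otimes_R k(\fp)\cong k(\fp)G$, the map $\kappa_\fp\colon S\otimes_R k(\fp)\to S(\fp)=\hh{G,k(\fp)}$ is exactly the natural map \eqref{eq:spec-fg} attached to $R\to k(\fp)$. If $k(\fp)$ has positive characteristic, I would apply Theorem~\ref{th:f-iso-RG}: its hypothesis is satisfied because $R$ is regular, so each localisation $R_\fq$ is regular local and hence a domain; the conclusion is that $\kappa_\fp$ is an $F$-isomorphism and $\kappa^a_\fp$ is a homeomorphism, in particular a bijection. If instead $k(\fp)$ has characteristic zero, then $|G|$ is invertible in $k(\fp)$; since $|G|$ annihilates both $\HH{\geqslant 1}{G,R}$ and $\HH{\geqslant 1}{G,k(\fp)}$, both $S\otimes_R k(\fp)$ and $S(\fp)$ reduce to $k(\fp)$ concentrated in degree $0$, and $\kappa_\fp$ is an isomorphism, so $\kappa^a_\fp$ is again a bijection.

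With all the hypotheses in place, Theorem~\ref{th:homeo=stratification} gives that $\KProj{RG}$ is stratified by the action of $S=\hh{G,R}$ and that $\supp_S\KProj{RG}=\Spec S$, which is the assertion. I do not expect any real obstacle at this stage: the difficult inputs are the Benson--Habegger type $F$-isomorphism theorem (Theorem~\ref{th:f-iso-ZG}), the fibrewise criterion for tensor ideals (Theorem~\ref{th:lg-kproj-ha}), and the field case of stratification for $\KProj{k(\fp)G}$ from \cite{Benson/Iyengar/Krause/Pevtsova:2018a}, all of which are already available; the present argument is assembly together with the elementary treatment of residue characteristic zero.
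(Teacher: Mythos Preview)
Your argument is correct and follows the paper's proof exactly: invoke Evens--Venkov for finite generation of $\hh{G,R}$, use Theorem~\ref{th:f-iso-RG} to verify the bijectivity hypothesis on $\kappa^a_\fp$, and conclude via Theorem~\ref{th:homeo=stratification}. Your explicit treatment of the residue characteristic zero case and your remark that regular local rings are domains (so Theorem~\ref{th:f-iso-RG} applies) are details the paper leaves implicit, but the route is the same.
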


\begin{proof}
  The $R$-algebra $H^*(G,R)$ is finitely generated by a result of
  Evens \cite{Evens:1961a} and Venkov \cite{Venkov:1959a}. Thus the
   result follows from
  Theorem~\ref{th:homeo=stratification} and Corollary~\ref{co:f-iso-RG}.
\end{proof}

\begin{chunk}
  Theorem~\ref{th:stratification-RG} yields classifications of thick
  and localising tensor ideals which have recent predecessors.  The
  classification of thick tensor ideals of $\dbcat {RG}$ has been
  obtained by Lau~\cite{Lau:Bsp} from a geometric perspective
  involving Deligne--Mumford stacks. Using homotopy
  theoretic methods as well as the result of Lau,  Barthel
  \cite{Barthel:strat, Barthel:strat-regular} classifies  the localising tensor ideals of $RG$-modules that are projective as $R$-modules.
\end{chunk}

\begin{chunk}
  For a group algebra $RG$ there are other possible versions of a
  stable module category. For instance, one can endow the category of
  $RG$-modules with the exact structure given by those short exact
  sequences which are split exact when restricted to the trivial
  subgroup. Then the category $\rmod RG$ of finitely presented
  $RG$-modules is a Frobenius category, and the corresponding stable
  category $\stmod RG$ is in fact a tensor triangulated category
  \cite{Benson/Iyengar/Krause:2013a}. For general $R$ this category is
  different from $\uGproj RG$, as can be seen from the discussion in
  \cite[\S7]{Benson/Iyengar/Krause:2013a} for $R=\bbZ$.
\end{chunk}

\end{document}